\def\sphere{\mathbb{S}^{n-1}}
\def\Rn{{\mathbb R^n}}
\def\R{\mathbb{R}}
\newtheorem{theorem}{Theorem}[section]
\newtheorem{lemma}{Lemma}[section]
\newtheorem{remark}{Remark}[section]
\newtheorem{proposition}{Proposition}[section]
\newtheorem{corollary}{Corollary}[section]
\newtheorem{definition}{Definition}[section]
\def\bpf{\begin{proof}}
\def\epf{\end{proof}}
\def\be{\begin{equation}}
\def\ee{\end{equation}}
\def\bea{\begin{eqnarray}}
\def\eea{\end{eqnarray}}
\def\bt{\begin{theorem}}
\def\et{\end{theorem}}
\def\bl{\begin{lemma}}
\def\el{\end{lemma}}
\def\br{\begin{remark}}
\def\er{\end{remark}}
\def\bc{\begin{corollary}}
\def\ec{\end{corollary}}
\def\bd{\begin{definition}}
\def\ed{\end{definition}}
\def\bp{\begin{proposition}}
\def\ep{\end{proposition}}
\numberwithin{equation}{section}
\begin{document}
\title{The $L_q$ Minkowski problem for $\mathbf{p}$-harmonic measure
\footnote{Keywords: Minkowski problem, convex body, $\mathbf{p}$-harmonic measure, $\mathbf{p}$-Laplacian. These authors contributed equally: Hai Li, Longyu Wu. $^\ddagger$Corresponding author: email: bczhu@snnu.edu.cn}}

\author{Hai Li$^\dagger$, Longyu Wu$^\dagger$, Baocheng Zhu$^{\dagger,\ \ddagger}$}
\date{\quad}

\maketitle
\begin{abstract}
In this paper, we consider an extremal problem associated with the solution to a boundary value problem.
Our main focus is on establishing a variational formula for a functional related to the $\mathbf{p}$-harmonic measure,
from which a new measure is derived. This further motivates us to study the Minkowski problem for this new measure.
As a main result, we prove the existence of solutions to the $L_q$ Minkowski problem associated with the $\mathbf{p}$-harmonic
measure for $0<q<1$ and $1<\mathbf{p}\ne n+1$.

\vskip 2mm
2020 Mathematics Subject Classification: 31B05, 35J25, 42B37, 52A20, 52A40.
\end{abstract}

\section{Introduction}\label{sect:1}

The $L_q$ Minkowski problem is one of the most important contents in convex geometry.
It can be stated as: For any given $q\in \R$ and a finite nonzero Borel measure $\mu$
on the unit sphere $\sphere$ in $\R^n$, whether there exists a convex body whose $L_q$
surface area measure is the given measure $\mu$. When $q=1$, the $L_q$ Minkowski problem
reduces to the classical one, which dates back to the early works
by Minkowski and was developed further by Aleksandrov, Fenchel and Jessen.
The $L_q$ Minkowski problem for $q>1$ was first studied by Lutwak \cite{L93}. Since then,
this problem has received significant attention, leading to remarkable progress (see e.g., \cite{HS04, HZ05, LZ04, U03}).
When $q<1$, the problem is more challenging (see e.g., \cite{C06, CW06, DZ12, JZ16, LW13, Z15}).
Particularly for $q=0$,  it becomes the logarithmic Minkowski problem (see e.g.,
\cite{BZ13, CL22, LX24, S02, S03, TX23, Z14}). For more progress on the $L_q$ Minkowski problem,
we refer to \cite{CL20, HX15, M24} and the references therein. It is well known that
the solutions to the $L_q$ Minkowski problem are key ingredients in the rapidly developing
$L_q$ Brunn-Minkowski theory of convex bodies. For instance, they have played an important role
in establishing affine Sobolev inequalities (see e.g., \cite{CZ09, HS09, LZ02, Z99}).

Along with the rapid development of the Brunn-Minkowski theory, the Minkowski
problem has been greatly enriched. Examples include the Minkowski problem for
the dual curvature measure \cite{HZ16, LW20}, the Gaussian
surface area measure \cite{CZ23, FX23, HZ21}, the chord measure \cite{GZ24, LZ24++, XZ23},
and the Minkowski problem for unbounded closed convex sets \cite{LZ24+, S18, S24, YZ23},
as well as for log-concave functions \cite{CK15, FY22, R22}. These problems are well-known
for their close relationships among convex geometry, integral geometry,
differential geometry, and PDEs. Jerison systematically integrated the Brunn-Minkowski theory with
potential theory and the regularity theory of fully nonlinear equations. In his earlier works \cite{J89, J91},
he first studied the Minkowski problem for harmonic measure. Later, in another paper \cite{J96},
he examined a similar problem for electrostatic capacity. Jerison's contributions sparked significant research
into Minkowski problems. A notable example of ongoing research is the study of the Minkowski problem
for $\mathbf{p}$-capacity by Colesanti et al. \cite{CZ15}.
Recently, this problem has been extended to the $L_q$
case \cite{ZX20}. In fact, such kind of Minkowski problem
is closely related to a boundary value problem. More examples
of Minkowski problems associated with the boundary value problems
include those for capacity \cite{AV22, HZ18, LH23, X20, XX19} and for
torsional rigidity \cite{CF10, HZ23, LZ20}.

Let $K$ be a bounded convex domain with boundary $\partial K$ and $N$
be a neighborhood of $\partial K$. In this paper, we consider the
following boundary value problem
\begin{equation}\label{1.1}
\left\{
\begin{aligned}
&\text{div}\left({{\left|{\nabla u}\right|}^{\mathbf{p}-2}
\nabla u}\right)=0&&\text{in}\ K\cap N,\\
&u>0&&\text{in}\ K,\\
&u=0&&\text{on}\ \partial K.
\end{aligned}
\right.
\end{equation}
Here, $N$ is chosen so that the solution $u_K$ satisfies
$\left\|u_K\right\|_{L^\infty\left(\bar N\cap K\right)}
+\left\|\nabla u_K\right\|_{L^\infty\left(\bar N\cap K\right)}<\infty$
and $\left|{\nabla u_K}\right|\ne0$ in $K\cap N$,
where ${\left\|\cdot\right\|_{L^\infty}}$ is the ${L^\infty}$ norm, $\nabla$ is
the gradient operator and $\bar N$ is the closure of $N$.
Throughout this paper, we assume that $\partial N$ is of class $C^{\infty}$.
Let $W^{1,\mathbf{p}}$ denote the usual Sobolev space with $1<\mathbf{p}<\infty$.
Following Akman-Mukherjee \cite{AM24}, the $\mathbf{p}$-harmonic function
$u_K\in W^{1,\mathbf{p}}\left(K\cap N\right)$ can be used to define the measure
$\omega_\mathbf{p}
=\left|\nabla u_K\right|^{\mathbf{p}-1}
\mathcal{H}^{n-1}\llcorner_{\partial K}$.
Moreover, the $\mathbf{p}$-harmonic measure $\mu_K$ is defined by
$\mu_K=(g_K)_*\omega_\mathbf{p}$, that is,
\begin{equation}\label{1.2}
\mu_K\left(E\right)
=\int_{g_K^{-1}\left(E\right)}
{\left|\nabla u_K\right|}^{\mathbf{p}-1}
d{\mathcal H}^{n-1}
\end{equation}
for any Borel set $E$ on the unit sphere $\mathbb{S}^{n-1}$, where $g_K:\partial K\to\mathbb{S}^{n-1}$
is the Gauss map and $\mathcal{H}^{n-1}$ is the $(n-1)$-dimensional Hausdorff measure.

According to Akman-Mukherjee \cite{AM24}, the definition \eqref{1.2} is valid for any convex set,
and the $\mathbf{p}$-harmonic measure is of variation meaning. In fact, the $\mathbf{p}$-harmonic measure
has been studied by Lewis et al. \cite{L06, L13}, and Jerison's work \cite{J91} on harmonic measure has been
nontrivially extended to the $\mathbf{p}$-harmonic measure setting by Akman-Mukherjee \cite{AM24}.
By studying the discrete measure case and using the approximation arguments, Akman-Mukherjee \cite{AM24}
demonstrated the solvability of the Minkowski problem for $\mathbf{p}$-harmonic measure, provided that
the given measure is not concentrated on any great subsphere and its centroid is at the origin.
Recently, smooth solutions have been established by using the Gauss curvature flow \cite{LZ24}.
Detailed discussions on the relationships among the Minkowski problem for $\mathbf{p}$-harmonic measure,
harmonic measure \cite{J91}, and $\mathbf{p}$-capacitary measure \cite{CZ15} can be found on page 13 of \cite{AM24}.

In this paper, we focus on the following problem concerning the $\mathbf{p}$-harmonic measure,
where $1<\mathbf{p}<\infty$, unless specified otherwise.

\vskip.2cm

\textbf{$L_q$ Minkowski problem for $\mathbf{p}$-harmonic measure.}
{\it Let $q\in\mathbb{R}$ and $\mu$ be a finite Borel measure on $\mathbb{S}^{n-1}$.
What are the necessary and sufficient conditions for $\mu$ such that there exists
a convex body $\Omega$ satisfying
$\mu=h_{\Omega}^{1-q}\mu_\Omega$? Here $h_{\Omega}$ is the support function of $\Omega$.
}

\vskip.2cm

Actually, the measure $h_{\Omega}^{1-q}\mu_\Omega=\mu_{\Omega,q}$ in the above problem
can be derived from our new variational formula (see Theorem \ref{th:3.1} below),
and we call it the $L_q$ $\mathbf{p}$-harmonic measure. As mentioned above,
the $L_1$ Minkowski problem for $\mathbf{p}$-harmonic measure was recently studied
by Akman-Mukherjee \cite{AM24}. By studying an extremal problem for a functional
related to the $\mathbf{p}$-harmonic measure, we can obtain a solution to
the $L_q$ Minkowski problem for $\mathbf{p}$-harmonic measure for $0<q<1$.
This can be stated as main result of this paper as follows.

\begin{theorem}\label{th:1.1}
Let $0<q<1$, $1<\mathbf{p}\ne n+1$, and $\mu$ be a finite Borel measure on $\mathbb{S}^{n-1}$. If $\mu$ is not concentrated on any closed hemisphere,
there exists a convex body $\Omega$ containing the origin
in its interior so that $\mu=c\mu_{\Omega,q}$,
where $c$ is a positive explicit constant. In particular $c=1$,
if $\mathbf{p}\ne n+1-q$.
\end{theorem}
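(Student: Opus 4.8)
The plan is to solve the $L_q$ Minkowski problem by the variational method, realizing the desired convex body as the maximizer of a suitably normalized functional over convex bodies. Given the finite Borel measure $\mu$ on $\mathbb{S}^{n-1}$, not concentrated on any closed hemisphere, I would introduce the functional
\[
\Phi(\Omega)=\int_{\mathbb{S}^{n-1}}\log h_{\Omega}(u)\,d\mu(u)
\]
restricted to the class of convex bodies with a fixed value of the $\mathbf{p}$-harmonic capacitary-type functional (the functional $\mathcal{E}_{\mathbf{p}}(\Omega)$ whose variational derivative produces $\mu_{\Omega,q}$ via Theorem \ref{th:3.1}), and seek to maximize $\Phi$ over that constraint set. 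Concretely, I would work with the problem
\[
\sup\Big\{\,\Phi(\Omega)\ :\ \mathcal{F}(\Omega)=1\,\Big\},
\]
where $\mathcal{F}$ is chosen so that the Lagrange multiplier computation reproduces the equation $\mu=c\,\mu_{\Omega,q}$; here the homogeneity degree of $\mathcal{F}$ in dilations (which depends on $\mathbf{p}$ and $n$) is what pins down the constant $c$, and the condition $\mathbf{p}\ne n+1$ guarantees this degree is nonzero so the constraint is non-degenerate, while $\mathbf{p}\ne n+1-q$ is exactly the case where the scaling makes $c=1$ after renormalizing $\Omega$.

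The key steps, in order, are: (1) establish the variational formula of Theorem \ref{th:3.1}, i.e. that $\frac{d}{dt}\big|_{t=0}\mathcal{E}_{\mathbf{p}}([h_\Omega+tf]) = (\text{const})\int f\,h_\Omega^{-q}\,d\mu_\Omega$ for Wulff-shape perturbations, which is what identifies $\mu_{\Omega,q}=h_\Omega^{1-q}\mu_\Omega$ as the correct target measure; (2) prove the existence of a maximizer by the direct method --- take a maximizing sequence $\Omega_j$, use the no-hemisphere-concentration hypothesis on $\mu$ together with the constraint to show the $\Omega_j$ stay uniformly bounded and uniformly bounded away from degeneracy (this is the standard Aleksandrov-type argument: if the bodies were to become long and thin, $\Phi$ would tend to $-\infty$ or the constraint would be violated), then pass to a Blaschke-selection limit $\Omega$, using upper/lower semicontinuity of $\Phi$ and continuity of $\mathcal{F}$; (3) show the limiting body $\Omega$ contains the origin in its interior --- otherwise $h_\Omega$ vanishes somewhere and $\log h_\Omega$ is $-\infty$ on a set of positive $\mu$-measure, contradicting maximality (this is where $0<q<1$, rather than $q\le 0$, makes the support function genuinely positive and the argument clean); (4) derive the Euler--Lagrange equation by perturbing the maximizer within the constraint, using the variational formula from step (1) and absorbing the Lagrange multiplier into the constant $c$, then unwind the normalization to get $c=1$ when $\mathbf{p}\ne n+1-q$.

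I expect the main obstacle to be step (2), specifically the a priori bound on the maximizing sequence. Unlike the classical $L_q$ surface area measure, the functional $\mathcal{E}_{\mathbf{p}}$ is defined through the solution $u_K$ of the boundary value problem \eqref{1.1} on a neighborhood $N$ of $\partial K$, so its behavior under degeneration of $\Omega$ is governed by delicate estimates for the $\mathbf{p}$-Laplacian --- in particular one needs two-sided bounds relating $\mathcal{E}_{\mathbf{p}}(\Omega)$ to geometric quantities of $\Omega$ (inradius, circumradius, diameter) that are robust as $\Omega$ flattens. Establishing such bounds will require the regularity and comparison results for $\mathbf{p}$-harmonic functions near convex boundaries from Akman--Mukherjee \cite{AM24} and Lewis et al. \cite{L06, L13}, and showing that they degrade in a controlled (power-law) fashion; the exclusion $\mathbf{p}=n+1$ (and the special value $\mathbf{p}=n+1-q$) enters precisely through the scaling exponent in these estimates. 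A secondary technical point is verifying that the perturbed bodies in step (4) remain admissible for problem \eqref{1.1} (i.e. the neighborhood $N$ and the non-vanishing of $|\nabla u|$ persist under small Wulff perturbations), which should follow from stability of the hypotheses but needs to be checked.
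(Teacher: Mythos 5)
Your overall variational scheme (optimize a functional of $h_\Omega$ against $\mu$ subject to fixing the $\mathbf{p}$-harmonic functional $\Gamma$, then read off the Euler--Lagrange equation) is the right framework, and you correctly identify why $\mathbf{p}\ne n+1$ enters (the homogeneity of $\Gamma$). But there is a genuine error in the choice of objective functional, and a missing step in handling translations.

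The key problem is that you take $\Phi(\Omega)=\int_{\mathbb{S}^{n-1}}\log h_\Omega\,d\mu$. Differentiating this along a Wulff-shape perturbation, whether linear ($h_\Omega+tf$) or of $q$-sum type ($(h_\Omega^q+t f^q)^{1/q}$), produces a variation of the form $\int (f/h_\Omega)\,d\mu$ (respectively $\frac{1}{q}\int f^q h_\Omega^{-q}\,d\mu$), while Theorem \ref{th:3.1} gives $\frac{d}{dt}\big|_{t=0}\Gamma(K^t)=\frac{n-\mathbf{p}+1}{q}\int f^q h_\Omega^{1-q}\,d\mu_\Omega$. The resulting Lagrange condition is $h_\Omega^{-q}\,d\mu=c\,h_\Omega^{1-q}\,d\mu_\Omega$, that is $\mu=c\,h_\Omega\,\mu_\Omega=c\,\mu_{\Omega,0}$. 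The logarithmic functional thus solves the $L_0$ (logarithmic) problem, not the $L_q$ problem for $0<q<1$. To get $\mu=c\,h_\Omega^{1-q}\mu_\Omega$ one must optimize the $q$-th power functional $\int h_\Omega^q\,d\mu$ (up to a constant), so that its $q$-sum variation is $\int f^q\,d\mu$; pairing this with the $\Gamma$-constraint variation then gives exactly $\int f^q\,d\mu=c\int f^q\,d\mu_{\Omega,q}$. The paper implements precisely this, via $\Phi_Q(\zeta)=\int_{\mathbb{S}^{n-1}}(h_Q(\xi)-\langle\zeta,\xi\rangle)^q\,d\mu(\xi)$, and minimizes (not maximizes) $\sup_{\zeta\in Q}\Phi_Q(\zeta)$ over $\{\Gamma(Q)=\Gamma(B_2^n)\}$.

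The inner $\sup_\zeta$ is the second missing ingredient. Because $\mu_K$ has centroid at the origin (Lemma \ref{lem:2.3}), $\Gamma$ is translation invariant, so the constraint set is a union of translation orbits; but $\int h_\Omega^q\,d\mu$ is not translation invariant. One must therefore choose the correct representative in each orbit, and the paper does this by showing (Lemma \ref{lem:4.2}) that $\Phi_Q(\cdot)$ has a unique maximizer $\zeta(Q)$ in $\mathrm{int}\,Q$, translating so that $\zeta(K_j)=o$, and then using an implicit-function-theorem argument to make $\zeta(t)$ differentiable along the perturbation. Without this, the Euler--Lagrange computation picks up an uncontrolled linear term $\langle\zeta'(0),\int h_\Omega^{q-1}\xi\,d\mu\rangle$. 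Finally, to pass from the polarized identity $\int h_{\Omega_1}^q\,d\mu=c\int h_{\Omega_1}^q\,d\mu_{\Omega,q}$ (valid for convex test bodies $\Omega_1$) to the measure identity, one needs the approximation lemma (Lemma \ref{lem:4.1}) writing an arbitrary smooth $f>0$ as $f^q=h_L^q-h_{rB_2^n}^q$; your proposal does not mention this step. On the other hand, the a priori boundedness estimate you flag as the main obstacle is comparatively soft here: Lemma \ref{lem:2.1} gives the uniform bound $|\nabla u|\le M$, and together with the hypothesis that $\mu$ is not concentrated on a closed hemisphere the boundedness and non-degeneracy of the minimizing sequence follow from the standard Aleksandrov-type argument you sketch.
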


This paper is organized as follows. In Section \ref{sect:2}, we review some necessary notations
and background on convex sets, $\mathbf{p}$-harmonic functions and $\mathbf{p}$-harmonic measures.
In Section \ref{sect:3}, after establishing a variational formula associated with the
$\mathbf{p}$-harmonic measure, we further introduce the $L_q$ $\mathbf{p}$-harmonic
measure for $q\in\mathbb{R}$ and prove its weak convergence.
In Section \ref{sect:4}, we complete the proof of Theorem \ref{th:1.1}.

\section{Preliminaries}\label{sect:2}
\subsection{Background for convex sets}\label{subsect:2.1}

In this subsection, we collect the necessary background, notations and preliminaries.
More details on convex sets can be found in \cite{G06, G07, S14}.

Let $K\subset \mathbb{R}^{n}$ be a convex set with boundary $\partial K$,
one can define the multi-valued Gauss map $g_K:\partial K\to\mathbb{S}^{n-1}$ by

\begin{equation}\label{2.1}
{g_K}\left(x\right)
=\left\{{\xi\in {\mathbb{S}^{n-1}}:
\left\langle{y-x,\xi }\right\rangle< 0\ \text{for all}\ y\in K}\right\},
\end{equation}
i.e., the set of all unit outward normal vectors at $x\in \partial K$,
where $\left\langle{\cdot, \cdot}\right\rangle$ is the standard inner product
on $\mathbb{R}^{n}$.
The set defined in \eqref{2.1} is a singleton for
$\mathcal{H}^{n-1}$-a.e. $x\in\partial K$. For a measurable subset $E\subset\mathbb{S}^{n-1}$,
let $g_K^{-1}(E):=\{{x\in\partial K:g_K(x)\cap E\ne\emptyset}\}$ be the inverse
image of $g_K$, and ${\left(g_K\right)_*}$ be the push forward of $g_K$ given by
\[\left({{\left(g_K\right)}_*}\mu\right)\left(E\right)
=\mu\left({g_K^{-1}\left(E\right)}\right),\]
where $\mu$ is a measure defined on any measurable subsets of $\partial K$.
If $E$ is a Borel subset of $\mathbb{S}^{n-1}$, $g_K^{-1}\left(E\right)$
is $\mathcal{H}^{n-1}$-measurable.

For a compact convex set $K\subset\mathbb{R}^{n}$ and nonzero $x\in\mathbb{R}^{n}$,
the support function of $K$ is defined by
$h_K\left(x\right)
=\max\limits_{y\in K}\left\langle {x,y}\right\rangle$,
and the support hyperplane of $K$ is given by

$${H_K}(x)
=\left\{{y\in {\mathbb{R}^n}:\left\langle{x,y} \right\rangle
={h_K}(x)}\right\}.$$
If $K\cap{H_K}\left( x \right)$ consists of only a single point for all $x$, then $K$ is strictly convex.
In particular, a convex and compact subset in $\Rn$ with nonempty interior is called a convex body.

A convex set $K$ is said to be of class $C_+^2$ (resp. $C_ +^{2,\alpha }$
for $\alpha\in\left({0,1}\right]$)  if $\partial K$ is of class $C_+^2$
(resp. $C_+^{2,\alpha}$) and the Gauss map $g_K: \partial K\to\mathbb{S}^{n-1}$ is
a diffeomorphism. For any convex set $K$ of class $C_+^{2}$, we have
$K\cap {H_K}\left( {{g_K}\left( x \right)} \right)
=\left\{ x \right\}$, where $x\in \partial K$.
Moreover, the support function
is differentiable and
\[\nabla {h_K}\left( {{g_K}\left( x \right)} \right) = x,\]
where $\nabla $ is the gradient operator on $\mathbb{R}^{n}$.
For $\xi \in \mathbb{S}^{n-1}$, there exists an orthonormal basis
$\left\{ {{e^1}, \ldots ,{e^{n - 1}},\xi } \right\}$ of $\mathbb{R}^{n}$, where $\left\{ {e^i}\right\}$
spans the tangent space ${T_\xi }\left(\mathbb{S}^{n-1}\right)$. Then, for any $x\in \mathbb{R}^{n}$,
we have the decomposition
\begin{equation}\nonumber
x=\sum\limits_{i=1}^{n-1}x^ie^i
+\left\langle{x,\xi}\right\rangle\xi\ \ \text{with}\ \
x^i=\left\langle x,e^i\right\rangle.
\end{equation}
Let $\xi ={g_K}\left( x \right)$ for any $x\in\partial K$, then we have
\begin{equation}\label{2.2}
\nabla {h_K}\left(\xi\right)
=\sum\limits_{i = 1}^{n - 1} {{\bigtriangledown_i}{h_K}\left( \xi  \right){e^i}}
+\left\langle {\nabla {h_K}\left( \xi  \right),\xi } \right\rangle \xi,
\end{equation}
where
${\bigtriangledown _i}{h_K}\left(\xi\right)
=\left\langle {\nabla {h_K}\left(\xi\right),{e^i}}\right\rangle$.

Let $\mathcal{A}_+^{2,\alpha}$ be the set of all compact convex sets that are of class $C_+^{2,\alpha}$.
For a sequence of compact convex sets $\left\{\Omega_j\right\}_{j=0}^{\infty}$, we say that $\Omega_j$
converges to $\Omega_0$ and denote it as $\Omega_j\to \Omega_{0}$, if the Hausdorff distance $d_{\mathcal H}\left({\partial \Omega_j,\partial\Omega_0}\right)$ between
${\Omega_j}$ and $\Omega_{0}$ converges to $0$ as $j \to\infty$.
According to Theorem 2.46 of \cite{AM24}, for any compact convex set $\Omega$ with Gaussian
curvature $\kappa$, there exists a sequence
$\left\{\Omega_j \right\}_{j=1}^\infty\subset\mathcal{A}_+^{2,\alpha}$ with Gaussian curvature
$\kappa_{j}$ such that $\Omega_{j}\to \Omega$, and for any continuous function $f$ defined on the unit sphere $\mathbb{S}^{n-1}$,
\begin{equation}\nonumber
\int_{\mathbb{S}^{n-1}}\frac{f\left(\xi\right)}{\kappa_j\left({g_{\Omega_j}^{-1}\left(\xi\right)}\right)}d\xi
\to
\int_{\mathbb{S}^{n-1}}\frac{f\left(\xi\right)}
{\kappa\left({g_\Omega^{-1}\left(\xi\right)}\right)}d\xi,
\end{equation}
as $j\to\infty$.

Let $C\left(E\right)$ denote the set of all continuous functions
defined on subset $E\subset\mathbb{S}^{n-1}$ and let $C_{+}\left(E\right)\subset C\left(E\right)$ denote
the set of all strictly positive functions. The Wulff shape
$K_f$ associated with a nonnegative function $f\in C\left(E\right)$
is defined by
\begin{equation}\nonumber
{K_f}
=\left\{{x\in\mathbb{R}^{n}:\left\langle {x,\xi}\right\rangle
\le f\left(u\right)}\
\text{for all}\ \xi\in E\right\}.
\end{equation}
Let $\mathcal K_o^n$ be the set of convex bodies containing
the origin $o$ in their interiors.
A well-known fact is that $K_f\in\mathcal K_o^n$ if $f\in C_{+}\left(\mathbb{S}^{n-1}\right)$, and $h_{K_f}=f$
almost everywhere with respect to the surface area measure of $K_f$.
Schneider \cite{S14} proved that if $\{f_j\}_{j=1}^\infty\subset C_{+}\left(\mathbb{S}^{n-1}\right)$
converges to $f\in C_{+}\left(\mathbb{S}^{n-1}\right)$ uniformly as $j\to\infty$, then the sequence
$\{K_{f_j}\}$ is also convergent in the sense of the Hausdorff metric, i.e.,
\begin{equation}\label{2.3}
K_{f_j}\to K_f,\ \text{as}\ j\to\infty.
\end{equation}

\subsection{The $\mathbf{p}$-harmonic functions and
$\mathbf{p}$-harmonic measures}\label{subsect:2.2}

We now review some properties of the $\mathbf{p}$-harmonic function, which are also
referenced in \cite{AM24} for more details.

The $\mathbf{p}$-harmonic functions minimize the $\mathbf{p}$-Dirichlet energy
$\int_{K}{\left|{\nabla u}\right|}^\mathbf{p}dx$
and are weak solutions to the $\mathbf{p}$-Laplacian equation
$\Delta_\mathbf{p}u=\text{div}\left({{\left|{\nabla u}\right|}^{\mathbf{p}-2}
\nabla u}\right)=0$ in a convex domain $K$.
The existence of a weak solution $u_K\in W^{1,\mathbf{p}}\left(K\right)$ to $\Delta_\mathbf{p}u=0$
in $K$, with boundary condition $u=f$ on $\partial K$, is known. The uniqueness of the weak solution
follows directly from the comparison principle, while the regularity theory presents more complex challenges.
Let $K\in \mathcal{A}_+^{2,\alpha}$ and $f\in C^{1,\alpha}\left(\partial K\right)$,
it follows from \cite{L88} that
$u_K\in C^{1,\beta}\left(\bar{K}\right)$ for some $\beta(n,\mathbf{p},\alpha)\in(0,1)$.
Tolksdorf \cite{T84} has proved that the weak solutions to $\Delta_\mathbf{p}u = 0$ in $K$
are locally $C^{1,\beta}$ for some $\beta(n,\mathbf{p})\in \left(0,1\right)$.
This shows that for any compact subset $K^\prime\subset \subset K$,
the weak solutions are continuously differentiable on $K^\prime$ and their first derivatives are H\"older continuous.
Hence, the weak solution $u$ to \eqref{1.1} belongs to $C^{1,\beta}(\bar K\cap N)$.
Since $\left|{\nabla u}\right|\ne0$ in $K\cap N$, the $\mathbf{p}$-Laplacian operator
is uniformly elliptic in $K\cap N$. It follows from the boundary Schauder estimates \cite{GT01}
that the Hessian matrix $D^{2}u$ is well-defined on $\partial K$.
Let $u_{K_j}$ be the weak solution to \eqref{1.1} for $K_j$. Then, by Proposition 3.65 of \cite{AM24}, $\nabla u_{K_j}\to\nabla u_K$ uniformly in $N$, if $K_j\to K$.

For the $\mathbf{p}$-harmonic function, we provide two important lemmas. The first one can be stated as follows.

\begin{lemma}\label{lem:2.1}
Let $K$ be a bounded convex domain containing the origin and $u$ be the solution to \eqref{1.1},
there exists a constant $M>0$, independent of $K$, such that
\begin{equation}\nonumber
\left|{\nabla u}\right|\le M\ \mathrm{on}\ \partial K.
\end{equation}
\end{lemma}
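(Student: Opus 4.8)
The plan is to obtain the bound by comparing $u$ with a suitable explicit $\mathbf{p}$-harmonic barrier and invoking interior gradient estimates, using only data that is uniform over the admissible class of convex domains. First I would exploit the two structural constraints that single out the admissible bodies: the normalization $\|u_K\|_{L^\infty(\bar N\cap K)}+\|\nabla u_K\|_{L^\infty(\bar N\cap K)}<\infty$ and, more importantly, the geometric fact that $K$ is convex and contains the origin, hence is contained in a fixed large ball $B_R$ (here one needs that the class of competitors in the later extremal problem is a priori bounded, which is where boundedness ``independent of $K$'' comes from) and has a supporting hyperplane at each boundary point. The outward normal derivative $|\nabla u|$ on $\partial K$ equals $\partial u/\partial\nu$ since $u=0$ on $\partial K$ and $u>0$ inside, so it suffices to bound this normal derivative from above.

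The key step is a barrier argument at an arbitrary point $x_0\in\partial K$. Let $\nu$ be the inner normal at $x_0$ and let $H$ be the supporting hyperplane there; by convexity $K$ lies on one side of $H$. On the relevant neighborhood $N$ the function $u$ is bounded by some constant $A$ (from the $L^\infty$ normalization, which I would argue is uniform because $u$ is a subsolution comparable to the capacitary-type potential of $B_R$ relative to a fixed outer region). Consider the half-space $\{ \langle x - x_0,\nu\rangle > 0\}$ and the linear function $w(x) = \lambda \langle x - x_0, \nu\rangle$, which is $\mathbf{p}$-harmonic for every $\lambda$. Choosing $\lambda$ large enough, depending only on $A$ and on a fixed lower bound for the thickness of the collar $N\cap K$ measured along $\nu$, one gets $w\ge u$ on the part of $\partial(K\cap N)$ that lies inside $K$, while $w\ge 0 = u$ on the piece of $\partial K$; the comparison principle for the $\mathbf{p}$-Laplacian then yields $u\le w$ throughout $K\cap N$ near $x_0$. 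Since $u(x_0)=w(x_0)=0$, differentiating along $-\nu$ gives $|\nabla u(x_0)| = \partial u/\partial\nu (x_0) \le \partial w/\partial\nu(x_0) = \lambda$. As $\lambda$ was chosen independently of $K$ and of $x_0$, this is the desired $M$.

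The main obstacle is making the two uniformities genuinely uniform in $K$: one must know that there is a fixed outer neighborhood geometry (a uniform inner-collar width for $N\cap K$) and a fixed $L^\infty$ bound $A$ for $u_K$, valid for every admissible $K$. I expect this to follow from the standing assumptions on $N$ (namely $\partial N$ of class $C^\infty$ and the boundedness condition imposed right after \eqref{1.1}) together with the a priori boundedness of the competitor class, but it should be spelled out; alternatively, if the paper only needs the bound along a convergent sequence $K_j\to K$, then $\|\nabla u_{K_j}\|_{L^\infty}$ is controlled via the uniform convergence $\nabla u_{K_j}\to\nabla u_K$ recorded above (Proposition 3.65 of \cite{AM24}), which already gives a uniform bound on any such sequence. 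A secondary, more technical point is justifying the normal-derivative comparison at a merely-convex boundary point: this is handled by the fact that $u$ extends to $C^{1,\beta}(\bar K\cap N)$ once $|\nabla u|\ne 0$, so the one-sided derivative along $\nu$ is well defined and the barrier inequality passes to the limit $x\to x_0$.
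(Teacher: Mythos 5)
Your proposal is a genuinely different route from the paper's. You use a direct linear barrier at a boundary point of $\partial K$: since $K$ is convex, the supporting hyperplane at $x_0$ gives the affine $\mathbf{p}$-harmonic comparison function $w(x)=\lambda\langle x-x_0,\nu\rangle$ with $w\ge 0=u$ on $\partial K$, and with $\lambda$ large enough so that $w\ge u$ on the inner boundary $\partial N\cap K$; the comparison principle and the vanishing tangential derivative on $\partial K$ then yield $|\nabla u(x_0)|\le\lambda$. The paper instead reduces to the capacity-function normalization $u\equiv 1$ on $\bar\Omega_0=K\setminus N$, invokes Colesanti--Salani (Theorem 4 and Proposition 1 of \cite{CS03}) to show via the level-set/support-function relation \eqref{2.5} that $|\nabla u|$ attains its maximum on the \emph{inner} boundary $\partial\bar\Omega_0$, then compares there with the capacitary potential of an inscribed ball $B_r\subset\bar\Omega_0$ to get an explicit bound $m=m(r,n)$, and finally transfers the bound to $\partial K$ using the $C^{1,\beta}$ regularity of $u$ up to the boundary. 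Your approach is more elementary (no level-set convexity, no explicit capacitary barriers) and bounds $|\nabla u|$ at the correct boundary directly; the paper's approach buys a stronger structural fact (monotonicity of $|\nabla u|$ along the flow of level sets) but then proceeds somewhat indirectly with the H\"older transfer.

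Two points deserve emphasis, both of which you already flag. First, the uniformity of the collar thickness: your barrier requires $\inf_{x\in\partial N\cap K}\langle x-x_0,\nu\rangle\ge d>0$ for a $d$ independent of $K$ and $x_0$. The inequality $\langle x-x_0,\nu\rangle\ge\operatorname{dist}(x,\partial K)$ (valid since $\langle x-x_0,\nu\rangle$ is the distance from $x$ to the supporting hyperplane, which is at least the distance to $\partial K$) reduces this to a uniform lower bound on $\operatorname{dist}(\partial N\cap K,\partial K)$. In the paper this is left implicit; in your proof it must be stated as a hypothesis or derived from the constraint that the competitor bodies are uniformly bounded and contain a fixed neighborhood of the origin. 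The paper's inscribed-ball radius $r$ plays the analogous role and is subject to the same caveat, so neither version fully discharges this. Second, the uniform $L^\infty$ bound $A$: the paper sidesteps it by normalizing $u\equiv 1$ on $\bar\Omega_0$, turning $u$ into a $\mathbf{p}$-capacity potential with $0\le u\le 1$; your argument would become fully rigorous if you simply adopt the same normalization, giving $A=1$. With those two points spelled out, your barrier argument is correct and self-contained. Your fallback remark --- that along a convergent sequence $K_j\to K$ the bound follows from the uniform convergence $\nabla u_{K_j}\to\nabla u_K$ (Proposition 3.65 of \cite{AM24}) --- is also accurate and in fact suffices for the application in the proof of Theorem \ref{th:1.1}.
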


\begin{proof}
By Theorem 2.46 of \cite{AM24}, for any convex domain $K$, there exists a  sequence of convex domains $\{K_j\}\subset\mathcal{A}_+^{2,\alpha}$
that converges to $K$ as $j\to\infty$. Thus, we only need to consider the case that $K\in \mathcal{A}_+^{2,\alpha}$.

Let $u$ be a solution to the boundary value problem
\begin{equation}\label{2.4}
\left\{
\begin{aligned}
&\text{div}\left({{\left|{\nabla u}\right|}^{\mathbf{p}-2}
\nabla u}\right)=0&&\text{in}\ K\setminus \bar\Omega_0,\\
&u>0&&\text{in}\ K,\\
&u=0&&\text{on}\ \partial K,\\
\end{aligned}
\right.
\end{equation}
where $\bar\Omega_0:=K\setminus N$. If $u=1$ in $\bar\Omega_0$,
it follows from page 204 of \cite{L77} that $u$ is a $\mathbf{p}$-capacity function of
$K\setminus\bar\Omega_0$. By Theorem 2 of \cite{CS03}, we conclude that
$u\in C^{\infty}\left(K \setminus \bar\Omega_0\right)
\cap C\left(K \setminus \Omega_0\right)$, $0<u<1$ in $K \setminus\bar\Omega_0$
and $K_s= \left\{ {x\in K:u(x)\ge s} \right\}$ is convex for $0\le s\le 1$.

Since $\left| {\nabla u\left( x \right)} \right| > 0$ in $K \setminus \bar \Omega_0$,
by Theorem 4 of \cite{CS03}, we obtain
\begin{equation}\label{2.5}
- \frac{{\partial {h_{{K_s}}}\left( {{{-\nabla u\left( x \right)} \mathord{\left/
{\vphantom {{\nabla u\left( x \right)} {\left| {\nabla u\left( x \right)} \right|}}} \right.
\kern-\nulldelimiterspace} {\left| {\nabla u\left( x \right)} \right|}}} \right)}}{{\partial s}}
=\frac{1}{{\left| {\nabla u\left( x \right)} \right|}},
\end{equation}
for all $x\in\partial K_s$. By applying Proposition 1 of \cite{CS03}, we further have
\[\frac{{\partial {h^2_{{K_s}}}\left( {{{-\nabla u\left( x \right)} \mathord{\left/
{\vphantom {{\nabla u\left( x \right)} {\left| {\nabla u\left( x \right)} \right|}}} \right.
\kern-\nulldelimiterspace} {\left| {\nabla u\left( x \right)} \right|}}} \right)}}{{\partial s^{2}}} \ge 0,\]
thus $\frac{{\partial {h_{{K_s}}}\left( {{{-\nabla u\left( x \right)} \mathord{\left/
{\vphantom {{\nabla u\left( x \right)} {\left| {\nabla u\left( x \right)} \right|}}} \right.
\kern-\nulldelimiterspace} {\left| {\nabla u\left( x \right)} \right|}}} \right)}}{{\partial s}} $
is non-decreasing for every fixed $x$. This, together with \eqref{2.5}, shows that
$\left|{\nabla u\left(x\right)}\right|$
attains its maximum on $\partial\bar\Omega_0$.
Let $B_r$ be a ball with radius $r$ included in $\bar\Omega_0$ and
internally tangent to $\partial\bar\Omega_0$ at $x\in\partial\bar\Omega_0$,
and let $v$ be a solution to the equation \eqref{2.4} with $\bar \Omega_0$ replaced by $B_r$.
As $B_{r}\subset \bar \Omega_0$, we have $K \setminus \bar \Omega_0 \subset K \setminus B_{r}$,
thus
\begin{equation*}
\left\{
\begin{aligned}
&\Delta_{\mathbf{p}}u= \Delta _{\mathbf{p}}v && \text{in}\ K \setminus\bar\Omega_0,\\
&u=v=0\ &&\text{on}\ \partial K,\\
&v\le u\ &&\text{on}\ \partial \Omega_0.\\
\end{aligned}
\right.
\end{equation*}
Then, by the comparison principle (cf. Theorem 2.1 of \cite{G13}), $v\le u$ on
$K \setminus\bar\Omega_0$. This, combined with $u(x)=v(x)$, implies that
$\left|{\nabla u\left(x\right)}\right|\le\left|{\nabla v\left(x\right)} \right|$
for $x\in\partial\bar\Omega_0$.
Then, we can calculate the value of $\left|{\nabla v\left(x\right)}\right|$ and obtain a positive constant $m$
depending on $r$ and $n$ such that
\begin{equation}\label{2.6}
\left|{\nabla u}\right|\le m
\end{equation}
in $K\setminus\bar\Omega_0$.

Moreover, since $u\in C^{1,\beta}\left({\bar K\cap N}\right)$
with $\beta=\beta(n,\mathbf{p},\alpha)$, it follows that $\nabla u$ is $\beta$-H\"older continuous.
Then, there exists a constant $\Lambda>0$ such that
$$
\left| {\nabla u\left( y \right)-\nabla u\left(z\right)}\right|\le \Lambda{\left|{y-z} \right|^\beta}
$$
for $y,z\in {\bar K\cap N}$. Thus, we have
\[\left| {\nabla u\left(z \right)} \right| \le \Lambda {\left| {y-z} \right|^\beta } + \left| {\nabla u\left(y\right)} \right|\]
for any $z\in \partial K$ and $y\in K\cap N$.
This, together with \eqref{2.6} and the boundedness of ${\bar K\cap N}$, shows that
there exists a finite positive constant $M$, independent of $K$, such that
$$|{\nabla u\left(z\right)}|\le M$$ for all $z\in\partial K$.
This completes the proof of Lemma \ref{lem:2.1}.
\end{proof}

The second order covariant derivative of $h_K:\mathbb{S}^{n-1}\to\mathbb{R}$ is locally given by
$${\bigtriangledown}^2{h_K}
=\sum\limits_{{i,j= 1}}^{n-1}(\bigtriangledown_{i,j}h_K)
e^i\otimes e^j,$$
where $\bigtriangledown_{i,j}h_K(x)=\partial_{i,j}(h_K\circ\varphi^{-1})(\varphi(x))$ with $U\subset\mathbb{S}^{n-1}$
and $\varphi:U\to V\subset\mathbb{R}^{n-1}$ being a coordinate chart.
Let $\mathbb{I}$ be the unit matrix of order $(n-1)$ and $C[\bigtriangledown^2h_K+h_K\mathbb{I}]$ be the cofactor matrix of
$\left({{\bigtriangledown^2}{h_K}+{h_K}{\mathbb{I}}}\right)$ with element
${C_{i,j}}\left[\cdot\right]
=\left\langle{C\left[\cdot\right]{e^j},{e^i}}\right\rangle$.
The following lemma directly follows from Lemma 3.44 of \cite{AM24}.

\begin{lemma}\label{lem:2.2}
Let $\left\{{e^1,\ldots,e^{n-1},\xi}\right\}$ be an orthonormal basis of $\mathbb{R}^{n}$,
and let $u$ be the solution to \eqref{1.1} for a convex domain $K$ that is of class
$C_+^{2,\alpha}$. Then we have
\begin{enumerate}[label=\upshape(\roman*)]
\item 	$\left\langle {{D^2}u\left( {\nabla {h_K}\left( \xi  \right)} \right)\xi ,\xi } \right\rangle
= \frac{1}{{\mathbf{p}-1}}\kappa \left( {\nabla {h_K}\left( \xi  \right)} \right)\left| {\nabla u\left( {\nabla {h_K}
\left( \xi  \right)} \right)} \right|{\rm{Tr}}\left( {C\left[ {{\bigtriangledown ^2}{h_K} + {h_K}{\mathbb{I}}} \right]} \right)$,
\item $\left\langle {{D^2}u\left( {\nabla {h_K}\left( \xi  \right)} \right){e^i},\xi } \right\rangle
=-\kappa \left( {\nabla {h_K}\left( \xi  \right)} \right)
\sum\limits_{j = 1}^{n - 1} {{C_{i,j}}\left[ {{\bigtriangledown ^2}{h_K} + {h_K}{\mathbb{I}}} \right]} {\bigtriangledown _j}
\left( {\left| {\nabla u\left( {\nabla {h_K}\left( \xi  \right)} \right)} \right|} \right)$.
\end{enumerate}
\end{lemma}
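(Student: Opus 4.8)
The plan is to derive both identities by differentiating, tangentially along $\partial K$, the elementary fact that on the level set $\partial K=\{u=0\}$ the gradient $\nabla u$ points along the normal, and then to recover the missing normal--normal component of $D^2u$ from the $\mathbf{p}$-Laplace equation. I would first fix the parametrization of $\partial K$ by the reverse Gauss map $\xi\mapsto x=\nabla h_K(\xi)$, $\xi\in\mathbb{S}^{n-1}$, which is legitimate because $K\in\mathcal{A}_+^{2,\alpha}$ makes $g_K$ a diffeomorphism; by the boundary Schauder estimates recalled above, $u\in C^{1,\beta}(\bar K\cap N)$, the Hessian $D^2u$ extends continuously to $\partial K$, and $|\nabla u|\ne 0$ there, so $\rho(\xi):=|\nabla u(\nabla h_K(\xi))|$ is a well-defined positive function and all the quantities below make sense. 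Since $u=0$ on $\partial K$ and $u>0$ inside, $\nabla u$ at $x\in\partial K$ is a negative multiple of the outer normal, whence
\[
F(\xi):=\nabla u(\nabla h_K(\xi))=-\rho(\xi)\,\xi .
\]
Writing $A:=\bigtriangledown^2 h_K+h_K\mathbb{I}$, which is symmetric positive definite with $\det A=1/\kappa$, I would differentiate $F$ in the direction $e^i$ of a frame chosen geodesic at $\xi$: on $\mathbb{S}^{n-1}$ one has $\bigtriangledown_i\xi=e^i$, and from \eqref{2.2} together with $\langle\nabla h_K(\xi),\xi\rangle=h_K(\xi)$ and the second fundamental form of $\mathbb{S}^{n-1}$ one obtains $\bigtriangledown_i(\nabla h_K(\xi))=\sum_j A_{ij}e^j$. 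The chain rule then gives $\bigtriangledown_iF=D^2u(x)\sum_j A_{ij}e^j$, while differentiating $-\rho\xi$ gives $\bigtriangledown_iF=-(\bigtriangledown_i\rho)\,\xi-\rho\,e^i$, and equating yields the vector identity
\[
D^2u(x)\sum_{j=1}^{n-1}A_{ij}\,e^j=-(\bigtriangledown_i\rho)\,\xi-\rho\,e^i .
\]

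From here both assertions follow by pairing this identity with $\xi$ and with $e^k$. Pairing with $\xi$ gives $\sum_j A_{ij}\langle D^2u(x)e^j,\xi\rangle=-\bigtriangledown_i\rho$; inverting $A$ through $A^{-1}=\kappa\,C[A]$ (valid because $A$ is symmetric and $\det A=1/\kappa$) and using the symmetry of $C[A]$ produces exactly (ii). Pairing with $e^k$ gives $\sum_j A_{ij}\langle D^2u(x)e^j,e^k\rangle=-\rho\,\delta_{ik}$, i.e. the tangential Hessian $M=(\langle D^2u(x)e^i,e^j\rangle)_{i,j}$ satisfies $AM=-\rho\,\mathbb{I}$, hence $M=-\rho\kappa\,C[A]$ and $\mathrm{Tr}\,M=-\rho\kappa\,\mathrm{Tr}(C[A])$. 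To determine $\langle D^2u(x)\xi,\xi\rangle$ I would then use the equation itself: expanding $\mathrm{div}(|\nabla u|^{\mathbf{p}-2}\nabla u)=0$ gives $\Delta u+(\mathbf{p}-2)|\nabla u|^{-2}\langle D^2u\,\nabla u,\nabla u\rangle=0$, which on $\partial K$, where $\nabla u=-\rho\xi$, reduces to $\Delta u+(\mathbf{p}-2)\langle D^2u(x)\xi,\xi\rangle=0$; since $\Delta u=\mathrm{Tr}\,M+\langle D^2u(x)\xi,\xi\rangle$ in the orthonormal basis $\{e^1,\dots,e^{n-1},\xi\}$, this rearranges to $(\mathbf{p}-1)\langle D^2u(x)\xi,\xi\rangle=-\mathrm{Tr}\,M=\rho\kappa\,\mathrm{Tr}(C[A])$, which is (i).

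The pairings, the matrix inversion and the expansion of the $\mathbf{p}$-Laplacian are routine. The step I expect to be delicate is the tangential differentiation producing the vector identity above: correctly computing $\bigtriangledown_i(\nabla h_K(\xi))$, the derivative of the reverse Gauss map --- where the curvature of $\mathbb{S}^{n-1}$ enters and the $\xi$-components must cancel against one another --- and justifying that $\rho$ is differentiable and $D^2u$ continuous up to $\partial K$, so that the chain rule is valid there. Keeping the bookkeeping between the ambient Hessian $D^2u$ at $x=\nabla h_K(\xi)$ and the intrinsic quantities on the sphere consistent is precisely what is carried out in Lemma 3.44 of \cite{AM24}, from which this statement is quoted.
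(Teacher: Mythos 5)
Your proof is correct. The paper gives no proof of this lemma at all --- it simply states that it ``directly follows from Lemma 3.44 of [AM24]'' --- so there is no argument in the paper to compare against; what you have done is reconstruct, accurately, the argument that the cited source must contain. The key steps all check out: the reverse Weingarten relation $\bigtriangledown_i(\nabla h_K)(\xi)=\sum_j A_{ij}e^j$ with $A={\bigtriangledown^2}h_K+h_K\mathbb{I}$ (where the $\xi$-components indeed cancel when you differentiate \eqref{2.2} in a geodesic frame using $\bar\nabla_{e^i}\xi=e^i$ and $\bar\nabla_{e^i}e^j=-\delta_{ij}\xi$); the vector identity $D^2u\sum_j A_{ij}e^j=-(\bigtriangledown_i\rho)\xi-\rho e^i$ obtained by equating this with the derivative of $F=-\rho\xi$; the pairing with $\xi$ followed by inversion $A^{-1}=\kappa\,C[A]$ (valid since $A$ is symmetric with $\det A=1/\kappa$) giving (ii); the pairing with $e^k$ giving $\mathrm{Tr}\,M=-\rho\kappa\,\mathrm{Tr}(C[A])$ for the tangential Hessian $M$; and finally the expanded $\mathbf{p}$-Laplace equation $\Delta u+(\mathbf{p}-2)|\nabla u|^{-2}\langle D^2u\,\nabla u,\nabla u\rangle=0$, which on $\partial K$ reduces to $(\mathbf{p}-1)\langle D^2u\,\xi,\xi\rangle=-\mathrm{Tr}\,M$, yielding (i). The one point you correctly flag --- that $D^2u$ extends continuously to $\partial K$ so that both the chain rule and the pointwise expansion of $\operatorname{div}(|\nabla u|^{\mathbf{p}-2}\nabla u)$ are valid there --- is supplied by the boundary Schauder estimates already recalled in Section~\ref{subsect:2.2} of the paper, which apply because $|\nabla u|\neq 0$ near $\partial K$ makes the operator uniformly elliptic there. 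No gaps.
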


At the end of this subsection, we review the weak convergence of the  $\mathbf{p}$-harmonic measure.
Let $u\in W^{1,\mathbf{p}}\left(K\cap N\right)$ be a $\mathbf{p}$-harmonic function, a solution to \eqref{1.1} in $K\cap N$.
Following Akman-Mukherjee \cite{AM24}, one can define
the $\mathbf{p}$-harmonic measure
\begin{equation}\nonumber
{\mu_{\Bar K}}\left(E\right)
={\mu_K}\left(E\right)
=\int_{g_K^{-1}\left(E\right)}{\left|
{\nabla u\left(x\right)} \right|}^{\mathbf{p}-1}d{\mathcal{H}}^{n-1}\left(x\right),
\end{equation}
where $E\subset\mathbb{S}^{n-1}$ is a Borel subset.
If $K\in \mathcal{A}_+^{2,\alpha }$, we have $\nabla h_K\left(\xi\right)=g_K^{-1}\left(\xi\right)$,
and we can use the transformation rule of
the Jacobian (cf. page 8 of \cite{AM24}) to obtain
\begin{equation}\label{2.7}
(g_K)_*\mathcal{H}^{n- 1}\llcorner_{\partial K}
=|\det\left({\bigtriangledown}^2h_K+h_K{\mathbb{I}}\right)|
\mathcal{H}^{n- 1}\llcorner_{\mathbb{S}^{n-1}}
=\frac{1}{\left(\kappa\circ g_K^{-1}\right)}
\mathcal{H}^{n- 1}\llcorner_{\mathbb{S}^{n-1}}.
\end{equation}
Therefore,
\begin{equation}\nonumber
\begin{split}
d{\mu _K} = {\left| {\nabla u\left( {\nabla {h_K}\left(\xi\right)} \right)} \right|^{\mathbf{p}-1}}d{\mathcal{H}^{n - 1}} \llcorner_{\partial K}={\left| {\nabla u\left( \nabla {h_{{K}}}\left(\xi\right) \right)}
\right|^{\mathbf{p}-1}}\det \left( {{\bigtriangledown ^2}{h_K} + {h_K}\mathbb{I}} \right)d\xi.
\end{split}
\end{equation}
For a compact convex set $K$ and a sequence of compact convex sets $\left\{K_{j}\right\}$ with $K_{j}\to K$ as $j\to\infty$,
Akman-Mukherjee \cite{AM24} proved that
\begin{equation}\label{2.8}
\mathop{\lim}\limits_{j\to\infty }
\int_{\mathbb{S}^{n-1}}{f\left(\xi\right)}d\mu_{K_j}\left(\xi\right)
=\int_{\mathbb{S}^{n-1}}{f\left(\xi\right)} d{\mu_K}\left(\xi\right)
\end{equation}
for any $f \in C\left(\mathbb{S}^{n-1}\right)$. This shows that the $\mathbf{p}$-harmonic measure is weakly convergent.
Moreover, it can be checked that the centroid of the $\mathbf{p}$-harmonic measure is at the origin.

\begin{lemma}\label{lem:2.3}
Let $K$ be a bounded convex domain, then for any $x_0\in \mathbb{R}^{n}$,
$$
\int_{{\mathbb{S}^{n - 1}}} {\left\langle {{x_0},\xi } \right\rangle } d{\mu_K}(\xi)=0.
$$
\end{lemma}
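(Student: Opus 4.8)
The plan is to reduce to the smooth case and then use the divergence structure of the $\mathbf{p}$-Laplace equation. By Theorem 2.46 of \cite{AM24} and the weak convergence \eqref{2.8} of the $\mathbf{p}$-harmonic measure, it suffices to prove the identity when $K\in\mathcal{A}_+^{2,\alpha}$; indeed, if $K_j\to K$ with $K_j\in\mathcal{A}_+^{2,\alpha}$, then applying \eqref{2.8} to the continuous function $f(\xi)=\langle x_0,\xi\rangle$ transfers the vanishing integral from $K_j$ to $K$. So assume $K$ is of class $C_+^{2,\alpha}$ and let $u=u_K$ be the solution to \eqref{1.1}. Unwinding the definition \eqref{1.2} via the Gauss map, the claim becomes
\[
\int_{\partial K}\left\langle x_0, g_K(x)\right\rangle\,\left|\nabla u(x)\right|^{\mathbf{p}-1}\,d\mathcal{H}^{n-1}(x)=0 .
\]
Since $u>0$ in $K$ and $u=0$ on $\partial K$, the outward normal $g_K(x)$ is antiparallel to $\nabla u(x)$ on $\partial K$, i.e. $g_K(x)=-\nabla u(x)/|\nabla u(x)|$. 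Hence the integrand equals $-\left\langle x_0,\nabla u(x)\right\rangle\,|\nabla u(x)|^{\mathbf{p}-2}$, and the integral can be rewritten as a flux:
\[
\int_{\partial K}\left\langle x_0, g_K(x)\right\rangle\left|\nabla u(x)\right|^{\mathbf{p}-1}d\mathcal{H}^{n-1}
=-\int_{\partial K}\left\langle x_0\,|\nabla u|^{\mathbf{p}-2}\nabla u,\ \nu\right\rangle d\mathcal{H}^{n-1},
\]
where $\nu$ is the outer unit normal and the minus sign is absorbed because $\langle\nabla u,\nu\rangle=-|\nabla u|$.

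Next I would apply the divergence theorem on the shell $K\cap N$ (equivalently on $K\setminus\bar\Omega_0$ with $\bar\Omega_0=K\setminus N$), whose boundary is $\partial K\cup\partial\Omega_0$. The vector field $V=x_0\,|\nabla u|^{\mathbf{p}-2}\nabla u$ is $C^1$ there because $u\in C^{1,\beta}(\bar K\cap N)$ with $|\nabla u|\neq0$, so
\[
\operatorname{div}V=\left\langle x_0,\ \operatorname{div}\!\left(|\nabla u|^{\mathbf{p}-2}\nabla u\right)\right\rangle=\left\langle x_0,\ \Delta_{\mathbf{p}}u\right\rangle=0
\]
identically in $K\cap N$, since $x_0$ is a constant vector. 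Therefore the total flux of $V$ out of $K\cap N$ vanishes, which gives
\[
\int_{\partial K}\left\langle V,\nu\right\rangle d\mathcal{H}^{n-1}
=-\int_{\partial\Omega_0}\left\langle V,\nu_{\Omega_0}\right\rangle d\mathcal{H}^{n-1}.
\]
It remains to show the flux across $\partial\Omega_0$ is itself zero. For this I would use that $u$ also solves $\Delta_{\mathbf{p}}u=0$ in the interior region $\Omega_0=\operatorname{int}(K\setminus N)$ with (after the normalization $u\equiv 1$ on $\bar\Omega_0$, as in the proof of Lemma \ref{lem:2.1}) no boundary flux — or, more directly, observe that on $\partial\Omega_0$ one can again invoke the divergence theorem on $\Omega_0$ itself: $\operatorname{div}V=\langle x_0,\Delta_{\mathbf{p}}u\rangle=0$ there as well, so $\int_{\partial\Omega_0}\langle V,\nu\rangle\,d\mathcal{H}^{n-1}=0$. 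Combining the two identities yields $\int_{\partial K}\langle V,\nu\rangle\,d\mathcal{H}^{n-1}=0$, which is exactly the desired vanishing.

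The main obstacle is handling the regularity and the inner boundary carefully. The field $V$ is only as smooth as $|\nabla u|^{\mathbf{p}-2}\nabla u$; near $\partial K$ we have $u\in C^{1,\beta}$ with $|\nabla u|$ bounded away from $0$ (by the hypotheses on $N$ and Lemma \ref{lem:2.1}), so $V\in C^{0,\beta}$ and the boundary Schauder estimates cited in the text give $D^2u$ well-defined on $\partial K$, making $\operatorname{div}V$ integrable and the divergence theorem legitimate; but one should either work on a slightly shrunken shell and pass to the limit, or justify directly that $u$ is $C^2$ in the open set $K\cap N$ where $|\nabla u|\neq 0$ (which follows from $\Delta_{\mathbf{p}}$ being uniformly elliptic there) so that $\operatorname{div}V\equiv 0$ pointwise. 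The treatment of $\Omega_0$ requires knowing $u$ is $\mathbf{p}$-harmonic on a neighborhood of $\partial\Omega_0$ from the inside as well; this is exactly the setup in Lemma \ref{lem:2.1} (where $u$ is the $\mathbf{p}$-capacity function of $K\setminus\bar\Omega_0$ with $u\equiv 1$ on $\bar\Omega_0$), so no new input is needed beyond quoting that construction. Once these regularity points are dispatched, the argument is the standard "constant vector dotted into a divergence-free field integrates to zero" computation.
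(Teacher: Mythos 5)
The key step in your argument --- rewriting the boundary integral as the flux across $\partial K$ of a divergence-free vector field --- does not go through. On $\partial K$ the outer normal is $\nu=g_K(x)=-\nabla u(x)/|\nabla u(x)|$, so the integrand is
\[
\langle x_0,g_K\rangle\,|\nabla u|^{\mathbf p-1}
= -\langle x_0,\nabla u\rangle\,|\nabla u|^{\mathbf p-2}.
\]
This is not the normal flux $\langle W,\nu\rangle$ of the $\mathbf p$-harmonic flux field $W=|\nabla u|^{\mathbf p-2}\nabla u$ (for which $\langle W,\nu\rangle=-|\nabla u|^{\mathbf p-1}$, a different quantity), nor of the field $|\nabla u|^{\mathbf p-1}x_0$ (which has the right flux but $\operatorname{div}\bigl(|\nabla u|^{\mathbf p-1}x_0\bigr)=\langle x_0,\nabla|\nabla u|^{\mathbf p-1}\rangle\neq 0$), nor of $W$ multiplied by any scalar function (multiplying $W$ by a non-constant scalar destroys the divergence-free property). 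Your expression $V=x_0\,|\nabla u|^{\mathbf p-2}\nabla u$ is a formal product of two vectors and does not define a vector field, and the manipulations $\operatorname{div}V=\langle x_0,\Delta_{\mathbf p}u\rangle=0$ do not typecheck: $\Delta_{\mathbf p}u$ is a scalar, so $\langle x_0,\Delta_{\mathbf p}u\rangle$ is meaningless, while if you intend the rank-one tensor $x_0\otimes W$, its row-wise flux across $\partial K$ equals $-x_0\int_{\partial K}|\nabla u|^{\mathbf p-1}\,d\mathcal H^{n-1}$, which is not the integral to be shown to vanish. So the identification of the target integral with $-\int_{\partial K}\langle V,\nu\rangle\,d\mathcal H^{n-1}$ is false, and the subsequent ``divergence-free'' step collapses. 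Your treatment of the inner boundary is also not legitimate: if $u\equiv 1$ on $\bar\Omega_0$, then $\nabla u$ jumps from $0$ inside $\Omega_0$ to a nonzero trace from the shell side, so any field built from $\nabla u$ is discontinuous across $\partial\Omega_0$ and the divergence theorem on $\Omega_0$ cannot cancel the shell-side inner flux.

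The paper's proof proceeds quite differently and does not attempt to find a divergence-free field whose flux is the integrand. It introduces an auxiliary function $f\in C^\infty\bigl(\overline{K\cap N}\bigr)$ with $f=u_K$ on $\partial N\cap K$ and $f=1$ on $\partial K$, and rewrites the integrand on $\partial K$ as
\[
\langle x_0,g_K\rangle\,|\nabla u_K|^{\mathbf p-1}
=|\nabla u_K|^{\mathbf p-2}\langle\nabla u_K,g_K\rangle\,\langle x_0,g_K\rangle\,(u_K-f),
\]
exploiting $\langle\nabla u_K,g_K\rangle=-|\nabla u_K|$ and $u_K-f=-1$ on $\partial K$. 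The essential feature is that the scalar factor $\phi:=\langle x_0,g_K\rangle(u_K-f)$ vanishes on $\partial N\cap K$, so the $\partial K$-integral is promoted to an integral over all of $\partial(K\cap N)$ of $\langle |\nabla u_K|^{\mathbf p-2}\nabla u_K,\nu\rangle\,\phi$, after which the divergence theorem and the weak $\mathbf p$-harmonicity identity \eqref{2.9} applied to this $\phi$ give zero. Your proposal has nothing playing the role of the cutoff factor $(u_K-f)$; without it, there is no mechanism to control the inner-boundary contribution, and the direct ``constant vector dotted into a divergence-free field'' argument you invoke simply does not fit the structure of the integrand.
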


\begin{proof}
Let $u_K$ be a weak solution to the $\mathbf{p}$-Laplace equation
in $K \cap N$, or equivalently,
\begin{equation}\label{2.9}
\int_{K\cap N} {{{\left|\nabla u_K(x)\right|}^{\mathbf{p}-2}}
\left\langle{\nabla u_K(x),\nabla\phi(x)}\right\rangle}dx
=0
\end{equation}
for any smooth function $\phi$ defined in $K\cap N$ with compact support. Consider the boundary value problem \eqref{1.1} and
let $f$ be a function in $C^\infty\left(\overline{K\cap N}\right)$ such that $f=u_K$ on $\partial N\cap K$ and $f=1$ on $\partial K$.
Notice that $$g_K(x)=-\frac{\nabla u_K(x)}{\left|{\nabla u_K(x)}\right|},$$
then for any $x_0\in \mathbb{R}^{n}$, we have the following calculation:
\begin{equation*}
\begin{split}
&\int_{{\mathbb{S}^{n - 1}}} {\left\langle {{x_0},\xi } \right\rangle } d{\mu_K}\left( \xi  \right)\\
=&\int_{{\mathbb{S}^{n - 1}}} {\left\langle {{x_0},\xi } \right\rangle } {\left| {\nabla {u_K}
\left( {g_K^{ - 1}\left( \xi  \right)} \right)} \right|^{{\mathbf{p}} - 1}}d{S_K}\left( \xi  \right)\\
=&\int_{\partial K} {{{\left| {\nabla {u_K}\left( x \right)} \right|}^{{\mathbf{p}} - 1}}
\left\langle {{x_0}, g_K(x)} \right\rangle } d{\mathcal{H}^{n - 1}}\\
=&\int_{\partial K} {{{\left| {\nabla {u_K}\left( x \right)} \right|}^{{\mathbf{p}} - 2}}} \left\langle {\nabla {u_K}\left( x \right),{g_K}\left( x \right)} \right\rangle
\left\langle {{x_0}, g_K(x)} \right\rangle  \left( {u_K\left( x \right)-f\left( x \right) } \right)d{\mathcal{H}^{n - 1}}\\
&+\int_{\partial N \cap K} {{{\left| {\nabla {u_K}\left( x \right)} \right|}^{{\mathbf{p}} - 2}}} \left\langle {\nabla {u_K}\left( x \right),{\nu _{\partial N \cap K}}\left( x \right)} \right\rangle \left\langle {{x_0}, g_K(x)} \right\rangle  \left( {u_K\left( x \right)-f\left( x \right) } \right)d{\mathcal{H}^{n - 1}}\\	
=&\int_{\partial \left( {K \cap N} \right)} {{{\left| {\nabla {u_K}\left( x \right)} \right|}^{{\mathbf{p}} - 2}}\left\langle {\nabla {u_K}\left( x \right),{\nu _{\partial \left( {K \cap N} \right)}}\left( x \right)} \right\rangle \left\langle {{x_0}, g_K(x)} \right\rangle \left( {u_K\left( x \right) - f\left( x \right)} \right)} d{\mathcal{H}^{n - 1}}\\
=&\int_{K \cap N} {\text{div}\left( {{{\left| {\nabla {u_K}\left( x \right)} \right|}^{{\mathbf{p}} - 2}}\nabla {u_K}\left( x \right)\left\langle {{x_0}, g_K(x)} \right\rangle \left( {u_K\left( x \right) - f\left( x \right)} \right)} \right)} dx\\
=& 0,\\ 	
\end{split}
\end{equation*}
where we have used the divergence theorem and \eqref{2.9}.
This proves the desired property.
\end{proof}

\section{The variational formula associated with $\mathbf{p}$-harmonic measure}\label{sect:3}

Associated with the $\mathbf{p}$-harmonic measure $\mu_K$ of
a compact convex set $K\subset\R^n$, Akman-Mukherjee \cite{AM24} introduced a continuous functional
\begin{equation}\label{3.1}
\Gamma\left(K\right)
=\int_{\mathbb{S}^{n-1}}h_K\left(\xi\right) d{\mu_K}\left(\xi\right).
\end{equation}
By Lemma \ref{lem:2.3}, it can be verified that the functional $\Gamma(\cdot)$ is translation invariant. That is, for any $x_0\in \mathbb{R}^{n}$,
\begin{equation}\label{3.2}
\Gamma \left({K+x_0}\right)=\Gamma\left(K\right).
\end{equation}

In the following part of this section, we will focus on calculating the variation of $\Gamma\left(K\right)$ with respect to the $q$-sum for $q>0$
and introduce the $L_q$ $\mathbf{p}$-harmonic measure. To do so, we will briefly review the concept of the $q$-sum.

Let $K$ and $L$ be two compact convex sets containing the origin.
For $q\ge1$ and $t\ge 0$, Firey's $q$-sum $K^t$ can be defined by
$h_{K^t}^q=h_K^q+th_L^q$ on $\mathbb{S}^{n-1}$.
Following B\"or\"oczky  et al. \cite{BZ12}, the $q$-sum $K^t$ for $0<q<1$ can be
defined as the Wulff shape of the function
$\left(h_K^q+ th_L^q\right)^{\frac{1}{q}}$, that is
\begin{equation}\label{3.3}
{K^t}=\left\{{x\in \mathbb{R}^{n}:\left\langle {x,\xi}\right\rangle
\le{\left( {h_K^q\left(\xi\right)
+th_L^q\left(\xi\right)} \right)}^{\frac{1}{q}}}\
\text{for all}\ \xi\in\mathbb{S}^{n-1}\right\}.
\end{equation}
In this case, $h_{K^t}^q=h_K^q+th_L^q$ holds almost everywhere on
$\mathbb{S}^{n-1}$ with respect to the surface area measure $S_{K^t}$ of $K^t$. Thus, we have
$S_{K^t}\left(\omega_t\right)=0$,
where
\[{\omega_t}=\left\{\xi\in {\mathbb{S}^{n- 1}}:h_{{K^t}}^q(\xi)\ne h_K^q
(\xi)+th_L^q(\xi)\right\}.\]

Let $K,L\in \mathcal{A}_+^{2,\alpha}$ and $q>0$. We take a small enough
\begin{equation}\label{3.4}
\tau
:=\tau\left(d_{\mathcal H}\left({\partial K,\partial N}\right),
d_{\mathcal H}\left({\partial L,\partial N}\right),
\left\|u\right\|_{W^{1,\mathbf{p}}\left(N\right)}\right)
>0,
\end{equation}
where $u$ is the solution to \eqref{1.1},
such that ${K^t}\in\mathcal{A}_+^{2,\alpha}$, $\partial K^{t}\subset N$, and
$K^{t}\cap\partial N=K\cap\partial N$ for all $\left|t\right|\le\tau$.
With this choice, we conclude that
$g_{K^t}:\partial K^t\to\mathbb{S}^{n-1}$ is a diffeomorphism. It follows that ${\mathcal{H}^{n-1}}\left({\omega _t}\right)=0$ and
\[\int_{\mathbb{S}^{n-1}}h_{K^t}^qd\xi
=\int_{\mathbb{S}^{n-1}}{(h_K^q+th_L^q)}d\xi.\]

Next, we consider the $\mathbf{p}$-harmonic measure corresponding
to $u(\cdot,t)\in W^{1,\mathbf{p}}(K^{t}\cap N)$, which is a weak solution
to the Dirichlet problem
\begin{equation}\label{3.5}
\left\{
\begin{aligned}
&\text{div}\left({{\left|{\nabla u\left(x,t\right)}\right|}^{\mathbf{p}-2}
\nabla u\left(x,t\right)}\right)=0&&x\in K^t\cap N,\\
&u\left(x,t\right)=0&&x\in\partial K^t,\\
&u\left(x,t\right) = u\left(\frac{x}{\left(1+t\right)^{\frac{1}{q}}}\right)&&x\in\partial N\cap K^t,
\end{aligned}
\right.
\end{equation}
where $\left| t \right|$ is small enough so that upon zero extension,
$u\left(x,t\right) \in {W^{1,\mathbf{p}}}\left( N \right)$.
By defining
\begin{equation}\label{3.6}
\mathcal{F}\left[h_{K^t}\right]\left(\xi\right)
:={\left|{\nabla u\left({\nabla{h_{K^t}}
\left(\xi\right),t}\right)}\right|^{\mathbf{p}-1}}\det
\left({{\bigtriangledown^2}{h_{{K^t}}}+h_{K^t}\mathbb I}\right),
\end{equation}
we obtain
\begin{equation*}
d{\mu_{{K^t}}}
={\left|{\nabla u\left({\nabla{h_{{K^t}}}\left(\xi\right),t} \right)}\right|^{\mathbf{p}-1}}
d{\cal H}^{n-1}{\llcorner_{\partial K^t}}
=\mathcal{F}\left[h_{K^t}\right]\left(\xi\right)d\xi,
\end{equation*}
and
\begin{equation}\label{3.7}
\Gamma\left(K^t\right)
=\int_{\mathbb{S}^{n-1}}{{h_{K^t}}\left(\xi\right)}
d{\mu_{K^t}}\left(\xi\right)
=\int_{{\mathbb{S}^{n-1}}}h_{K^t}
\left(\xi\right)\mathcal{F}\left[h_{K^t}\right]
\left(\xi\right)d\xi.
\end{equation}

\begin{lemma}\label{lem:3.1}
Let $1<\mathbf{p}<\infty$ and $q>0$, and let $\mathcal{F}$ be given by \eqref{3.6}. Then we have
\begin{equation}\label{3.8}
\mathcal{F}\left[ {\left( {1 + t} \right)^{\frac{1}{q}}{h_K}} \right]\left( \xi  \right)
= {\left( {1 + t} \right)^{\frac{n-\mathbf{p}}{q}}}\mathcal{F}\left[ {{h_K}} \right]\left( \xi  \right),
\end{equation}
for all $\left| t \right|\le\tau$.  Here $\tau$ is given in \eqref{3.4}.
\end{lemma}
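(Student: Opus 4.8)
The plan is to observe that \eqref{3.8} is nothing but the dilation–scaling law for the density $\mathcal{F}$. Taking the second body in the $q$-sum to be $L=K$ gives $h_K^q+th_K^q=(1+t)h_K^q$, hence $h_{K^t}=(1+t)^{1/q}h_K$ and $K^t=\lambda K$ with $\lambda:=(1+t)^{1/q}$; thus the left-hand side of \eqref{3.8} is $\mathcal{F}[h_{K^t}]$ for this particular perturbation. I would prove the identity by tracking separately the two factors appearing in \eqref{3.6}, namely the curvature factor $\det(\nabla^2 h_{K^t}+h_{K^t}\mathbb{I})$ and the gradient factor $|\nabla u(\nabla h_{K^t}(\xi),t)|^{\mathbf{p}-1}$.

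The curvature factor is immediate: from $h_{K^t}=\lambda h_K$ and linearity of $\nabla^2(\cdot)+(\cdot)\mathbb{I}$ we get $\nabla^2 h_{K^t}+h_{K^t}\mathbb{I}=\lambda(\nabla^2 h_K+h_K\mathbb{I})$, and since this is an $(n-1)\times(n-1)$ matrix its determinant picks up a factor $\lambda^{n-1}$. For the gradient factor I must identify the solution $u(\cdot,t)$ of \eqref{3.5} in this case, and the claim is $u(x,t)=u(x/\lambda)$ on $K^t\cap N$. To see this, note that $x\mapsto u(x/\lambda)$ is $\mathbf{p}$-harmonic because $\Delta_\mathbf{p}(u(\cdot/\lambda))=\lambda^{-\mathbf{p}}(\Delta_\mathbf{p}u)(\cdot/\lambda)=0$; it vanishes on $\partial K^t=\partial(\lambda K)$ since $u$ vanishes on $\partial K$; and on $\partial N\cap K^t$ it equals $u(x/\lambda)=u(x/(1+t)^{1/q})$, which is precisely the boundary datum prescribed in the third line of \eqref{3.5}. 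Since weak solutions of the $\mathbf{p}$-Laplace equation are determined by their continuous boundary values (uniqueness via the comparison principle), $u(\cdot,t)=u(\cdot/\lambda)$. Differentiating gives $\nabla u(x,t)=\lambda^{-1}(\nabla u)(x/\lambda)$; evaluating at $x=\nabla h_{K^t}(\xi)=\lambda\nabla h_K(\xi)$ yields $|\nabla u(\nabla h_{K^t}(\xi),t)|^{\mathbf{p}-1}=\lambda^{-(\mathbf{p}-1)}|\nabla u(\nabla h_K(\xi))|^{\mathbf{p}-1}$. Multiplying the two factors gives $\mathcal{F}[\lambda h_K](\xi)=\lambda^{n-1-(\mathbf{p}-1)}\mathcal{F}[h_K](\xi)=\lambda^{n-\mathbf{p}}\mathcal{F}[h_K](\xi)$, and $\lambda^{n-\mathbf{p}}=(1+t)^{(n-\mathbf{p})/q}$ is exactly \eqref{3.8}.

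The only step that is more than a formal computation is the identity $u(\cdot,t)=u(\cdot/\lambda)$: one must check that $x\mapsto u(x/\lambda)$ is well defined and lies in $W^{1,\mathbf{p}}$ on all of $K^t\cap N$ (equivalently that $\lambda^{-1}(K^t\cap N)\subseteq K\cap N$, at least near the portion of the boundary that matters), and that the traces of the two functions coincide on $\partial(K^t\cap N)$ away from the negligible corner set $\partial K^t\cap\partial N$. This is exactly what the smallness of $\tau$ in \eqref{3.4} — chosen in terms of $d_{\mathcal H}(\partial K,\partial N)$, $d_{\mathcal H}(\partial L,\partial N)$ and $\|u\|_{W^{1,\mathbf{p}}(N)}$ — together with the standing choice of the collar $N$ and the recorded property $K^t\cap\partial N=K\cap\partial N$, is designed to guarantee; everything else is bookkeeping with scaling exponents.
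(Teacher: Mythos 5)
Your proposal is correct and follows essentially the same route as the paper: take $L=K$ so that $K^t=\lambda K$ with $\lambda=(1+t)^{1/q}$, use uniqueness of the $\mathbf{p}$-harmonic Dirichlet problem to identify $u(\cdot,t)=u(\cdot/\lambda)$, and then track the homogeneity exponents — $\lambda^{n-1}$ from the cofactor determinant and $\lambda^{-(\mathbf{p}-1)}$ from the gradient — to land on $\lambda^{n-\mathbf{p}}$. The only presentational difference is that you split the two factors explicitly and add a remark about why $u(\cdot/\lambda)$ is well defined on $K^t\cap N$ for $|t|\le\tau$, which the paper leaves implicit in the choice of $\tau$ and the standing assumption $K^t\cap\partial N=K\cap\partial N$.
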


\begin{proof}
The proof is similar to that of Lemma 3.12 in \cite{AM24}. For completeness, we provide a proof as follows.

We first deal with the case that $0<q<1$. By setting $L=K$ in \eqref{3.3},
we obtain that $K^{t}=\lambda K$ is the Wulff shape
of the support function $\lambda h_{K}$, where $\lambda=\left(1+t\right)^{\frac{1}{q}}$.
Let $u_{\lambda}\left(\cdot\right):=u\left(\cdot,\lambda^{q}-1\right)$ be the weak solution
to the Dirichlet problem
\begin{equation}\label{3.9}
\left\{
\begin{aligned}
&\text{div}\left( {{{\left| {\nabla u_{\lambda}\left( x \right)} \right|}^{\mathbf{p}-2}}
\nabla u_{\lambda}\left( x \right)} \right) = 0&&x\in\lambda K\cap N,\\
&u_{\lambda}\left(x\right)=0&&x\in\partial(\lambda K),\\
&u_{\lambda}\left(x\right)= u\left(\frac{x}{\lambda}\right)&&x\in\partial N\cap\lambda K,
\end{aligned}
\right.
\end{equation}
for $\left| {{\lambda ^q} - 1} \right| \le \tau $.
Then we have
\begin{equation}\label{3.10}
\begin{split}
\mathcal{F}\left[ {{\lambda h_{K}}} \right]\left( \xi \right)
&={\left| {\nabla u_{\lambda}\left( {\lambda \nabla {h_{K}}\left( \xi  \right)} \right)}
\right|^{\mathbf{p}-1}}\lambda^{n-1}\det \left( {{\bigtriangledown ^2}{h_{K}}+{h_{K}}\mathbb{I}} \right)\\
&= \left({\frac{\left|{\nabla {u_\lambda }\left( {\lambda \nabla {h_K}\left( \xi  \right)} \right)}\right|}{\left| {\nabla u\left( {\nabla {h_K}\left( \xi  \right)} \right)} \right|}}\right)^{\mathbf{p} - 1}
{\lambda ^{n - 1}}\mathcal{F} \left[ {{ h_{K}}} \right]\left( \xi \right).
\end{split}
\end{equation}
As $u$ is the solution to \eqref{1.1}, we have that $u\left(\frac{x}{\lambda}\right)$
is also the solution to \eqref{3.9} in $\lambda K$. By the uniqueness of
the solution to \eqref{3.9},
$u_{\lambda}\left(x\right)=u\left(\frac{x}{\lambda}\right)$
in $\lambda K$. It follows that
$\nabla {u_\lambda }\left( x \right)
= \frac{1}{\lambda }\nabla u\left( {\frac{x}{\lambda }} \right)$, thus \eqref{3.10} gives
\[\mathcal{F}\left[ {\lambda {h_K}} \right]\left( \xi  \right)
= {\lambda^{n-\mathbf{p}}}\mathcal{F}\left[ {{h_K}} \right]\left( \xi  \right)\]
for $\left| {{\lambda ^q} - 1} \right| \le \tau $.
This proves the case $0<q<1$.

Note that the $q$-sum $K^t$ for $q\ge1$ can also be given by \eqref{3.3},
and the argument for the case $q\ge1$ follows along the same lines. Therefore, the remaining case of the proof is omitted.
\end{proof}

We define $\dot u\left(x\right)
={{{\left. {\frac{\partial}{{\partial t}}}\right|}_{t= 0}}u\left( {x,t} \right)}$
and present a differentiability lemma as follows.

\begin{lemma}\label{lem:3.2}
Let $1<\mathbf{p}<\infty$ and $q>0$, and let $K, L\in \mathcal{A}_+^{2,\alpha}$
be two compact convex sets containing the origin. If $u\left(\cdot,t\right)\in W^{1,\mathbf{p}}\left(K^{t} \cap N\right)$ is the solution to \eqref{3.5},
the following holds:
\begin{enumerate}[label=\upshape (\roman*)]

\item The map $t\mapsto u\left({x,t} \right)$ is differentiable at $t=0$
for all $x\in\bar K\cap N$, and $\dot u\in C^{2,\beta}\left(\overline{K\cap N}\right)$ with $\beta=\beta(n,\mathbf{p},\alpha)$;
				
\item For $x\in\partial K$ and $q\ge1$,
$\dot u(x)=\left| {\nabla u\left(x\right)}\right|
\left({\frac{1}{q}h_K^{1-q}\left(g_K\left(x\right)\right)h_L^q
\left( {{g_K}\left(x\right)}\right)}\right)$. If $0<q<1$, this equality
holds almost everywhere with respect to $S_K$.
\end{enumerate}
\end{lemma}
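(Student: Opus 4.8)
The plan is to establish part (i) by differentiating the Dirichlet problem \eqref{3.5} in $t$ and then to read off the boundary value $\dot u$ on $\partial K$ from the boundary condition, which will give part (ii).

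\textbf{Step 1: Setting up and differentiating the equation.} First I would fix $K,L\in\mathcal{A}_+^{2,\alpha}$ and recall from Section~\ref{sect:3} that for $|t|\le\tau$ we have $K^t\in\mathcal{A}_+^{2,\alpha}$, $\partial K^t\subset N$, $K^t\cap\partial N=K\cap\partial N$, and $g_{K^t}$ is a diffeomorphism. The key difficulty is that the domain $K^t\cap N$ varies with $t$, so before differentiating I would flatten this dependence: using the diffeomorphism $g_{K^t}$ (equivalently $\nabla h_{K^t}$), pull the equation back to the fixed reference configuration $K\cap N$ (or work on a fixed annular reference domain), so that $u(\cdot,t)$ becomes a family $\widetilde u(\cdot,t)$ of functions on a fixed domain solving an equation whose coefficients depend smoothly on $t$ through $h_{K^t}$. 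Here one uses that $t\mapsto h_{K^t}=\big(h_K^q+th_L^q\big)^{1/q}$ is real-analytic in $t$ near $0$ (since $h_K>0$), hence the pulled-back operator depends smoothly on $t$. Since $|\nabla u(\cdot,0)|\ne 0$ in $\bar K\cap N$, the $\mathbf p$-Laplacian is uniformly elliptic there, and linearizing in $t$ gives a linear uniformly elliptic equation for $\dot u$ with $C^{0,\beta}$ coefficients; the implicit function theorem in Hölder spaces (combined with the $C^{1,\beta}$ estimates of Tolksdorf \cite{T84} and the boundary Schauder estimates \cite{GT01}, exactly as in Lemma~3.12 of \cite{AM24}) yields that $t\mapsto u(x,t)$ is differentiable at $t=0$ for $x\in\bar K\cap N$ and that $\dot u\in C^{2,\beta}(\overline{K\cap N})$. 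This is the part that parallels \cite{AM24} most closely, so I would invoke that argument rather than redo it in detail.

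\textbf{Step 2: The boundary condition on $\partial N\cap K$ is unchanged.} On $\partial N\cap K^t=\partial N\cap K$ the boundary datum in \eqref{3.5} is $u\big(x/(1+t)^{1/q}\big)$; differentiating in $t$ at $t=0$ gives $\dot u(x)=-\tfrac1q\langle\nabla u(x),x\rangle$ there, a smooth inhomogeneous Dirichlet datum. (This contributes to the well-posedness of the linear problem for $\dot u$ but is not needed for part (ii).)

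\textbf{Step 3: The boundary value of $\dot u$ on $\partial K$, giving (ii).} This is the substantive point. Fix $x\in\partial K$ a point where $g_K$ is single-valued (all of $\partial K$ if $K\in\mathcal{A}_+^{2,\alpha}$; $S_K$-a.e.\ point in the $0<q<1$ case), and write $\xi=g_K(x)$, so $x=\nabla h_K(\xi)$. The boundary point of $K^t$ with outer normal $\xi$ is $x_t:=\nabla h_{K^t}(\xi)$, and $u(x_t,t)=0$ for all $|t|\le\tau$. Differentiating this identity at $t=0$ and using the chain rule,
\[
0=\frac{d}{dt}\Big|_{t=0}u(x_t,t)=\big\langle\nabla u(x),\dot x\big\rangle+\dot u(x),
\qquad \dot x:=\frac{d}{dt}\Big|_{t=0}\nabla h_{K^t}(\xi).
\]
Now decompose $\dot x$ in the orthonormal frame $\{e^1,\dots,e^{n-1},\xi\}$. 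Since $\nabla u(x)=-|\nabla u(x)|\,\xi$ (the defining relation $g_K(x)=-\nabla u_K(x)/|\nabla u_K(x)|$ from Lemma~\ref{lem:2.3}), only the $\xi$-component of $\dot x$ survives the inner product, and $\langle \dot x,\xi\rangle=\tfrac{d}{dt}\big|_{t=0}\langle\nabla h_{K^t}(\xi),\xi\rangle=\tfrac{d}{dt}\big|_{t=0}h_{K^t}(\xi)$, using $\langle\nabla h_{K^t}(\xi),\xi\rangle=h_{K^t}(\xi)$ by $1$-homogeneity. Since $h_{K^t}(\xi)=\big(h_K^q(\xi)+th_L^q(\xi)\big)^{1/q}$ we get $\tfrac{d}{dt}\big|_{t=0}h_{K^t}(\xi)=\tfrac1q h_K^{1-q}(\xi)h_L^q(\xi)$. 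Therefore
\[
\dot u(x)=-\langle\nabla u(x),\dot x\rangle=|\nabla u(x)|\,\langle\xi,\dot x\rangle=|\nabla u(x)|\Big(\tfrac1q h_K^{1-q}(g_K(x))\,h_L^q(g_K(x))\Big),
\]
which is exactly the claimed formula in (ii). For $q\ge1$ this holds at every $x\in\partial K$ since everything in sight is smooth; for $0<q<1$ the identity $h_{K^t}^q=h_K^q+th_L^q$ holds only $S_{K^t}$-a.e.\ (equivalently $\mathcal H^{n-1}$-a.e.\ on $\mathbb S^{n-1}$ after composing with the diffeomorphism $g_{K^t}$), so the computation of $\tfrac{d}{dt}\big|_{t=0}h_{K^t}(\xi)$ is valid only for $\mathcal H^{n-1}$-a.e.\ $\xi$, hence the formula for $\dot u(x)$ holds $S_K$-a.e.

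\textbf{Main obstacle.} The genuinely delicate step is Step~1: justifying differentiability of $t\mapsto u(\cdot,t)$ and the regularity $\dot u\in C^{2,\beta}$, because of the moving domain and the degeneracy of $\Delta_{\mathbf p}$ away from $\{|\nabla u|\ne0\}$. The saving feature is that $|\nabla u|\ne0$ on all of $\bar K\cap N$ by the standing assumption on $N$, so the operator is uniformly elliptic there and the Hölder-space implicit function theorem applies verbatim as in \cite[Lemma 3.12]{AM24}; I would state this and cite that argument. Once differentiability is in hand, Steps 2--3 are elementary chain-rule computations, the only care being the a.e.\ qualification when $0<q<1$.
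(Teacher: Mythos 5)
Your proof is correct and follows essentially the same route as the paper: part (i) is cited from \cite{AM24}, and part (ii) is obtained by differentiating the boundary identity $u(\nabla h_{K^t}(\xi),t)=0$ at $t=0$, then using $g_K(x)=-\nabla u(x)/|\nabla u(x)|$ and Euler's homogeneity theorem, with the $S_K$-a.e.\ caveat for $0<q<1$ coming from $h_{K^t}^q=h_K^q+th_L^q$ holding only $S_{K^t}$-a.e. The only cosmetic difference is that you read off $\langle\xi,\dot x\rangle$ directly from $\langle\nabla h_{K^t}(\xi),\xi\rangle=h_{K^t}(\xi)$, whereas the paper computes $\dot x=\nabla\left(\frac{1}{q}h_K^{1-q}h_L^q\right)$ in full and then applies Euler's theorem to that $1$-homogeneous function---the two are equivalent.
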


\begin{proof}
Part (i) comes from Proposition 3.20 of \cite{AM24}. Here, we provide a brief proof of (ii) for the case $0<q<1$; the case $q\ge1$ follows similarly.

Define $\omega\left(x,t\right)=\frac{u\left(x,t\right)-u\left(x,0\right)}{t}$
for $t\neq0$. According to (3.23) in \cite{AM24}, there exists a sequence $\{t_k\}$ such that
$t_k\to 0$ as $k\to\infty$, and the limit
\begin{equation*}
\lim\limits_{k\to\infty }\omega\left(x,{t_k}\right)
=\lim\limits_{k\to\infty }
\frac{u\left({x,{t_k}} \right)-u\left({x,0}\right)}{t_k}
=:\omega\left(x\right)
\end{equation*}
exists for all $x\in K\cap N$. Moreover, for $x\in \partial K$, there
exists a sequence $\left\{x_j\right\}\subset \text{int}K$ such that $x_j\to x$
as $j\to\infty$, and
\begin{equation*}
\omega\left(x\right)
=\lim\limits_{j\to\infty }\omega\left(x_j\right)
=\lim\limits_{j\to\infty }\lim \limits_{k\to\infty}\omega\left({x_j},{t_k}\right)
=\lim\limits_{k\to\infty }\frac{u\left(x,t_k\right)-u\left(x,0\right)}{t_k},
\end{equation*}
for any $x\in\partial K$. Hence, the function $t\to u\left({\cdot,t}\right)$
is differentiable at $t=0$ for all $x\in\bar K\cap N$.
It follows from (3.26) and (3.27) of \cite{AM24} that
$\dot u\in C^{2,\beta} \left(\overline{K\cap N}\right)$,
and
\[\left| {\omega \left( {{x_k},{t_k}} \right)
-\omega\left( {{x_k},0}\right)}\right|\leqslant\Lambda\left|{x_k-x}\right|\]
for $\Lambda>0$ and any $x_k\in\partial K^{t_k}$. Thus,
\begin{equation*}
\omega\left(x\right)
=\lim\limits_{k\to\infty}\omega\left(x_k,t_k\right)
=\lim\limits_{k\to\infty}\frac{u\left(x_k,t_k\right)-u\left(x_k,0\right)}{t_k}
=\lim\limits_{k\to\infty}\frac{u\left(x\right)-u\left(x_k,0\right)}{t_k}
\end{equation*}
for any $x\in\partial K$.

For $\xi\in\mathbb{S}^{n-1}$, there exists $x\in\partial K$ and
$x_k\in\partial K^{t_k}$ so that $x=\nabla h_{K}\left(\xi\right)$,
$x_{k}=\nabla h_{K^{t_k}}\left(\xi\right)$.
Then, we compute:
\begin{equation*}
\begin{split}
\nabla h_{K^{t_k}}
&=\nabla {\left({h_K^q+t_kh_L^q}\right)^{\frac{1}{q}}}\\
&= {\left({h_K^q+t_kh_L^q}\right)^{\frac{{1-q}}{q}}}h_K^{q - 1}\nabla {h_K}
+t_k{\left( {h_K^q + t_kh_L^q} \right)^{\frac{{1 - q}}{q}}}h_L^{q - 1}\nabla {h_L}\\
&={\left({1+t_kh_L^qh_K^{-q}} \right)^{\frac{{1 - q}}{q}}}\nabla {h_K}
+t_k{\left( {{{\left( {h_L^qh_K^{-q}}\right)}^{-1}}+t_k} \right)^{\frac{{1 - q}}{q}}}\nabla {h_L}\\
&=\nabla {h_K}+\left( {{{\left( {1 +t_kh_L^qh_K^{ - q}} \right)}^{\frac{{1 - q}}{q}}} - 1} \right)\nabla {h_K}
+ t_kh_L^{q - 1}h_K^{1 - q}{\left( {1 + t_kh_L^qh_K^{ - q}} \right)^{\frac{{1 - q}}{q}}}\nabla {h_L},
\end{split}
\end{equation*}
$S_{K^{t_k}}$-almost everywhere. Taking the limit as $k\to \infty$, we obtain:
\begin{equation*}
\begin{split}
\mathop {\lim }\limits_{k \to \infty} \frac{{{x_k} - x}}{t_k}
&= \mathop {\lim }\limits_{k \to \infty} \frac{{\left( {{{\left( {1 + t_kh_L^qh_K^{ - q}} \right)}^{\frac{{1 - q}}{q}}} - 1} \right)\nabla {h_K} + t_kh_L^{q - 1}h_K^{1 - q}{{\left( {1 +t_kh_L^qh_K^{ - q}} \right)}^{\frac{{1 - q}}{q}}}\nabla {h_L}}}{t_k}\\
&= \frac{{1 - q}}{q}h_L^qh_K^{ - q}\nabla {h_K} + h_L^{q - 1}h_K^{1 - q}\nabla {h_L}\\
&= \nabla \left( {\frac{1}{q}h_K^{1 - q}h_L^q} \right),
\end{split}.
\end{equation*}
$S_{K}$-almost everywhere. Thus,
\begin{equation*}
\begin{split}
\omega \left( x \right)
=\mathop {\lim }\limits_{k \to \infty } \frac{{u\left( x \right) - u\left( {{x_k},0} \right)}}{{{t_k}}}
=-\left\langle {\nabla u\left( x \right),\nabla \left( {\frac{1}{q}h_K^{1 - q}h_L^q} \right)} \right\rangle,
\end{split}
\end{equation*}
$S_{K}$-almost everywhere for all $x\in \partial K$.
Notice that
$\xi=-\frac{{\nabla u\left( x \right)}}{{\left| {\nabla u\left( x \right)} \right|}}$
and
\[\frac{1}{q}h_K^{1 - q}\left( \xi  \right)h_L^q\left( \xi  \right)
=\left\langle {\xi ,\nabla \left( {\frac{1}{q}h_K^{1 - q}
\left(\xi\right)h_L^q\left(\xi\right)} \right)} \right\rangle,\]
due to the Euler's homogeneous function theorem.
We can conclude that
\[\omega \left( x \right) = \left| {\nabla u\left( x \right)} \right|\left( {\frac{1}{q}h_K^{1 - q}\left( {{g_K}\left( x \right)} \right)h_L^q\left( {{g_K}\left( x \right)} \right)} \right).\]
This completes the proof of the second assertion for the case $0<q<1$.
\end{proof}

In the following, we prove two lemmas which are critical for establishing the variational formula of
$\Gamma\left(K \right)$ with respect to the $q$-sum. The first one can be stated as follows.
\begin{lemma}\label{lem:3.3}
Let $1<\mathbf{p}<\infty$, and let $K, L\in \mathcal{A}_+^{2,\alpha}$
be two compact convex sets containing the origin.
Then, for the Wulff shape $K^t$ with $\left| t \right|\le\tau$ (where $\tau$ is given in \eqref{3.4}),
if $0<q<1$, we have
\begin{equation*}
\begin{split}
{\left.{\frac{d}{{dt}}}\right|_{t = 0}}\mathcal{F}\left[ {{h_{{K^t}}}} \right]\left(\xi\right)
=&\sum\limits_{i,j=1}^{n-1} {{\bigtriangledown _j}
\left({{C_{i,j}}\left[ {{\bigtriangledown ^2}{h_K}+{h_K}{\mathbb I}}\right]
{{\left|{\nabla u\left({\nabla {h_{{K}}}\left(\xi\right)}\right)}\right|}^{\mathbf{p}-1}}
{\bigtriangledown _i}\left( {\frac{1}{q}h_K^{1-q}h_L^q} \right)}\right)}\\
&-\left( {\mathbf{p}-1}\right)
{\left| {\nabla u\left( {\nabla {h_{{K}}}\left(\xi\right)}\right)}\right|^{\mathbf{p}-2}}
\det\left( {{\bigtriangledown ^2}{h_K} + {h_K}{\mathbb I}}\right)
\left\langle{\nabla\dot u\left({\nabla{h_K}\left(\xi\right)}\right),\xi}\right\rangle
\end{split}
\end{equation*}
$S_K$-almost everywhere on $\mathbb{S}^{n-1}$. If $q\ge1$, this equality always holds on $\mathbb{S}^{n-1}$.
\end{lemma}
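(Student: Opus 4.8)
The plan is to differentiate the explicit formula \eqref{3.6} for $\mathcal{F}[h_{K^t}]$ with respect to $t$ at $t=0$, treating the two factors --- the gradient term $|\nabla u(\nabla h_{K^t},t)|^{\mathbf{p}-1}$ and the Hessian determinant $\det(\bigtriangledown^2 h_{K^t}+h_{K^t}\mathbb{I})$ --- separately and then combining by the product rule. Since $K,L\in\mathcal{A}_+^{2,\alpha}$ and $|t|\le\tau$, every $K^t$ is again in $\mathcal{A}_+^{2,\alpha}$ with $\partial K^t\subset N$, so all quantities are smooth enough to differentiate and, in the case $0<q<1$, the identity $h_{K^t}^q=h_K^q+th_L^q$ holds $S_{K^t}$-a.e., which is why the conclusion is only claimed $S_K$-a.e. there (and everywhere when $q\ge 1$).

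First I would record the derivative of the Hessian-determinant factor. Using Jacobi's formula, $\frac{d}{dt}\big|_{t=0}\det(\bigtriangledown^2 h_{K^t}+h_{K^t}\mathbb{I})=\sum_{i,j}C_{i,j}[\bigtriangledown^2 h_K+h_K\mathbb{I}]\,(\bigtriangledown_{i,j}+\delta_{i,j})\big(\frac{d}{dt}\big|_{t=0}h_{K^t}\big)$, and by Lemma \ref{lem:3.2}(ii) together with the Wulff-shape computation of $\nabla h_{K^t}$ already carried out in its proof, $\frac{d}{dt}\big|_{t=0}h_{K^t}=\frac{1}{q}h_K^{1-q}h_L^q$ ($S_K$-a.e. if $0<q<1$). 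Next I would differentiate the gradient factor: write $\frac{d}{dt}\big|_{t=0}|\nabla u(\nabla h_{K^t}(\xi),t)|^{\mathbf{p}-1} = (\mathbf{p}-1)|\nabla u|^{\mathbf{p}-3}\langle \nabla u,\ \frac{d}{dt}\big|_{t=0}\nabla u(\nabla h_{K^t}(\xi),t)\rangle$, and the chain rule splits the inner derivative into a spatial part $D^2u(\nabla h_K(\xi))\cdot\frac{d}{dt}\big|_{t=0}\nabla h_{K^t}(\xi)$ plus the time part $\nabla\dot u(\nabla h_K(\xi))$. Here $\frac{d}{dt}\big|_{t=0}\nabla h_{K^t}=\nabla\big(\frac{1}{q}h_K^{1-q}h_L^q\big)$, again from the proof of Lemma \ref{lem:3.2}.

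The main obstacle --- and the step that makes the stated formula clean --- is organizing these pieces using Lemma \ref{lem:2.2}. Writing $\nabla u = -|\nabla u|\,\xi$ (from $g_K(x)=-\nabla u_K/|\nabla u_K|$), the contribution of $D^2u$ pairs against $\xi$ and the tangential directions $e^i$, so parts (i) and (ii) of Lemma \ref{lem:2.2} convert $\langle D^2u(\nabla h_K(\xi))\,v,\xi\rangle$ (for $v=\nabla(\frac1q h_K^{1-q}h_L^q)$) into curvature-times-cofactor expressions. The $\xi$-component produces, via part (i), a term proportional to $\kappa|\nabla u|\,\mathrm{Tr}(C[\bigtriangledown^2 h_K+h_K\mathbb{I}])$ times $\langle\xi,v\rangle$, while the tangential components produce, via part (ii), terms $-\kappa\sum_j C_{i,j}[\bigtriangledown^2 h_K+h_K\mathbb{I}]\bigtriangledown_j(|\nabla u|)$ contracted with $\bigtriangledown_i v$. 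Multiplying by the surviving $\det(\bigtriangledown^2 h_K+h_K\mathbb{I})$ from the product rule and using $\kappa\circ g_K^{-1} = 1/\det(\bigtriangledown^2 h_K+h_K\mathbb{I})$ (equation \eqref{2.7}) cancels the curvature factors; after this cancellation the spatial-derivative contribution and the Hessian-determinant contribution assemble precisely into the divergence $\sum_{i,j}\bigtriangledown_j\big(C_{i,j}[\bigtriangledown^2 h_K+h_K\mathbb{I}]\,|\nabla u|^{\mathbf{p}-1}\,\bigtriangledown_i(\frac1q h_K^{1-q}h_L^q)\big)$, using that $C[\bigtriangledown^2 h_K+h_K\mathbb{I}]$ is divergence-free in the sense $\sum_j\bigtriangledown_j C_{i,j}=0$ (a standard Minkowski-type identity for cofactor matrices of $\bigtriangledown^2 h+h\mathbb{I}$). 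The leftover time-derivative piece is exactly $-(\mathbf{p}-1)|\nabla u|^{\mathbf{p}-2}\det(\bigtriangledown^2 h_K+h_K\mathbb{I})\langle\nabla\dot u(\nabla h_K(\xi)),\xi\rangle$, and since $|\nabla u|^{\mathbf{p}-3}|\nabla u| = |\nabla u|^{\mathbf{p}-2}$, collecting the two surviving terms yields \eqref{3.8}--style identity stated in the lemma. For $q\ge 1$ one repeats the argument with the Firey $q$-sum, where $h_{K^t}^q=h_K^q+th_L^q$ holds everywhere, so the $S_K$-a.e. qualifier drops; this is precisely the remark at the end of the statement.
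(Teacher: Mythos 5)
Your proposal follows essentially the same route as the paper: product rule on \eqref{3.6}, Jacobi's formula (trace against the cofactor matrix) for the Hessian-determinant factor, chain rule splitting the gradient term into a spatial piece handled by Lemma \ref{lem:2.2} plus the time derivative $\nabla\dot u$, cancellation of $\kappa$ via \eqref{2.7}, and the divergence-free identity $\sum_j\bigtriangledown_j C_{i,j}=0$ to assemble the final divergence form. The only place where the paper is more explicit is in justifying that $\tfrac{d}{dt}\big|_{t=0}(\bigtriangledown^2 h_{K^t}+h_{K^t}\mathbb{I})$ agrees $S_K$-a.e.\ with the derivative of $\bigtriangledown^2(h_K^q+th_L^q)^{1/q}+(h_K^q+th_L^q)^{1/q}\mathbb{I}$ when $0<q<1$ (it does so by differentiating an integral identity against $dS_{K^t}$ rather than by asserting pointwise differentiability of $h_{K^t}$ in $t$), but your acknowledgment of the $S_K$-a.e.\ caveat covers the same ground.
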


\begin{proof}	
Since the proof for the case $q\ge1$ is similar to that for the case $0<q<1$,
we will focus only on the latter.

According to \eqref{3.6}, we have the following calculation
\begin{equation}\label{3.11}
\begin{split}
&{\left.{\frac{d}{{dt}}}\right|_{t = 0}}\mathcal{F}\left[ {{h_{{K^t}}}}\right]\left(\xi\right)\\
=&{\left. {\frac{d}{{dt}}} \right|_{t = 0}}\left( {{{\left| {\nabla u\left( {\nabla {h_{{K^t}}}\left(\xi\right),t} \right)} \right|}^{\mathbf{p}-1}}\det \left( {{\bigtriangledown ^2}{h_{K^t}} + {h_{K^t}}{\mathbb I}} \right)} \right)\\
=&\left( {\mathbf{p}-1} \right){\left| {\nabla u\left( {\nabla {h_K}\left( \xi  \right)} \right)} \right|^{\mathbf{p}-2}}{\left. {\det \left( {{\bigtriangledown ^2}{h_K} + {h_K}{\mathbb I}} \right)\frac{d}{{dt}}} \right|_{t = 0}}\left| {\nabla u\left( {\nabla {h_{{K^t}}}\left( \xi  \right),t} \right)} \right| \\
&+{\left| {\nabla u\left( {\nabla {h_K}\left( \xi  \right)} \right)} \right|^{\mathbf{p}-1}}{\left. {\frac{d}{{dt}}} \right|_{t = 0}}\det \left( {{\bigtriangledown ^2}{h_{{K^t}}} + {h_{{K^t}}}{\mathbb I}} \right).		
\end{split}
\end{equation}
Notice that
\[\int_{{\mathbb{S}^{n-1}}} {\left( {{\bigtriangledown ^2}{h_{{K^t}}} + {h_{{K^t}}}{\mathbb{I}}} \right)} d{S_{{K^t}}}
= \int_{{\mathbb{S}^{n-1}}} \left({\bigtriangledown ^2}{{\left( {h_K^q + th_L^q} \right)}^{\frac{1}{q}}}
+ {{\left( {h_K^q + th_L^q} \right)}^{\frac{1}{q}}}{\mathbb{I}}\right) d{S_{{K^t}}},\]
we differentiate both sides with respect to $t$ at $t=0$ and obtain
\begin{equation*}
\begin{split}
&\int_{{\mathbb{S}^{n-1}}}{{{\left. {\frac{d}{{dt}}} \right|}_{t = 0}}\left( {{\bigtriangledown ^2}{h_{{K^t}}}
+ {h_{{K^t}}}{\mathbb{I}}} \right)} d{S_K}
+\int_{{\mathbb{S}^{n-1}}} \left({{\bigtriangledown ^2}{h_{{K}}} + {h_{{K}}}{\mathbb{I}}}\right)
{\left. {\frac{d}{{dt}}} \right|_{t = 0}}d{S_{{K^t}}}\\
=&\int_{{\mathbb{S}^{n-1}}} {{{\left. {\frac{d}{{dt}}} \right|}_{t = 0}}
\left( {{\bigtriangledown ^2}{{\left( {h_K^q + th_L^q} \right)}^{\frac{1}{q}}}
+ {{\left( {h_K^q + th_L^q} \right)}^{\frac{1}{q}}}{\mathbb{I}}} \right)} d{S_K}
+\int_{{\mathbb{S}^{n-1}}} \left({{\bigtriangledown ^2}{{ {h_K}}}
+ { {h_K} }{\mathbb{I}}}\right) {\left. {\frac{d}{{dt}}} \right|_{t = 0}}d{S_{{K^t}}}.
\end{split}
\end{equation*}
This implies that
\[\int_{{\mathbb{S}^{n-1}}} {{{\left. {\frac{d}{{dt}}} \right|}_{t = 0}}\left( {{\bigtriangledown ^2}{h_{{K^t}}} + {h_{{K^t}}}{\mathbb{I}}} \right)} d{S_K}
= \int_{{\mathbb{S}^{n-1}}} {{{\left. {\frac{d}{{dt}}} \right|}_{t = 0}}\left( {{\bigtriangledown ^2}{{\left( {h_K^q + th_L^q} \right)}^{\frac{1}{q}}} + {{\left( {h_K^q + th_L^q} \right)}^{\frac{1}{q}}}{\mathbb{I}}} \right)} d{S_K}.\]
Therefore,
\[{{{\left. {\frac{d}{{dt}}} \right|}_{t = 0}}\left( {{\bigtriangledown ^2}{h_{{K^t}}} + {h_{{K^t}}}{\mathbb{I}}} \right) = {{\left. {\frac{d}{{dt}}} \right|}_{t = 0}}\left( {{\bigtriangledown ^2}{{\left( {h_K^q + th_L^q} \right)}^{\frac{1}{q}}} + {{\left( {h_K^q + th_L^q} \right)}^{\frac{1}{q}}}{\mathbb{I}}} \right)}\]
$S_K$-almost everywhere. Hence,
 \begin{equation}\label{3.12}
\begin{split}
&{\left. {\frac{d}{{dt}}} \right|_{t = 0}}\det \left( {{\bigtriangledown ^2}{h_{{K^t}}} + {h_{{K^t}}}{\mathbb I}} \right)\\
=&{\rm{Tr}}\left( {C\left[ {{\bigtriangledown ^2}{h_K} + {h_K}{\mathbb I}} \right]{{\left. {\frac{d}{{dt}}} \right|}_{t = 0}}\left( {{\bigtriangledown ^2}{h_{{K^t}}} + {h_{{K^t}}}{\mathbb I}} \right)} \right)\\
=&{\rm{Tr}}\left( {C\left[ {{\bigtriangledown ^2}{h_K} + {h_K}{\mathbb I}} \right]{{\left. {\frac{d}{{dt}}} \right|}_{t = 0}}\left( {{\bigtriangledown ^2}{{\left( {h_K^q + th_L^q} \right)}^{\frac{1}{q}}} + {{\left( {h_K^q + th_L^q} \right)}^{\frac{1}{q}}}{\mathbb I}} \right)} \right)\\
=&{\rm{Tr}}\left( {C\left[ {{\bigtriangledown ^2}{h_K} + {h_K}{\mathbb I}} \right]\left( {{\bigtriangledown ^2}\left( {\frac{1}{q}h_K^{1 - q}h_L^q} \right) + \left( {\frac{1}{q}h_K^{1 - q}h_L^q} \right){\mathbb I}} \right)} \right).
\end{split}
\end{equation}
$S_K$-almost everywhere.

As the unit outer normal $\xi$ of $K^t$ satisfies the identity
\begin{equation*}
\xi  =  - \frac{{\nabla u\left( {\nabla {h_{{K^t}}}\left( \xi  \right),t} \right)}}{{\left| {\nabla u\left( {\nabla {h_{{K^t}}}\left( \xi  \right),t} \right)} \right|}},
\end{equation*}
then
$\left| {\nabla u\left( {\nabla {h_{{K^t}}}\left( \xi  \right),t} \right)} \right| =  - \left\langle {\nabla u\left( {\nabla {h_{{K^t}}}\left( \xi  \right),t} \right),\xi } \right\rangle$,
and we have the following calculation
\begin{equation*}
\begin{split}
&{\left. {\frac{d}{{dt}}} \right|_{t = 0}}\left| {\nabla u\left( {\nabla {h_{{K^t}}}\left( \xi  \right),t} \right)} \right|\\
=&-{\left. {\frac{d}{{dt}}} \right|_{t = 0}}\left\langle {\nabla u\left( {\nabla {h_{{K^t}}}\left( \xi  \right),t} \right),\xi } \right\rangle \\
=&-\left( {\left\langle {{D^2}u\left( {\nabla {h_K}\left( \xi  \right)} \right){{\left. {\frac{d}{{dt}}} \right|}_{t = 0}}\nabla {h_{{K^t}}}\left( \xi  \right),\xi } \right\rangle  + \left\langle {\nabla \dot u\left( {\nabla {h_K}\left( \xi  \right)} \right),\xi } \right\rangle } \right)\\
=&-\left( {\left\langle {{D^2}u\left( {\nabla {h_K}\left( \xi  \right)} \right)\nabla \left( {{{\left. {\frac{d}{{dt}}} \right|}_{t = 0}}{{\left( {h_K^q + th_L^q} \right)}^{\frac{1}{q}}}} \right),\xi } \right\rangle  + \left\langle {\nabla \dot u\left( {\nabla {h_K}\left( \xi  \right)} \right),\xi } \right\rangle } \right)\\
=&-\left( {\left\langle {{D^2}u\left( {\nabla {h_K}\left( \xi  \right)} \right)\nabla \left( {\frac{1}{q}h_K^{1 - q}h_L^q} \right),\xi } \right\rangle  + \left\langle {\nabla \dot u\left( {\nabla {h_K}\left( \xi  \right)} \right),\xi } \right\rangle } \right)\\
=&-\left( {{J_1} + {J_2}} \right),
\end{split}
\end{equation*}
$S_K$-almost everywhere.
Since
$$
\nabla {h_K}\left( \xi  \right) = {h_K}\left( \xi  \right)\xi  + \sum\limits_{i = 1}^{n - 1} {{\bigtriangledown _i}{h_K}\left( \xi  \right){e^i}}
$$
and
$$
\nabla {h_L}\left( \xi  \right) = {h_L}\left( \xi  \right)\xi  + \sum\limits_{i = 1}^{n - 1} {{\bigtriangledown _i}{h_L}\left( \xi  \right){e^i}},
$$
we have
\begin{equation}\label{3.13}
\begin{split}
\nabla \left( {\frac{1}{q}h_K^{1-q}(\xi)h_L^q(\xi)} \right)
=\left( {\frac{1}{q}h_K^{1-q}(\xi)h_L^q(\xi)} \right)\xi
+\sum\limits_{i=1}^{n-1} {{\bigtriangledown_i}
\left({\frac{1}{q}h_K^{1-q}(\xi)h_L^q(\xi)}\right){e^i}}.
\end{split}
\end{equation}
This, together with Lemma \ref{lem:2.2}, yields that
\begin{equation*}
\begin{split}
{J_1}=&\left\langle {{D^2}u\left( {\nabla {h_K}\left( \xi  \right)} \right)\nabla
\left( {\frac{1}{q}h_K^{1-q}h_L^q} \right),\xi } \right\rangle\\
=&\left\langle {{D^2}u\left( {\nabla {h_K}\left( \xi  \right)} \right)\xi ,\xi } \right\rangle \left( {\frac{1}{q}h_K^{1 - q}h_L^q} \right)
+\sum\limits_{i = 1}^{n - 1} {\left\langle {{D^2}u\left( {\nabla {h_K}\left(\xi\right)} \right){e^i},\xi} \right\rangle {\bigtriangledown _i}
\left( {\frac{1}{q}h_K^{1 - q}h_L^q} \right)} \\
=&\frac{1}{{\mathbf{p}-1}}\kappa \left( {\nabla {h_K}\left( \xi  \right)} \right)
\left|{\nabla u\left( {\nabla {h_K}\left( \xi  \right)} \right)} \right|
{\rm{Tr}}\left( {C\left[ {{\bigtriangledown ^2}{h_K}+{h_K}{\mathbb I}} \right]} \right)
\left( {\frac{1}{q}h_K^{1 - q}h_L^q} \right)\\
&-\sum\limits_{i = 1}^{n - 1} {\kappa \left( {\nabla {h_K}\left(\xi\right)} \right)\sum\limits_{j = 1}^{n - 1} {{C_{i,j}}\left[ {{\bigtriangledown ^2}{h_K}
+ {h_K}{\mathbb I}} \right]} {\bigtriangledown _j}
\left( {\left| {\nabla u\left( {\nabla {h_K}\left( \xi  \right)} \right)} \right|} \right){\bigtriangledown _i}\left( {\frac{1}{q}h_K^{1 - q}h_L^q} \right)} \\
=&\frac{1}{{\mathbf{p}-1}}\kappa \left( {\nabla {h_K}\left( \xi  \right)} \right)
\left| {\nabla u\left( {\nabla {h_K}\left( \xi  \right)} \right)} \right|
{\rm{Tr}}\left( {C\left[ {{\bigtriangledown ^2}{h_K}+{h_K}{\mathbb I}} \right]} \right)
\left( {\frac{1}{q}h_K^{1 - q}h_L^q} \right)\\
&-\kappa \left( {\nabla {h_K}\left(\xi\right)} \right)\sum\limits_{i,j = 1}^{n - 1} {{C_{i,j}}\left[ {{\bigtriangledown ^2}{h_K} + {h_K}{\mathbb I}} \right]} {\bigtriangledown _j}
\left( {\left| {\nabla u\left( {\nabla {h_K}
\left( \xi  \right)} \right)} \right|} \right){\bigtriangledown _i}
\left( {\frac{1}{q}h_K^{1 - q}h_L^q} \right).
\end{split}
\end{equation*}	
Then, using $\sum\limits_{j=1}^{n-1} {{\bigtriangledown _j}{C_{i,j}}\left[ {{\bigtriangledown ^2}{h_K} + {h_K}{\mathbb I}} \right]}=0$ (cf. (4.3) of \cite{CY76}), we have
\begin{equation*}
\begin{split}
{J_1} =& \frac{1}{{\mathbf{p}-1}}\kappa \left( {\nabla {h_K}\left( \xi  \right)} \right)\left| {\nabla u\left( {\nabla {h_K}\left( \xi  \right)} \right)} \right|{\rm{Tr}}\left( {C\left[ {{\bigtriangledown ^2}{h_K} + {h_K}{\mathbb I}} \right]} \right)\left( {\frac{1}{q}h_K^{1 - q}h_L^q} \right)\\
&- \kappa \left( {\nabla {h_K}\left( \xi  \right)} \right)\sum\limits_{i,j = 1}^{n - 1} {{\bigtriangledown _j}\left( {{C_{i,j}}\left[ {{\bigtriangledown ^2}{h_K} + {h_K}{\mathbb I}} \right]{\left| {\nabla u\left( {\nabla {h_K}\left( \xi  \right)} \right)} \right|}} \right){\bigtriangledown _i}\left( {\frac{1}{q}h_K^{1 - q}h_L^q} \right)}.
\end{split}
\end{equation*}
Hence,
\begin{equation}\label{3.14}
\begin{split}
&{\left. {\frac{d}{{dt}}} \right|_{t = 0}}\left| {\nabla u\left( {\nabla {h_{{K^t}}}\left( \xi  \right),t} \right)} \right| \\
=&\kappa \left( {\nabla {h_K}\left( \xi  \right)} \right)\sum\limits_{i,j = 1}^{n - 1} {{\bigtriangledown _j}\left( {{C_{i,j}}\left[ {{\bigtriangledown ^2}{h_K} + {h_K}{\mathbb I}} \right]\left( {\left| {\nabla u\left( {\nabla {h_K}\left( \xi  \right)} \right)} \right|} \right)} \right){\bigtriangledown _i}\left( {\frac{1}{q}h_K^{1 - q}h_L^q} \right)} \\
&-\frac{1}{{\mathbf{p}-1}}\kappa \left( {\nabla {h_K}\left( \xi  \right)} \right)\left| {\nabla u\left( {\nabla {h_K}\left( \xi  \right)} \right)} \right|{\rm{Tr}}\left( {C\left[ {{\bigtriangledown ^2}{h_K} + {h_K}{\mathbb I}} \right]} \right)\left( {\frac{1}{q}h_K^{1 - q}h_L^q} \right) \\
&-\left\langle {\nabla \dot u\left( {\nabla {h_K}\left( \xi  \right)} \right),\xi } \right\rangle,
\end{split}
\end{equation}
$S_K$-almost everywhere.

Applying \eqref{2.7} and substituting both \eqref{3.14} and \eqref{3.12} into \eqref{3.11},
we obtain that
\begin{equation*}
\begin{split}
&{\left. {\frac{d}{{dt}}} \right|_{t = 0}}\mathcal{F}\left[ {{h_{{K^t}}}} \right]\left( \xi\right)\\
=&\left( {\mathbf{p}-1} \right){\left| {\nabla u\left( {\nabla {h_K}\left( \xi  \right)} \right)} \right|^{\mathbf{p}-2}}\sum\limits_{i,j = 1}^{n - 1} {{\bigtriangledown _j}\left( {{C_{i,j}}\left[ {{\bigtriangledown ^2}{h_K} + {h_K}{\mathbb I}} \right]\left( {\left| {\nabla u\left( {\nabla {h_K}\left( \xi  \right)} \right)} \right|} \right)} \right){\bigtriangledown _i}\left( {\frac{1}{q}h_K^{1 - q}h_L^q} \right)}\\
&-{\left| {\nabla u\left( {\nabla {h_K}\left( \xi  \right)} \right)} \right|^{\mathbf{p}-1}}{\rm{Tr}}\left( {C\left[ {{\bigtriangledown ^2}{h_K} + {h_K}{\mathbb I}} \right]} \right)\left( {\frac{1}{q}h_K^{1 - q}h_L^q} \right) \\
&-\left( {\mathbf{p}-1} \right)\frac{{{{\left| {\nabla u\left( {\nabla {h_K}\left( \xi  \right)} \right)} \right|}^{\mathbf{p}-2}}}}{{\kappa \left( {\nabla {h_K}\left( \xi  \right)} \right)}}\left\langle {\nabla \dot u\left( {\nabla {h_K}\left( \xi  \right)} \right),\xi } \right\rangle \\
&+{\left| {\nabla u\left( {\nabla {h_K}\left( \xi  \right)} \right)} \right|^{\mathbf{p}-1}}{\rm{Tr}}\left( {C\left[ {{\bigtriangledown ^2}{h_K} + {h_K}{\mathbb I}} \right]\left( {{\bigtriangledown ^2}\left( {\frac{1}{q}h_K^{1 - q}h_L^q} \right) + \left( {\frac{1}{q}h_K^{1 - q}h_L^q} \right){\mathbb I}} \right)} \right)\\
=&\left( {\mathbf{p}-1} \right){\left| {\nabla u\left( {\nabla {h_K}\left( \xi  \right)} \right)} \right|^{\mathbf{p}-2}}\sum\limits_{i,j = 1}^{n - 1} {{\bigtriangledown _j}\left( {{C_{i,j}}\left[ {{\bigtriangledown ^2}{h_K} + {h_K}{\mathbb I}} \right]\left( {\left| {\nabla u\left( {\nabla {h_K}\left( \xi  \right)} \right)} \right|} \right)} \right){\bigtriangledown _i}\left( {\frac{1}{q}h_K^{1 - q}h_L^q} \right)} \\
&-\left( {\mathbf{p}-1} \right)\frac{{{{\left| {\nabla u\left( {\nabla {h_K}\left( \xi  \right)} \right)} \right|}^{\mathbf{p}-2}}}}{{\kappa \left( {\nabla {h_K}\left( \xi  \right)} \right)}}\left\langle {\nabla \dot u\left( {\nabla {h_K}\left( \xi  \right)} \right),\xi } \right\rangle \\
&+ {\left| {\nabla u\left( {\nabla {h_K}\left( \xi  \right)} \right)} \right|^{\mathbf{p}-1}}{\rm{Tr}}\left( {C\left[ {{\bigtriangledown ^2}{h_K} + {h_K}{\mathbb I}} \right]\left( {{\bigtriangledown ^2}\left( {\frac{1}{q}h_K^{1 - q}h_L^q} \right)} \right)} \right),
\end{split}
\end{equation*}
$S_K$-almost everywhere. Since
\begin{equation*}
\begin{split}
&\sum\limits_{i,j = 1}^{n - 1}{{\bigtriangledown _j}\left( {{C_{i,j}}\left[ {{\bigtriangledown ^2}{h_K} + {h_K}{\mathbb I}} \right]{{\left| {\nabla u\left( {\nabla {h_K}\left( \xi  \right)} \right)} \right|}^{\mathbf{p}-1}}{\bigtriangledown _i}\left( {\frac{1}{q}h_K^{1 - q}h_L^q} \right)} \right)}\\
=&\sum\limits_{i,j = 1}^{n - 1} {{\bigtriangledown _j}\left( {{C_{i,j}}\left[ {{\bigtriangledown ^2}{h_K} + {h_K}{\mathbb I}} \right]{{\left| {\nabla u\left( {\nabla {h_K}\left( \xi  \right)} \right)} \right|}^{\mathbf{p}-1}}} \right)} {\bigtriangledown _i}\left( {\frac{1}{q}h_K^{1 - q}h_L^q} \right)\\
&+\sum\limits_{i,j = 1}^{n - 1} {{C_{i,j}}\left[ {{\bigtriangledown ^2}{h_K} + {h_K}{\mathbb I}} \right]{{\left| {\nabla u\left( {\nabla {h_K}\left( \xi  \right)} \right)} \right|}^{\mathbf{p}-1}}} {\bigtriangledown _{j,i}}\left( {\frac{1}{q}h_K^{1 - q}h_L^q} \right)\\
=&\left( {\mathbf{p}-1} \right){\left| {\nabla u\left( {\nabla {h_K}\left( \xi  \right)} \right)} \right|^{\mathbf{p}-2}}\sum\limits_{i,j = 1}^{n - 1} {{\bigtriangledown _j}\left( {{C_{i,j}}\left[ {{\bigtriangledown ^2}{h_K} + {h_K}{\mathbb I}} \right]\left( {\left| {\nabla u\left( {\nabla {h_K}\left( \xi  \right)} \right)} \right|} \right)} \right){\bigtriangledown _i}\left( {\frac{1}{q}h_K^{1 - q}h_L^q} \right)}\\
&+ {\left| {\nabla u\left( {\nabla {h_K}\left( \xi  \right)} \right)} \right|^{\mathbf{p}-1}}{\rm{Tr}}\left( {C\left[ {{\bigtriangledown ^2}{h_K} + {h_K}{\mathbb I}} \right]{\bigtriangledown ^2}\left( {\frac{1}{q}h_K^{1 - q}h_L^q} \right)} \right).
\end{split}
\end{equation*}
 Hence,
\begin{equation*}
\begin{split}
{\left. {\frac{d}{{dt}}} \right|_{t = 0}}\mathcal{F}\left[ {{h_{{K^t}}}} \right]\left( \xi  \right)
=& \sum\limits_{i,j = 1}^{n - 1}
{{\bigtriangledown _j}\left( {{C_{i,j}}\left[ {{\bigtriangledown ^2}{h_K} + {h_K}{\mathbb I}} \right]{{\left| {\nabla u\left( {\nabla {h_K}\left( \xi  \right)} \right)} \right|}^{\mathbf{p}-1}}{\bigtriangledown _i}\left( {\frac{1}{q}h_K^{1 - q}h_L^q} \right)} \right)} \\
&-\left( {\mathbf{p}-1} \right)\frac{{{{\left| {\nabla u\left( {\nabla {h_K}\left( \xi  \right)} \right)} \right|}^{\mathbf{p}-2}}}}{{\kappa \left( {\nabla {h_K}\left( \xi  \right)} \right)}}\left\langle {\nabla \dot u\left( {\nabla {h_K}\left( \xi  \right)} \right),\xi } \right\rangle,
\end{split}
\end{equation*}
$S_K$-almost everywhere.
\end{proof}

Lemmas \ref{lem:3.2} and \ref{lem:3.3} can be employed to prove the following result.

\begin{lemma}\label{lem:3.4}
Let $1<\mathbf{p}<\infty$ and $q>0$, and let $K, L\in \mathcal{A}_+^{2,\alpha}$
be two compact convex sets containing the origin.
Then, for the Wulff shape $K^t$ with $\left| t \right|\le\tau$ (where $\tau$ is given in \eqref{3.4}), we have
\begin{equation}\label{3.15}
\int_{{\mathbb{S}^{n-1}}} {{h_K}{{\left. {\frac{d}{{dt}}} \right|}_{t = 0}}\mathcal{F}\left[ {{h_{{K^t}}}} \right]\left( \xi  \right)} d\xi  = \int_{{\mathbb{S}^{n-1}}} {h_K^{1 - q}h_L^q{{\left. {\frac{d}{{dt}}} \right|}_{t = 0}}\mathcal{F}\left[ {{{\left( {1 + t} \right)}^{\frac{1}{q}}}{h_K}} \right]\left( \xi  \right)} d\xi.
\end{equation}
\end{lemma}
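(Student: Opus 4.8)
The plan is to bring both integrands into the same shape with the help of the facts already established, and then to match them term by term. By Lemma \ref{lem:3.1}, $\mathcal{F}\left[(1+t)^{1/q}h_K\right](\xi)=(1+t)^{(n-\mathbf{p})/q}\mathcal{F}\left[h_K\right](\xi)$ for $\left|t\right|\le\tau$, so ${\left.\frac{d}{dt}\right|_{t=0}}\mathcal{F}\left[(1+t)^{1/q}h_K\right](\xi)=\frac{n-\mathbf{p}}{q}\mathcal{F}\left[h_K\right](\xi)$, and the right-hand side of \eqref{3.15} equals $\frac{n-\mathbf{p}}{q}\int_{\mathbb{S}^{n-1}}h_K^{1-q}h_L^q\,\mathcal{F}\left[h_K\right]\,d\xi$. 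Equivalently, since $(1+t)^{1/q}h_K$ is the support function of the $q$-sum of $K$ with itself, Lemma \ref{lem:3.3} applies to it with $L$ replaced by $K$; thus ${\left.\frac{d}{dt}\right|_{t=0}}\mathcal{F}\left[(1+t)^{1/q}h_K\right]$ has the same structure as ${\left.\frac{d}{dt}\right|_{t=0}}\mathcal{F}\left[h_{K^t}\right]$, with the ``direction function'' $\frac1q h_K^{1-q}h_L^q$ replaced everywhere by $\frac1q h_K^{1-q}h_K^q=\frac1q h_K$ and with $\dot u$ replaced by the corresponding variation, which by Lemma \ref{lem:3.2}(ii) equals $\left|\nabla u(x)\right|\cdot\frac1q h_K(g_K(x))$ on $\partial K$. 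I will work with this second description.

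Inserting the formula of Lemma \ref{lem:3.3} into both sides of \eqref{3.15}, each side becomes the sum of a \emph{cofactor--divergence} term of the shape $\int_{\mathbb{S}^{n-1}}w\sum_{i,j=1}^{n-1}\nabla_j\!\left(C_{i,j}\!\left[\nabla^2 h_K+h_K\mathbb{I}\right]\left|\nabla u(\nabla h_K(\xi))\right|^{\mathbf{p}-1}\nabla_i\varphi\right)d\xi$ and a term of the shape $-(\mathbf{p}-1)\int_{\mathbb{S}^{n-1}}w\left|\nabla u(\nabla h_K(\xi))\right|^{\mathbf{p}-2}\det\!\left(\nabla^2 h_K+h_K\mathbb{I}\right)\langle\nabla\dot u(\nabla h_K(\xi)),\xi\rangle\,d\xi$, with $(w,\varphi)=\left(h_K,\tfrac1q h_K^{1-q}h_L^q\right)$ on the left and $(w,\varphi)=\left(h_K^{1-q}h_L^q,\tfrac1q h_K\right)$ on the right. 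For the first term I integrate by parts on $\mathbb{S}^{n-1}$; since the cofactor matrix $C[\nabla^2 h_K+h_K\mathbb{I}]$ is symmetric (and has divergence-free rows, $\sum_j\nabla_j C_{i,j}[\nabla^2 h_K+h_K\mathbb{I}]=0$, the identity of \cite{CY76} already used in the proof of Lemma \ref{lem:3.3}), the term is a symmetric bilinear form in $(w,\varphi)$, and by bilinearity the two cofactor--divergence contributions — obtained by evaluating this form at $\left(h_K,\tfrac1q h_K^{1-q}h_L^q\right)$ and at $\left(h_K^{1-q}h_L^q,\tfrac1q h_K\right)$ respectively — coincide. (For $0<q<1$ the identities of Lemma \ref{lem:3.3} hold only $S_K$-almost everywhere, but as $K\in\mathcal{A}_+^{2,\alpha}$ this is $\mathcal{H}^{n-1}$-almost everywhere, so the integrations are unaffected.)

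It remains to match the two terms containing $\nabla\dot u$, which is the heart of the proof. Using \eqref{2.7} together with $\nabla h_K(g_K(x))=x$ on $\partial K$, the $\nabla\dot u$-term on the left equals $-(\mathbf{p}-1)\int_{\partial K}h_K(g_K(x))\left|\nabla u(x)\right|^{\mathbf{p}-2}\langle\nabla\dot u(x),g_K(x)\rangle\,d\mathcal{H}^{n-1}(x)$, and likewise on the right with $h_K(g_K(x))$ replaced by $h_K^{1-q}(g_K(x))h_L^q(g_K(x))$ and $\dot u$ by the $L=K$ variation. These surface integrals I then handle by applying the divergence theorem in $K\cap N$ to a suitable vector field built from $u$, $\dot u$ and the position vector $x$, in the spirit of the computation in the proof of Lemma \ref{lem:2.3}; the ingredients are the linearized $\mathbf{p}$-Laplace equation $\text{div}(A\nabla\dot u)=0$ in $K\cap N$ with $A=\left|\nabla u\right|^{\mathbf{p}-2}\!\left(\mathbb{I}+(\mathbf{p}-2)\left|\nabla u\right|^{-2}\nabla u\otimes\nabla u\right)$ symmetric, the equation $\text{div}(\left|\nabla u\right|^{\mathbf{p}-2}\nabla u)=0$, the relation $g_K=-\nabla u/\left|\nabla u\right|$ together with Lemma \ref{lem:3.2}(ii) on $\partial K$, and the boundary condition of \eqref{3.5}, which gives $\dot u=-\frac1q\langle\nabla u,x\rangle$ on $\partial N\cap K$ for both variations. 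I expect the delicate step — and the place where the specific choice \eqref{3.4} of $\tau$ and the boundary condition in \eqref{3.5} are genuinely needed — to be the accounting of the surface terms over $\partial N\cap K$: these must be shown to contribute equally on the two sides, so that they cancel in the difference, leaving exactly \eqref{3.15}. Together with the equality of the cofactor--divergence terms from the previous step, this completes the proof.
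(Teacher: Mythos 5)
Your handling of the cofactor--divergence term is the paper's own argument, just packaged more abstractly: integrating by parts on $\mathbb{S}^{n-1}$ and using the symmetry of the cofactor matrix makes that contribution a symmetric bilinear form in $(w,\varphi)$, so evaluating at $(h_K,\tfrac1q h_K^{1-q}h_L^q)$ and at $(h_K^{1-q}h_L^q,\tfrac1q h_K)$ gives equal values. The paper does exactly this through two applications of Stokes's theorem in the computation of $I_1$.

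For the $\nabla\dot u$ term your route diverges from the paper's and, as sketched, has a genuine gap. The paper does not invoke any divergence theorem in $K\cap N$ here: it stays on $\partial K$ (equivalently on $\mathbb{S}^{n-1}$), inserts $\dot u=|\nabla u|\cdot\frac1q(h_K^{1-q}h_L^q)\circ g_K$ from Lemma~\ref{lem:3.2}(ii), and then uses the product rule together with Euler's homogeneous-function theorem for $h_K$ and $h_L$, through \eqref{2.2} and \eqref{3.13}, to move the weight $h_K^{1-q}h_L^q$ outside the inner product and the factor $\frac1q h_K$ inside; the result is then identified with the $L=K$ contribution via Lemma~\ref{lem:3.3}. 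No vector field in $K\cap N$ appears anywhere.

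Your proposed alternative---Green's identity for $\operatorname{div}(A\nabla\cdot)$ in $K\cap N$---does not produce the required identity. On $\partial K$, since $g_K=-\nabla u/|\nabla u|$, one has $\langle A\nabla v,g_K\rangle=(\mathbf{p}-1)|\nabla u|^{\mathbf{p}-2}\langle\nabla v,g_K\rangle$, so the two $\nabla\dot u$-terms you need to match are
\begin{equation*}
\int_{\partial K}h_K(g_K)\,\langle A\nabla\dot u,g_K\rangle\,d\mathcal{H}^{n-1}
\quad\text{and}\quad
\int_{\partial K}h_K^{1-q}(g_K)\,h_L^q(g_K)\,\langle A\nabla\tilde{\dot u},g_K\rangle\,d\mathcal{H}^{n-1},
\end{equation*}
where $\tilde{\dot u}$ is the $L=K$ variation. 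But Green's second identity, $\int_{\partial(K\cap N)}\bigl(\tilde{\dot u}\langle A\nabla\dot u,\nu\rangle-\dot u\langle A\nabla\tilde{\dot u},\nu\rangle\bigr)\,d\mathcal{H}^{n-1}=0$, combined with the boundary values $\dot u=|\nabla u|\,\frac1q(h_K^{1-q}h_L^q)\circ g_K$ and $\tilde{\dot u}=|\nabla u|\,\frac1q h_K\circ g_K$ on $\partial K$, gives (modulo the $\partial N\cap K$ contributions)
\begin{equation*}
\int_{\partial K}|\nabla u|\,h_K(g_K)\,\langle A\nabla\dot u,g_K\rangle\,d\mathcal{H}^{n-1}
=\int_{\partial K}|\nabla u|\,h_K^{1-q}(g_K)\,h_L^q(g_K)\,\langle A\nabla\tilde{\dot u},g_K\rangle\,d\mathcal{H}^{n-1},
\end{equation*}
with a spurious extra weight $|\nabla u|$ on both sides that does not cancel. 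This is a genuinely different identity from the one you need. (There is also a secondary issue: the $\partial N\cap K$ terms reduce to $\int_{\partial N\cap K}\dot u\,\langle A\nabla(\dot u-\tilde{\dot u}),\nu\rangle\,d\mathcal{H}^{n-1}$, which does not vanish merely because $\dot u=\tilde{\dot u}$ on $\partial N\cap K$, since their conormal derivatives there are different.) You have not exhibited a vector field that absorbs the $|\nabla u|$ factor, and I do not see one; the paper's direct computation on $\partial K$, with no excursion into $K\cap N$, avoids this problem entirely and is the route you should take.
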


\begin{proof}
Since $K\in\mathcal{A}_+^{2,\alpha}$, by Lemma \ref{lem:3.3},
we have
\begin{equation}\label{3.16}
\begin{split}
&\int_{{\mathbb{S}^{n-1}}}{{h_K}{{\left. {\frac{d}{{dt}}} \right|}_{t = 0}}\mathcal{F}\left[ {{h_{{K^t}}}} \right]\left( \xi  \right)} d\xi \\
=&\int_{{\mathbb{S}^{n-1}}} {{h_K}\sum\limits_{i,j = 1}^{n - 1} {{\bigtriangledown _j}\left( {{C_{i,j}}\left[ {{\bigtriangledown ^2}{h_K} + {h_K}{\mathbb I}} \right]{{\left| {\nabla u\left( {\nabla {h_K}\left( \xi  \right)} \right)} \right|}^{\mathbf{p}-1}}{\bigtriangledown _i}\left( {\frac{1}{q}h_K^{1 - q}h_L^q} \right)} \right)} } d\xi \\
&-\int_{{\mathbb{S}^{n-1}}} {{h_K}\left( {\mathbf{p}-1} \right)\frac{{{{\left| {\nabla u\left( {\nabla {h_K}\left( \xi  \right)} \right)} \right|}^{\mathbf{p}-2}}}}{{\kappa \left( {\nabla {h_K}\left( \xi  \right)} \right)}}\left\langle {\nabla \dot u\left( {\nabla {h_K}\left( \xi  \right)} \right),\xi } \right\rangle } d\xi \\
=&I_1-I_2.
\end{split}
\end{equation}
Then, by repeatedly applying Stokes's theorem for a compact manifold without boundary, we can calculate the term $I_1$ as follows.
\begin{equation}\label{3.17}
\begin{split}
I_1
& = \int_{{\mathbb{S}^{n-1}}} {\sum\limits_{i,j = 1}^{n - 1} {{h_K}{\bigtriangledown _j}\left( {{C_{i,j}}\left[ {{\bigtriangledown ^2}{h_K} + {h_K}{\mathbb{I}}} \right]{{\left| {\nabla u\left( {\nabla {h_K}\left( \xi  \right)} \right)} \right|}^{\mathbf{p}-1}}{\bigtriangledown _i}\left( {\frac{1}{q}h_K^{1 - q}h_L^q} \right)} \right)} } d\xi \\
&=-\int_{{\mathbb{S}^{n-1}}} {\sum\limits_{i,j = 1}^{n - 1} {{C_{i,j}}\left[ {{\bigtriangledown^2}{h_K} + {h_K}{\mathbb{I}}} \right]{{\left| {\nabla u\left( {\nabla {h_K}\left( \xi  \right)} \right)} \right|}^{\mathbf{p}-1}}{\bigtriangledown _i}\left( {\frac{1}{q}h_K^{1 - q}h_L^q} \right){\bigtriangledown _j}} } {h_K}d\xi\\
&=\int_{{\mathbb{S}^{n-1}}} {\sum\limits_{i,j = 1}^{n - 1} {h_K^{1-q}h_L^q{\bigtriangledown _j}\left( {{C_{i,j}}\left[ {{\bigtriangledown ^2}{h_K} + {h_K}{\mathbb{I}}} \right]{{\left| {\nabla u\left( {\nabla {h_K}\left( \xi  \right)} \right)} \right|}^{\mathbf{p}-1}}{\bigtriangledown _i}\left( {\frac{1}{q}{h_K}} \right)} \right)} } d\xi.
\end{split}
\end{equation}
By using (ii) of Lemma \ref{lem:3.2}, along with the formulas \eqref{3.13} and \eqref{2.2}, we can calculate
\begin{equation*}
\begin{split}
\frac{1}{\mathbf{p}-1}{I_2}
=&\int_{{\mathbb{S}^{n-1}}} {{h_K}\frac{{{{\left| {\nabla u\left( {\nabla {h_K}\left( \xi  \right)} \right)} \right|}^{\mathbf{p}-2}}}}{{\kappa \left( {\nabla {h_K}\left( \xi  \right)} \right)}}\left\langle {\nabla \dot u\left( {\nabla {h_K}\left( \xi  \right)} \right),\xi } \right\rangle } d\xi \\
=&\int_{\partial K} {{{\left| {\nabla u} \right|}^{\mathbf{p}-2}}{h_K} \circ {g_K}\left\langle {\nabla \left( {\left| {\nabla u} \right|\left( {\frac{1}{q}{{\left( {{h_K} \circ {g_K}} \right)}^{1 - q}}{{\left( {{h_L} \circ {g_K}} \right)}^q}} \right)} \right),{g_K}} \right\rangle } d{\mathcal{H}^{n - 1}}\\
=&\int_{\partial K} {{{\left| {\nabla u} \right|}^{\mathbf{p}-2}}{h_K} \circ {g_K}\left\langle {\nabla \left( {\left| {\nabla u} \right|} \right)\left( {\frac{1}{q}{{\left( {{h_K} \circ {g_K}} \right)}^{1 - q}}{{\left( {{h_L} \circ {g_K}} \right)}^q}} \right),{g_K}} \right\rangle } d{\mathcal{H}^{n - 1}}\\
&+\int_{\partial K} {{{\left| {\nabla u} \right|}^{\mathbf{p}-2}}{h_K} \circ {g_K}\left| {\nabla u} \right|\frac{1}{q}{{\left( {{h_K} \circ {g_K}} \right)}^{1 - q}}{{\left( {{h_L} \circ {g_K}} \right)}^q}} d{\mathcal{H}^{n - 1}}\\
=&\int_{\partial K} {{{\left| {\nabla u} \right|}^{\mathbf{p}-2}}{{\left( {{h_K} \circ {g_K}} \right)}^{1 - q}}{{\left( {{h_L} \circ {g_K}} \right)}^q}\left\langle {\nabla \left( {\left| {\nabla u} \right|} \right)\frac{1}{q}{h_K} \circ {g_K},{g_K}} \right\rangle } d{\mathcal{H}^{n - 1}}\\
&+\int_{\partial K} {{{\left| {\nabla u} \right|}^{\mathbf{p}-2}}{{\left( {{h_K} \circ {g_K}} \right)}^{1 - q}}{{\left( {{h_L} \circ {g_K}} \right)}^q}\left\langle {\left| {\nabla u} \right|\nabla \left( {\frac{1}{q}{h_K} \circ {g_K}} \right),{g_K}} \right\rangle } d{\mathcal{H}^{n - 1}}\\
=&\int_{{\mathbb{S}^{n-1}}} {h_K^{1 - q}h_L^q\frac{{{{\left| {\nabla u\left( {\nabla {h_K}\left( \xi  \right)} \right)} \right|}^{\mathbf{p}-2}}}}{{\kappa \left( {\nabla {h_K}\left( \xi  \right)} \right)}}\left\langle {\nabla \left( {\left| {\nabla u\left( {\nabla {h_K}\left( \xi  \right)} \right)} \right|\left( {\frac{1}{q}{h_K}} \right)} \right),\xi } \right\rangle } d\xi.
\end{split}
\end{equation*}
This, together with \eqref{3.17} and \eqref{3.16}, yields that
\begin{equation}\label{3.18}
\begin{split}
&\int_{{\mathbb{S}^{n-1}}}{{h_K}{{\left. {\frac{d}{{dt}}} \right|}_{t = 0}}\mathcal{F}\left[ {{h_{{K^t}}}} \right]\left( \xi  \right)} d\xi\\
=& \int_{{\mathbb{S}^{n-1}}} {h_K^{1 - q}h_L^q\sum\limits_{i,j = 1}^{n - 1} {{\bigtriangledown _j}\left( {{C_{i,j}}\left[ {{\bigtriangledown ^2}{h_K} + {h_K}{\mathbb{I}}} \right]{{\left| {\nabla u\left( {\nabla {h_K}\left( \xi  \right)} \right)} \right|}^{\mathbf{p}-1}}{\bigtriangledown _i}\left( {\frac{1}{q}{h_K}} \right)} \right)} d\xi } \\
&- \left( {\mathbf{p}-1} \right)\int_{{\mathbb{S}^{n-1}}} {h_K^{1 - q}h_L^q\frac{{{{\left| {\nabla u\left( {\nabla {h_K}\left( \xi  \right)} \right)} \right|}^{\mathbf{p}-2}}}}{{\kappa \left( {\nabla {h_K}\left( \xi  \right)} \right)}}\left\langle {\nabla \left( {\left| {\nabla u\left( {\nabla {h_K}\left( \xi  \right)} \right)} \right|\left( {\frac{1}{q}{h_K}} \right)} \right),\xi } \right\rangle } d\xi.
\end{split}
\end{equation}

On the other hand, by Lemma \ref{lem:3.2} and Lemma \ref{lem:3.3} with $L=K$, we have
\begin{equation*}
\begin{split}
&{\left. {\frac{d}{{dt}}} \right|_{t = 0}}{\cal F}\left[ {{{\left( {1 + t} \right)}^{\frac{1}{q}}}{h_K}} \right]\left( \xi  \right)\\
=& \sum\limits_{i,j = 1}^{n - 1} {{\bigtriangledown _j}\left( {{C_{i,j}}\left[ {{\bigtriangledown ^2}{h_K} + {h_K}{\mathbb{I}}} \right]{{\left| {\nabla u\left( {\nabla {h_K}\left( \xi  \right)} \right)} \right|}^{\mathbf{p}-1}}{\bigtriangledown _i}\left( {\frac{1}{q}h_K} \right)} \right)} \\
&-\left( {\mathbf{p}-1} \right)\frac{{{{\left| {\nabla u\left( {\nabla {h_K}\left( \xi  \right)} \right)} \right|}^{\mathbf{p}-2}}}}{{\kappa \left( {\nabla {h_K}\left( \xi  \right)} \right)}}\left\langle {\nabla \left( {\left| {\nabla u\left( {\nabla {h_K}\left( \xi  \right)} \right)} \right|\left( {\frac{1}{q}{h_K}} \right)} \right),\xi } \right\rangle,
\end{split}
\end{equation*}
for $q\ge1$. Note that the above equality holds almost everywhere with respect to
$S_K$ if $0<q<1$, then by substituting it into \eqref{3.18}, we can obtain \eqref{3.15}.
\end{proof}

Now, the main result of this section can be stated as follows.

\begin{theorem}\label{th:3.1}
Let $1<\mathbf{p}<\infty$, $q>0$, $K\in\mathcal K_o^n$ and $L\subset \mathbb{R}^{n}$ be a compact convex set containing the origin.
Then, for the Wulff shape $K^t$ with $\left| t \right|\le\tau$ (where $\tau$ is given in \eqref{3.4}), we have
\begin{equation}\label{3.19}
\begin{split}
{\left. {\frac{d}{{dt}}} \right|_{t = 0}}\Gamma \left( {{K^t}} \right) = \frac{{n-\mathbf{p} + 1}}{q}\int_{{\mathbb{S}^{n-1}}}
{h_L^q\left( \xi  \right)h_K^{1 - q}\left( \xi  \right)} d\mu_{K}\left(\xi\right) .
\end{split}
\end{equation}
\end{theorem}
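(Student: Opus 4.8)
The plan is to differentiate the expression \eqref{3.7} for $\Gamma(K^t)$ under the integral sign, split the result into two pieces, and identify each piece using the lemmas already established. Writing $\Gamma(K^t)=\int_{\mathbb{S}^{n-1}}h_{K^t}(\xi)\,\mathcal{F}[h_{K^t}](\xi)\,d\xi$, the product rule gives
\[
{\left.\frac{d}{dt}\right|_{t=0}}\Gamma(K^t)
=\int_{\mathbb{S}^{n-1}}\left({\left.\frac{d}{dt}\right|_{t=0}}h_{K^t}(\xi)\right)\mathcal{F}[h_K](\xi)\,d\xi
+\int_{\mathbb{S}^{n-1}}h_K(\xi)\,{\left.\frac{d}{dt}\right|_{t=0}}\mathcal{F}[h_{K^t}](\xi)\,d\xi.
\]
For the first integral, since $h_{K^t}^q=h_K^q+th_L^q$ holds $S_{K^t}$-a.e.\ and (because $K^t\in\mathcal{A}_+^{2,\alpha}$) the exceptional set is $\mathcal{H}^{n-1}$-null, we have ${\left.\frac{d}{dt}\right|_{t=0}}h_{K^t}=\frac1q h_K^{1-q}h_L^q$, so the first term equals $\frac1q\int_{\mathbb{S}^{n-1}}h_K^{1-q}h_L^q\,\mathcal{F}[h_K]\,d\xi=\frac1q\int_{\mathbb{S}^{n-1}}h_L^q h_K^{1-q}\,d\mu_K$. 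For the second integral I invoke Lemma \ref{lem:3.4}, which rewrites $\int h_K\,{\left.\frac{d}{dt}\right|_{t=0}}\mathcal{F}[h_{K^t}]\,d\xi$ as $\int h_K^{1-q}h_L^q\,{\left.\frac{d}{dt}\right|_{t=0}}\mathcal{F}[(1+t)^{1/q}h_K]\,d\xi$; then by Lemma \ref{lem:3.1} one has $\mathcal{F}[(1+t)^{1/q}h_K]=(1+t)^{(n-\mathbf{p})/q}\mathcal{F}[h_K]$, hence ${\left.\frac{d}{dt}\right|_{t=0}}\mathcal{F}[(1+t)^{1/q}h_K]=\frac{n-\mathbf{p}}{q}\mathcal{F}[h_K]$, and the second term becomes $\frac{n-\mathbf{p}}{q}\int_{\mathbb{S}^{n-1}}h_L^q h_K^{1-q}\,d\mu_K$. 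Adding the two contributions yields $\frac{1+(n-\mathbf{p})}{q}\int h_L^q h_K^{1-q}\,d\mu_K=\frac{n-\mathbf{p}+1}{q}\int h_L^q h_K^{1-q}\,d\mu_K$, which is \eqref{3.19}.

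Before carrying this out I would first reduce to the case $K,L\in\mathcal{A}_+^{2,\alpha}$: the statement of Theorem \ref{th:3.1} allows $L$ to be merely a compact convex set containing the origin, but Lemmas \ref{lem:3.1}--\ref{lem:3.4} are stated for $K,L\in\mathcal{A}_+^{2,\alpha}$. I would approximate a general $L$ by a sequence $L_j\in\mathcal{A}_+^{2,\alpha}$ with $L_j\to L$ (using Theorem 2.46 of \cite{AM24}), note that both sides of \eqref{3.19} are continuous in $L$ — the right side because $h_{L_j}^q\to h_L^q$ uniformly and $\mu_K$ is a fixed finite measure, the left side because one can check $\Gamma(\cdot)$ varies continuously along the $q$-sum deformation — and pass to the limit. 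A point requiring a little care is the legitimacy of differentiating under the integral sign in the first display: this is justified because, for $|t|\le\tau$, the integrands and their $t$-derivatives are uniformly bounded (Lemma \ref{lem:2.1} bounds $|\nabla u|$ independently of the body, the Hessian terms $\det(\nabla^2 h_{K^t}+h_{K^t}\mathbb{I})$ vary smoothly in $t$ on the compact parameter interval since $K^t\in\mathcal{A}_+^{2,\alpha}$, and Lemma \ref{lem:3.2}(i) gives the needed differentiability of $u(x,t)$ with $\dot u\in C^{2,\beta}$).

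The main obstacle I anticipate is not the final assembly — which is essentially the two-line computation above — but correctly bookkeeping the ``$S_K$-almost everywhere'' qualifiers that pervade Lemmas \ref{lem:3.2}, \ref{lem:3.3}, \ref{lem:3.4} when $0<q<1$, and verifying that these null sets do not affect the integrals. Since $K\in\mathcal{A}_+^{2,\alpha}$, the surface area measure $S_K$ is $\frac{1}{\kappa\circ g_K^{-1}}\,\mathcal{H}^{n-1}\llcorner_{\mathbb{S}^{n-1}}$ with $\kappa>0$, so an $S_K$-null set is $\mathcal{H}^{n-1}$-null and hence negligible for all the $d\xi$- and $d\mu_K$-integrals appearing; I would state this reduction once at the outset. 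The only other thing to watch is that Lemma \ref{lem:3.4} is proved for $q>0$ generally, so no separate treatment of $q\ge 1$ versus $0<q<1$ is needed at the level of Theorem \ref{th:3.1} once the a.e.\ issue is dispatched. With that in hand the proof is short.
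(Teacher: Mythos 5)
Your proposal is correct and follows essentially the same route as the paper: differentiate \eqref{3.7} by the product rule (using that the exceptional set $\omega_t$ is $\mathcal{H}^{n-1}$-null so that $h_{K^t}=(h_K^q+th_L^q)^{1/q}$ may be used in the integral), identify the first term directly as $\tfrac1q\int h_L^q h_K^{1-q}\,d\mu_K$, handle the second term via Lemma \ref{lem:3.4} followed by Lemma \ref{lem:3.1}, and then remove the $\mathcal{A}_+^{2,\alpha}$ restriction by the approximation and weak-convergence facts \eqref{2.3} and \eqref{2.8}. The only difference is cosmetic: you spell out the justification for differentiating under the integral sign and the a.e.\ bookkeeping a bit more explicitly than the paper does.
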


\begin{proof}
Let $K\in\mathcal K_o^n$ and $L\subset \mathbb{R}^{n}$ be a compact convex set containing the origin. We first prove the case that $K, L\in \mathcal{A}_+^{2,\alpha}$.
Then, by formula \eqref{3.7} and Lemmas \ref{lem:3.4} and \ref{lem:3.1}, we have
\begin{equation*}
\begin{split}
&{\left. {\frac{d}{{dt}}} \right|_{t = 0}}\Gamma \left( {{K^t}} \right) \\
=& {\left. {\frac{d}{{dt}}} \right|_{t = 0}}\int_{{\mathbb{S}^{n-1}}} {{{\left( {h_K^q\left( \xi  \right) + th_L^q\left( \xi  \right)} \right)}^{\frac{1}{q}}}\mathcal{F} \left[ {{h_{{K^t}}}} \right]\left( \xi  \right)} d\xi \\
=& \int_{{\mathbb{S}^{n-1}}} {{{\left. {\frac{d}{{dt}}} \right|}_{t = 0}}{{\left( {h_K^q\left( \xi  \right) + th_L^q\left( \xi  \right)} \right)}^{\frac{1}{q}}}\mathcal{F} \left[ {{h_{{K^t}}}} \right]\left( \xi  \right)} d\xi  + \int_{{\mathbb{S}^{n-1}}} {{h_K}\left( \xi  \right){{\left. {\frac{d}{{dt}}} \right|}_{t = 0}}\mathcal{F} \left[ {{h_{{K^t}}}} \right]\left( \xi  \right)} d\xi \\
=& \frac{1}{q}\int_{{\mathbb{S}^{n-1}}}
{h_L^qh_K^{1-q}\mathcal{F} \left[ {{h_K}} \right]\left( \xi  \right)} d\xi
+\int_{{\mathbb{S}^{n-1}}} {h_L^q h_K^{1 - q}{{\left. {\frac{d}{{dt}}} \right|}_{t = 0}}\mathcal{F} \left[ {{{\left( {1 + t} \right)}^{\frac{1}{q}}}{h_K}} \right]\left( \xi  \right)} d\xi \\
=&\frac{1}{q}\int_{{\mathbb{S}^{n-1}}}
{h_L^qh_K^{1-q}\mathcal{F} \left[ {{h_K}} \right]\left( \xi  \right)} d\xi
+\frac{{n-\mathbf{p}}}{q}\int_{{\mathbb{S}^{n-1}}}
{h_L^qh_K^{1-q}\mathcal{F}\left[ {{h_K}} \right]\left( \xi  \right)} d\xi \\
=&\frac{{n-\mathbf{p} + 1}}{q}\int_{{\mathbb{S}^{n-1}}} {h_L^qh_K^{1 - q}\mathcal{F} \left[ {{h_K}} \right]\left( \xi  \right)}d\xi \\
=&\frac{{n-\mathbf{p} + 1}}{q}\int_{{\mathbb{S}^{n-1}}}
{h_L^qh_K^{1-q}}d\mu_{K}.
\end{split}
\end{equation*}
This proves \eqref{3.19} for the case
that $K, L\in \mathcal{A}_+^{2,\alpha}$.

For $K\in\mathcal K_o^n$ and a compact convex set $L \subset \mathbb{R}^{n}$ containing the origin, we can respectively choose two sequences
$\{K_j\}_{j=1}^\infty$ and $\{L_j\}_{j=1}^\infty$ in $\mathcal{A}_+^{2,\alpha}$, such that $K_j\to K$ and $L_j\to L$ as $j\to\infty$. It follows that $h_{K_j}\to h_K$ and  $h_{L_j}\to h_L$
uniformly. Then, by \eqref{2.3}, the continuity of the functional $\Gamma$ on compact convex sets and the weak convergence \eqref{2.8}, we can verify the
desired \eqref{3.19}.
\end{proof}

In view of the variational formula \eqref{3.19}, one can generalize the $\mathbf{p}$-harmonic measure
and introduce the following $L_{q}~\mathbf{p}$-harmonic measure.

\begin{definition}\label{def:3.1}
Let $q\in\mathbb{R}$, $1<\mathbf{p}<\infty$, and $K\in\mathcal K_o^n$.
We define the $L_q$ $\mathbf{p}$-harmonic measure $\mu _{K,q}$ for each Borel
$E\subset \mathbb{S}^{n-1}$ as
\begin{equation}\nonumber
{\mu _{K,q}}\left(E\right) = \int_E  {h_K^{1 - q}\left( \xi  \right)} d{\mu _K}\left(\xi\right).
\end{equation}
\end{definition}

The weak convergence of the $L_q$ $\mathbf{p}$-harmonic measure is critical and can be stated as follows.

\begin{lemma}\label{lem:3.5}
Let $q\in\mathbb{R}$, $1<\mathbf{p}<\infty$, and $K\in\mathcal K_o^n$.
Then for any sequence of convex bodies $\{K_j\}$ in $\mathcal K_o^n$,
if $K_{j}\to K$ as $j\to\infty$, then $\mu_{K_j,q}$ converges to $\mu_{K,q}$ weakly, as $j\to\infty$.
\end{lemma}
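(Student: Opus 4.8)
The plan is to reduce the statement to the already-established weak convergence \eqref{2.8} of the bare $\mathbf{p}$-harmonic measure. By definition of weak convergence it suffices to show that
\[
\int_{\mathbb{S}^{n-1}} f(\xi)\, d\mu_{K_j,q}(\xi) \longrightarrow \int_{\mathbb{S}^{n-1}} f(\xi)\, d\mu_{K,q}(\xi)
\qquad \text{for every } f\in C(\mathbb{S}^{n-1}).
\]
By Definition \ref{def:3.1} the left-hand integral equals $\int_{\mathbb{S}^{n-1}} f(\xi)\,h_{K_j}^{1-q}(\xi)\, d\mu_{K_j}(\xi)$, so \emph{both} the density $h_{K_j}^{1-q}$ and the underlying measure $\mu_{K_j}$ move with $j$; coping with these two variations simultaneously is the only real difficulty. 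First I would record the preliminary facts that tame the densities. Since $K_j\to K$ in the Hausdorff metric, the support functions converge uniformly, $h_{K_j}\to h_K$ on $\mathbb{S}^{n-1}$; and since $K\in\mathcal K_o^n$ there are constants $0<a\le b<\infty$ with $a\le h_K\le b$, whence $a/2\le h_{K_j}\le 2b$ on $\mathbb{S}^{n-1}$ for all large $j$. As $t\mapsto t^{1-q}$ is uniformly continuous on the compact interval $[a/2,2b]\subset(0,\infty)$ — here one uses $K\in\mathcal K_o^n$ crucially, since $q\in\mathbb{R}$ allows $1-q<0$ — it follows that $h_{K_j}^{1-q}\to h_K^{1-q}$ uniformly on $\mathbb{S}^{n-1}$; in particular $f\,h_K^{1-q}\in C(\mathbb{S}^{n-1})$ and $\mu_{K,q}$ is a finite Borel measure.

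Next I would split the difference in the usual add-and-subtract manner:
\begin{align*}
&\int_{\mathbb{S}^{n-1}} f\,h_{K_j}^{1-q}\, d\mu_{K_j} - \int_{\mathbb{S}^{n-1}} f\,h_K^{1-q}\, d\mu_K\\
={}& \int_{\mathbb{S}^{n-1}} f\,\bigl(h_{K_j}^{1-q}-h_K^{1-q}\bigr)\, d\mu_{K_j}
\;+\;\Bigl(\int_{\mathbb{S}^{n-1}} f\,h_K^{1-q}\, d\mu_{K_j} - \int_{\mathbb{S}^{n-1}} f\,h_K^{1-q}\, d\mu_K\Bigr).
\end{align*}
The second bracket tends to $0$ by \eqref{2.8} applied to the fixed continuous function $f\,h_K^{1-q}$. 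For the first term, taking the test function $\equiv 1$ in \eqref{2.8} gives $\mu_{K_j}(\mathbb{S}^{n-1})\to\mu_K(\mathbb{S}^{n-1})$, so $C:=\sup_j\mu_{K_j}(\mathbb{S}^{n-1})<\infty$, and then
\[
\Bigl|\int_{\mathbb{S}^{n-1}} f\,\bigl(h_{K_j}^{1-q}-h_K^{1-q}\bigr)\, d\mu_{K_j}\Bigr|
\le \|f\|_{C(\mathbb{S}^{n-1})}\,\bigl\|h_{K_j}^{1-q}-h_K^{1-q}\bigr\|_{C(\mathbb{S}^{n-1})}\,C\longrightarrow 0
\]
by the uniform convergence established above. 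Together these two estimates give the claim, and hence $\mu_{K_j,q}\to\mu_{K,q}$ weakly.

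As indicated, the main obstacle is that the integrand and the measure vary at the same time, and it is resolved by the splitting step combined with (i) the uniform positivity of $h_K$, which legitimizes the possibly negative power $1-q$ and yields uniform convergence of the densities, and (ii) the uniform total-mass bound extracted from \eqref{2.8} with test function $\equiv 1$ (alternatively one could invoke Lemma \ref{lem:2.1} together with boundedness of the boundaries). No smoothness of the bodies is needed, so the argument applies to all $K\in\mathcal K_o^n$ and does not rely on the $C^{2,\alpha}_+$ approximation of Section \ref{sect:2}.
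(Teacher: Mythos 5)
Your proof is correct and takes essentially the same route as the paper: reduce to the weak convergence \eqref{2.8} of the underlying $\mathbf{p}$-harmonic measure and exploit the uniform convergence $h_{K_j}\to h_K$ coming from Hausdorff convergence. The paper's own proof is a one-line assertion of the limit; your add-and-subtract splitting, the uniform lower/upper bounds on $h_{K_j}$ (needed because $1-q$ may be negative), and the uniform total-mass bound from \eqref{2.8} with test function $\equiv 1$ are exactly the details the paper leaves implicit.
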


\begin{proof}
It follows from \eqref{2.8} that the $\mathbf{p}$-harmonic measure is convergent weakly. Then, by Definition \ref{def:3.1} and  $K_{j}\to K$ as $j\to\infty$, for any function $f\in C\left(\mathbb{S}^{n-1}\right)$, we have
\begin{equation}\nonumber
\lim\limits_{j\to\infty}\int_{\mathbb{S}^{n-1}}fd\mu_{K_j,q}
=\lim\limits_{j\to\infty}
\int_{\mathbb{S}^{n-1}}fh_{K_j}^{1-q}d\mu_{K_j}
=\int_{\mathbb{S}^{n-1}}fh_K^{1-q}d\mu_K
=\int_{\mathbb{S}^{n-1}}fd\mu_{K,q}.
\end{equation}
Thus, the desired weak convergence follows.
\end{proof}

\section{The proof of Theorem \ref{th:1.1}}\label{sect:4}

In this section, we study the $L_q$ Minkowski problem associated with $\mathbf{p}$-harmonic measure for $0<q<1$
and $1<\mathbf{p}\ne n+1$. By introducing an appropriate functional
and studying a related extremal problem as well as the existence of a solution,
we can finally prove Theorem \ref{th:1.1} via the variation method.
To begin with, we prove the following lemma, which is critical for our later approximation argument.

\begin{lemma}\label{lem:4.1}
Let $0<q<1$. If $f:\mathbb{S}^{n-1}\to \mathbb{R}$ is a positive, twice continuously
differentiable function, there exists a convex body $L$ containing the origin in its interior and a constant $r>0$ such that
$$f^q=h_L^q-h^q_{rB_2^n},$$
where $B_2^n$ is the standard unit ball in $\mathbb{R}^n$.
\end{lemma}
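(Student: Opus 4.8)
The plan is to reduce the statement to a standard fact about Wulff shapes (stated earlier as \eqref{2.3} and the surrounding discussion) once we choose $r>0$ small enough. The target identity is $f^q = h_L^q - h_{rB_2^n}^q = h_L^q - r^q$ on $\mathbb{S}^{n-1}$, so we are forced to set
\[
g := \left(f^q + r^q\right)^{1/q}
\]
and define $L := K_g$, the Wulff shape of $g$. Everything then comes down to showing that for sufficiently small $r$ the function $g$ is the support function of a convex body containing the origin in its interior, i.e. that $h_L = g$ (not merely $h_L \le g$ with equality $S_L$-a.e.), which is what lets us conclude $f^q = h_L^q - r^q$ everywhere.

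First I would record that $f\in C^2(\mathbb{S}^{n-1})$ and $f>0$, so $f$ attains a positive minimum and finite $C^2$ norm on the compact sphere. The classical criterion (see Schneider \cite{S14}) is that a function $\varphi\in C^2(\mathbb{S}^{n-1})$ is the support function of a convex body with $C^2_+$ boundary precisely when the matrix $\bigtriangledown^2\varphi + \varphi\,\mathbb{I}$ is positive definite on $\mathbb{S}^{n-1}$; in that case $h_{K_\varphi}=\varphi$. So the key step is to compute $\bigtriangledown^2 g + g\,\mathbb{I}$ for $g=(f^q+r^q)^{1/q}$ and check positivity for small $r$. As $r\to 0$ one has $g\to f$ in $C^2$, hence $\bigtriangledown^2 g + g\,\mathbb{I} \to \bigtriangledown^2 f + f\,\mathbb{I}$ uniformly — but this limit need not be positive definite, since $f$ is an arbitrary positive $C^2$ function, not a support function. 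The correct comparison is instead with $h_{rB_2^n}=r$: writing $g^q = f^q + r^q$ and differentiating, the Hessian of $g$ splits into a term coming from $f$ and a term that, after using $h_{rB_2^n}=r$ and the fact that $\bigtriangledown^2 r + r\,\mathbb{I} = r\,\mathbb{I}$ is positive definite, dominates for small $r$. Concretely, the plan is to use the $q$-sum machinery already invoked in Section~\ref{sect:3}: $g^q = f^q + h_{rB_2^n}^q$, and one shows via a direct computation of $\bigtriangledown_{i,j}g$ in an orthonormal frame that
\[
\bigtriangledown^2 g + g\,\mathbb{I}
= \left(\tfrac{g}{f}\right)^{1-q}\!\left(\bigtriangledown^2 f + f\,\mathbb{I}\right)
\;+\; r^q\,g^{1-q}f^{-1}\,\mathbb{I}
\;+\; (1-q)\,g^{1-2q}f^{-q}\,\nabla_{\!S} f\otimes\nabla_{\!S} f \;+\;(\text{lower order}),
\]
where $\nabla_{\!S}f$ denotes the spherical gradient; since $r^q g^{1-q}f^{-1}\ge r^q (\min f)^{-q}$ is bounded below by a positive multiple of $r^q$ while the potentially negative contributions of $\bigtriangledown^2 f + f\,\mathbb{I}$ are bounded independently of $r$, the whole matrix is positive definite once $r$ is small enough depending only on $n$, $q$, $\min f$, $\max f$ and $\|f\|_{C^2}$. (I would not grind through the exact coefficients in the writeup; a clean way is to note $g\to f$ in $C^2$ as $r\to 0$ is the \emph{wrong} statement and instead keep the $r^q$ term explicit.)

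With positivity in hand, $L:=K_g$ is a convex body of class $C^2_+$ with $h_L = g$, and since $g \ge f > 0$ everywhere, $L$ contains a ball around the origin, so $o\in\operatorname{int}L$. Then $h_L^q - h_{rB_2^n}^q = g^q - r^q = f^q$ on all of $\mathbb{S}^{n-1}$, which is the claim. The main obstacle, and the only place requiring genuine care, is the Hessian positivity estimate: one must resist the temptation to let $r\to 0$ and instead exploit that the added $r^q\,\mathbb{I}$-type term in $\bigtriangledown^2 g + g\,\mathbb{I}$ is uniform over the sphere and of the same order $r^q$ as the perturbation of $g$ away from $f$, so it genuinely controls the indefinite Hessian of $f$; the $(1-q)$-weighted rank-one term is automatically nonnegative and only helps. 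Everything else — existence and basic properties of the Wulff shape, the identification $h_{K_g}=g$ under the curvature condition, and $o\in\operatorname{int}L$ — is standard and already cited in Section~\ref{sect:2}.
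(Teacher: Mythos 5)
Your overall strategy — define $g=(f^q+r^q)^{1/q}$, show $\bigtriangledown^2 g+g\,\mathbb I\succeq 0$ by exploiting the added $r^q$ term, and invoke Schneider's characterization of support functions — is the same as the paper's (the paper works with the $1$-homogeneous extension $F(x)=|x|f(x/|x|)$ and $G(x)=|x|$ and applies Euler's relation to kill the radial direction, but that is the same criterion in disguise). However, your proposal has two substantive errors.

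First, the direction of $r$ is backwards. You correctly observe that as $r\to 0$ one has $g\to f$ in $C^2$ and the limit $\bigtriangledown^2 f+f\,\mathbb I$ need not be positive semi-definite, yet you still conclude "the whole matrix is positive definite once $r$ is small enough." The opposite is true: the helpful term scales like $r^q$ relative to the $(\bigtriangledown^2 f+f\,\mathbb I)$ contribution, so positivity is obtained for $r$ \emph{large enough}, not small. This matches the paper, where the identity $yD^2_x(F^q+r^qG^q)y^\intercal = yD^2_x(F^q)y^\intercal + q r^q$ for $y\perp x$ is beaten into nonnegativity by taking $r^q \ge -\min_y yD_x^2(F^q)y^\intercal/q$.

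Second, the sign of the rank-one term in your displayed decomposition is wrong, and your claim that it "is automatically nonnegative and only helps" is false. A direct computation from $\bigtriangledown_i g = g^{1-q}f^{q-1}\bigtriangledown_i f$ gives
\begin{equation*}
\bigtriangledown^2 g+g\,\mathbb I
=\left(\tfrac{g}{f}\right)^{1-q}\!\left(\bigtriangledown^2 f+f\,\mathbb I\right)
+ r^q g^{1-q}\,\mathbb I
- (1-q)\,r^q\,g^{1-2q}f^{q-2}\,\nabla_{\!S}f\otimes\nabla_{\!S}f ,
\end{equation*}
with no remainder. For $0<q<1$ the coefficient of the rank-one block is strictly negative, so this term \emph{hurts} rather than helps; one must check that the $r^q g^{1-q}\mathbb I$ term dominates both the (possibly indefinite) first summand and this negative rank-one piece, which it does for $r$ large since $r^q g^{1-q}/\big(\tfrac{g}{f}\big)^{1-q} = r^q f^{1-q}\to\infty$ and $r^q g^{1-q}/\big(r^q g^{1-2q}f^{q-2}\big)=g^q f^{2-q}$ is bounded below. (Your proposed formula also carries a spurious $f^{-1}$ in the second term: testing with $f\equiv c$ shows it only balances when $c=1$.) Once these two points are corrected, your route is essentially equivalent to the paper's; the paper's choice to pass to the $1$-homogeneous extension and use the Euler relation is cleaner precisely because it reduces the check to the exactly computable quantity $yD^2_x(r^qG^q)y^\intercal = qr^q$ in perpendicular directions, sidestepping the rank-one bookkeeping altogether. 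Finally, note the lemma only needs $L$ to be a convex body with the origin in its interior, so positive \emph{semi}-definiteness (hence sublinearity, via Theorem 1.5.13 and 1.7.1 of Schneider) suffices; insisting on strict positivity and $C^2_+$ regularity is unnecessary.
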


\begin{proof}
We extend the function $f$ to $\mathbb{R}^{n}\setminus\left\{o\right\}$
by defining $F\left(x\right):=\left|x\right|f\left(\frac{x}{\left|x\right| }\right)$
and we define $G\left(x\right):=\left|x\right|$ for $x\in\mathbb{R}^n$. Then, we can verify that
the function $\left(F^q+r^qG^q\right)^{\frac{1}{q}}$ is positively homogeneous of degree one,
where $r>0$. According to Euler's homogeneous function theorem,
\[\left\langle {x,\nabla{{{\left( {{F^q} + {r^q}{G^q}} \right)}^{\frac{1}{q}}}}} \right\rangle
= {\left( {{F^q} + {r^q}{G^q}} \right)^{\frac{1}{q}}},\]
we then take the first derivative with respect to each component $x_j$ of $x$ and obtain
\[\sum\limits_{i = 1}^n {\left( {\frac{{\partial {x_i}}}{{\partial {x_j}}}
\frac{{\partial \left( {{{\left( {{F^q} + {r^q}{G^q}} \right)}^{\frac{1}{q}}}} \right)}}{{\partial {x_i}}}
+ {x_i}\frac{{{\partial ^2}\left( {{{\left( {{F^q} + {r^q}{G^q}} \right)}^{\frac{1}{q}}}} \right)}}
{{\partial {x_i}\partial {x_j}}}} \right)}
= \frac{{\partial \left( {{{\left( {{F^q}+{r^q}{G^q}} \right)}^{\frac{1}{q}}}} \right)}}{{\partial {x_j}}},\]
where $j=1,\ldots,n$. Thus, we have
\begin{equation}\label{4.1}
\sum\limits_{i = 1}^n {\left( {{x_i}\frac{{{\partial ^2}\left( {{{\left( {{F^q} + {r^q}{G^q}} \right)}^{\frac{1}{q}}}} \right)}}{{\partial {x_i}\partial {x_j}}}} \right)}
=0,
\end{equation}
for all $j=1,\ldots,n$. Let $D^2_{x}\left(\left(F^q+r^qG^q\right)^{\frac{1}{q}}\right)$
be the second differential of function ${F^q}+{r^q}{G^q}$ at $x$,
that is
\[D_x^2\left( {{{\left( {{F^q} + {r^q}{G^q}} \right)}^{\frac{1}{q}}}} \right)
= {\left( {\frac{{{\partial ^2}\left( {{{\left( {{F^q} + {r^q}{G^q}} \right)}^{\frac{1}{q}}}} \right)}}{{\partial {x_i}\partial {x_j}}}} \right)_{ij}}.\]
It follows from \eqref{4.1} that
\begin{equation}\label{4.2}
xD^2_{x}\left(\left(F^q+r^qG^q\right)^{\frac{1}{q}}\right)z^\intercal=0,
\end{equation}
where $z^\intercal$ is the transpose of $z\in\mathbb{R}^{n}$.

For any two vectors $x,y\in \mathbb{S}^{n-1}$ with $x\perp y$, we can verify
\[yD_x^2\left( {{F^q}+{r^q}{G^q}}\right)y^\intercal
=yD_x^2\left( {{F^q}} \right)y^\intercal+q{r^q}.\]
Since the second differential $D_x^2\left( {{F^q}} \right)$ of
function $F^q$ is continuous on $\mathbb{S}^{n-1}$,
and $yD_x^2\left({{F^q}} \right)y^\intercal$ has a minimum, we can choose a suitable $r>0$ so that
\begin{equation}\label{4.3}
yD_x^2\left( {{ {{F^q} + {r^q}{G^q}}}} \right)y^\intercal
\ge 0.
\end{equation}

Let $x\in \mathbb{S}^{n-1}$. Then for any nonzero $z\in\mathbb{R}^{n}$, there exists $\alpha_1,\alpha_2\in\mathbb{R}$
such that $z=\alpha_1x+\alpha_2x'$, where $x'\perp x$ and $x'\in \mathbb{S}^{n-1}$. Since
\begin{equation*}
\begin{split}
&D_x^2\left( {{{\left( {{F^q} + {r^q}{G^q}} \right)}^{\frac{1}{q}}}} \right)\\
=&\frac{1}{q}\left( {\frac{1}{q} - 1} \right){\left( {{F^q} + {r^q}{G^q}} \right)^{\frac{1}{q}-2}}|{\nabla}\left( {{F^q}+{r^q}{G^q}} \right)|^2\mathrm{I}
+\frac{1}{q}{\left( {{F^q} + {r^q}{G^q}} \right)^{\frac{1}{q} - 1}}D_x^2\left( {{F^q} + {r^q}{G^q}} \right),
\end{split}
\end{equation*}
where $\mathrm{I}$ is the unit matrix of order $n$.
This, together with \eqref{4.2} and \eqref{4.3}, shows that
\begin{equation}\nonumber
zD^2_{x}\left(\left(F^q+r^qG^q\right)^{\frac{1}{q}}\right)z^\intercal
\ge 0,
\end{equation}
for any nonzero $z\in\mathbb{R}^{n}$ and $x\in\mathbb{S}^{n-1}$.
It follows that the matrix
$D_x^2\left( {{{\left( {{F^q} + {r^q}{G^q}} \right)}^{\frac{1}{q}}}} \right)$ is positive semi-definite
for any nonzero $x\in\mathbb{R}^{n}$. Then, by Theorem 1.5.13 of \cite{S14},
we can verify that the function ${{{\left( {{F^q} + {r^q}{G^q}} \right)}^{\frac{1}{q}}}}$ is sublinear.
The existence of the convex body $L$ directly follows from Theorem 1.7.1 of \cite{S14}.
\end{proof}

Let $Q$ be a compact convex set, $\mu$ be a finite Borel measure on $\mathbb{S}^{n-1}$, and $0<q<1$.
We define the functional
$\Phi_Q:Q\to\mathbb{R}$ as follows:
\begin{equation}\label{4.4}
{\Phi_Q}\left(\zeta\right)=\int_{{\mathbb{S}^{n-1}}} {{{\left( {{h_Q}\left(\xi\right)-\left\langle {\zeta,\xi}\right\rangle}\right)}^{q}}}d\mu\left(\xi\right).
\end{equation}
Next, we proceed to prove two necessary lemmas concerning the functional $\Phi_Q$.

\begin{lemma}\label{lem:4.2}
Let $0<q<1$ and $Q$ be a compact convex set, there exists a unique
$\zeta\left(Q\right)\in \rm{int} Q$ such that
\[{\Phi_Q}\left( {\zeta \left( Q \right)}\right)
=\mathop {\sup }\limits_{\zeta\in Q} {\Phi_Q}
\left(\zeta\right),\]
and for any $x_0\in\mathbb{R}^{n}$, we have ${\zeta\left(Q+x_0\right)}={\zeta\left(Q\right)}+x_0$.
\end{lemma}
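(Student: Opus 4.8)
The plan is to argue that $\Phi_Q$ is a strictly concave function on the convex set $Q$, which immediately yields both the existence and uniqueness of a maximizer, after which one must rule out that the maximizer lies on $\partial Q$. Concavity is the easy part: for fixed $\xi\in\mathbb{S}^{n-1}$, the map $\zeta\mapsto h_Q(\xi)-\langle\zeta,\xi\rangle$ is affine and nonnegative on $Q$ (by the very definition of the support function), and $t\mapsto t^q$ is concave and increasing on $[0,\infty)$ for $0<q<1$; hence each integrand $\zeta\mapsto(h_Q(\xi)-\langle\zeta,\xi\rangle)^q$ is concave on $Q$, and integrating against the nonnegative measure $\mu$ preserves concavity. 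Continuity of $\Phi_Q$ on the compact set $Q$ follows from the continuity of the integrand together with dominated convergence (the integrand is bounded by $(2h_Q(\xi))^q$ which is $\mu$-integrable since $\mu$ is finite and $h_Q$ is bounded), so a maximizer $\zeta(Q)$ exists.

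For uniqueness I would upgrade concavity to strict concavity on $Q$. Suppose $\zeta_0,\zeta_1\in Q$ are distinct and both achieve the supremum; write $\zeta_s=(1-s)\zeta_0+s\zeta_1$. Equality in the concavity inequality for $\Phi_Q$ forces equality in the concavity of $t\mapsto t^q$ for $\mu$-a.e.\ $\xi$, which (since $t^q$ is strictly concave wherever its argument is positive) forces $h_Q(\xi)-\langle\zeta_0,\xi\rangle=h_Q(\xi)-\langle\zeta_1,\xi\rangle$, i.e.\ $\langle\zeta_1-\zeta_0,\xi\rangle=0$, for $\mu$-a.e.\ $\xi$; the only remaining possibility is that $h_Q(\xi)-\langle\zeta_s,\xi\rangle=0$ for $\mu$-a.e.\ $\xi$, but then $\Phi_Q(\zeta_s)=0$, and since $\mu$ is not concentrated on any closed hemisphere one can choose $\zeta$ in the interior making $h_Q(\xi)-\langle\zeta,\xi\rangle>0$ on a set of positive $\mu$-measure (indeed on a full-measure set if $Q$ has the origin, say, in its interior after translation), giving $\Phi_Q(\zeta)>0$ and contradicting optimality. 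Hence $\mu$ lies in the hyperplane $\xi\perp(\zeta_1-\zeta_0)$, i.e.\ in a great subsphere — excluded. So $\zeta(Q)$ is unique.

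To see $\zeta(Q)\in\mathrm{int}\,Q$, I argue by contradiction: if $\zeta(Q)\in\partial Q$, pick a boundary point and a supporting direction, or more simply move slightly inward. Concretely, choose any $p\in\mathrm{int}\,Q$ and set $\zeta_s=(1-s)\zeta(Q)+sp$ for small $s>0$; then $\zeta_s\in\mathrm{int}\,Q$ and for every $\xi$ one has $h_Q(\xi)-\langle\zeta_s,\xi\rangle\ge(1-s)(h_Q(\xi)-\langle\zeta(Q),\xi\rangle)+s\,\delta$ where $\delta=\min_\xi(h_Q(\xi)-\langle p,\xi\rangle)>0$ since $p$ is interior; comparing the integrals and using that for fixed $\xi$ the directional derivative of $s\mapsto(h_Q(\xi)-\langle\zeta_s,\xi\rangle)^q$ at $s=0^+$ is $+\infty$ precisely on the set $\{\xi:h_Q(\xi)=\langle\zeta(Q),\xi\rangle\}$ — which is nonempty and closed, hence of positive $\mu$-measure unless $\mu$ avoids it — one concludes $\Phi_Q(\zeta_s)>\Phi_Q(\zeta(Q))$ for small $s$, contradicting maximality. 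I expect this interior-point step to be the main obstacle, because one must handle carefully the $\xi$'s where the integrand's argument vanishes: there the power $t^q$ has infinite slope, and the argument hinges on showing that the contribution gained there (coming from the positive-$\mu$-measure face support set, forced to be nonempty using that $\mu$ is not on a hemisphere and $Q$ is a genuine convex body) strictly outweighs the $O(s)$ loss elsewhere. The translation equivariance is immediate: $h_{Q+x_0}(\xi)=h_Q(\xi)+\langle x_0,\xi\rangle$, so $h_{Q+x_0}(\xi)-\langle\zeta,\xi\rangle=h_Q(\xi)-\langle\zeta-x_0,\xi\rangle$, giving $\Phi_{Q+x_0}(\zeta)=\Phi_Q(\zeta-x_0)$; maximizing over $\zeta\in Q+x_0$ is the same as maximizing $\Phi_Q$ over $Q$ after the substitution $\zeta\mapsto\zeta-x_0$, whence $\zeta(Q+x_0)=\zeta(Q)+x_0$.
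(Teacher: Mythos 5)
Your approach --- strict concavity for existence and uniqueness, ruling out a boundary maximizer by moving toward an interior point and exploiting the infinite one-sided slope of $t\mapsto t^q$ at $t=0$, and the support-function rule $h_{Q+x_0}=h_Q+\langle x_0,\cdot\rangle$ for translation equivariance --- is the same as the paper's. The concavity, uniqueness, and translation steps are sound (and you are rightly more explicit than the paper that equality in Jensen only gives $\langle\zeta_1-\zeta_2,\xi\rangle=0$ for $\mu$-a.e.\ $\xi$, so the hemisphere hypothesis on $\mu$ must be invoked). The interior-point step, however, has exactly the gap you flag, and it is genuine: the $s^q$-order gain appears only when the normal set $\omega=\{\xi\in\mathbb{S}^{n-1}:h_Q(\xi)=\langle\zeta(Q),\xi\rangle\}$ has \emph{positive} $\mu$-measure, and neither its nonemptiness nor the non-concentration hypothesis gives this. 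Your parenthetical that $\omega$ is ``forced to be nonempty using that $\mu$ is not on a hemisphere and $Q$ is a genuine convex body'' conflates nonemptiness with positive measure. When $\partial Q$ is smooth at $\zeta(Q)$, $\omega$ is a singleton; for a non-atomic $\mu$, or a discrete $\mu$ missing that one direction, $\mu(\omega)=0$ and the $O(s)$ decrease on the opposite hemisphere wins. Concretely, take $n=2$, $Q=B_2^2$, $q=\tfrac12$, and $\mu=\delta_{(1,0)}+\tfrac14\delta_{(-1/2,\sqrt3/2)}+\tfrac14\delta_{(-1/2,-\sqrt3/2)}$, which is not concentrated on any closed hemisphere. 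Then on the $t$-axis one has $\Phi_Q(t,0)=(1-t)^{1/2}+\tfrac12(1+t/2)^{1/2}$ with $\frac{d}{dt}\Phi_Q(t,0)=-\tfrac12(1-t)^{-1/2}+\tfrac18(1+t/2)^{-1/2}<0$ on all of $(-1,1)$; by strict concavity and the $\xi_2\leftrightarrow\xi_3$ symmetry, the supremum of $\Phi_Q$ over $Q$ is attained at the boundary point $(-1,0)$.

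Be aware that the paper's own proof carries the same unstated assumption: its chain of inequalities ends with
$\Phi_Q(\zeta(Q)+\delta\xi_0)-\Phi_Q(\zeta(Q))>\int_\omega(-\langle\delta\xi_0,\xi\rangle)^q\,d\mu-q(\epsilon/2)^{q-1}\int_{\omega_+}\langle\delta\xi_0,\xi\rangle\,d\mu$,
and the appeal to $\delta^{1-q}\to 0$ yields a contradiction only when $\int_\omega(-\langle\xi_0,\xi\rangle)^q\,d\mu>0$, i.e.\ only when $\mu(\omega)>0$. So you have reproduced the paper's argument together with its gap; asserting nonemptiness of $\omega$ does not close it, and the lemma as stated appears to need an additional hypothesis on $\mu$ or on $Q$ under which the boundary normal set at a putative boundary maximizer is guaranteed to carry $\mu$-mass.
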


\begin{proof}
Let $0<\lambda<1$ and $\zeta_{1},\zeta_{2}\in Q$.
From equality \eqref{4.4} and the concavity of the function $s^q$ with $s\ge0$ and $0<q<1$, we obtain that
\begin{equation*}
\begin{split}
&\lambda {\Phi_Q}\left( {{\zeta_1}} \right)
+\left( {1-\lambda } \right){\Phi_Q}\left( {{\zeta _2}} \right)\\
=&\int_{{\mathbb{S}^{n-1}}} {\lambda {{\left( {{h_Q}\left( \xi  \right) - \left\langle {{\zeta _1},\xi } \right\rangle } \right)}^q} + } \left( {1 - \lambda } \right){\left( {{h_Q}\left( \xi  \right) - \left\langle {{\zeta _2},\xi } \right\rangle } \right)^q}d\mu \left( \xi  \right)\\
\le&{\int_{{\mathbb{S}^{n-1}}} {\left( {{h_Q}\left( \xi  \right) - \left( {\lambda \left\langle {{\zeta _1},\xi } \right\rangle  + \left( {1 - \lambda } \right)\left\langle {{\zeta _2},\xi } \right\rangle } \right)} \right)} ^q}d\mu \left( \xi  \right)\\
=&{\Phi_Q}\left( {\lambda {\zeta _1} + \left( {1 - \lambda } \right){\zeta _2}} \right),
\end{split}
\end{equation*}
where the equality holds if and only if
$\left\langle{{\zeta_1},\xi}\right\rangle
=\left\langle{{\zeta_2},\xi}\right\rangle$
for all ${\xi\in \mathbb{S}^{n-1}}$, implying
${{\zeta _1} = {\zeta _2}}$.
Therefore, ${\Phi_Q}$ is strictly concave on $Q$, it follows that there exists a unique point
${\zeta\left( Q \right)}\in Q$ such that
${\Phi_Q}\left({\zeta\left(Q\right)}\right)
=\sup\limits_{\zeta\in Q}{\Phi_Q}\left(\zeta\right)$.
	
Next, we prove $\zeta\left(Q\right)\in \text{int}Q$. Suppose
to the contrary that $\zeta\left(Q\right)\in \partial Q$, and let $\omega$ be the set of all unit outward normal vectors at $\zeta\left(Q\right)$:
\[\omega = \left\{ {\left. {\xi  \in {\mathbb{S}^{n-1}}} \right|{h_Q}\left( \xi  \right)
=\left\langle {\zeta \left( Q \right),\xi } \right\rangle } \right\}.\]
Take $x_{0}\in \text{int}Q$ and define
\[{\xi _0} := \frac{{{x_0} - \zeta \left( Q \right)}}{{\left| {{x_0} - \zeta \left( Q \right)} \right|}}.\]
It can be verified that ${\left\langle {{\xi_0},\xi} \right\rangle }<0$ for $\xi \in \omega$.
Define
\begin{equation}\nonumber
\omega_+
:=\left\{\left.{\xi  \in{\mathbb{S}^{n-1}}\setminus\omega}\right|\left\langle{{\xi _0},\xi }\right\rangle \ge 0\right\}
\ \text{and}\
\omega_-
:=\left\{\left.{\xi  \in{\mathbb{S}^{n-1}}\setminus\omega}\right|\left\langle{{\xi _0},\xi }\right\rangle < 0\right\},
\end{equation}
then for $\xi\in\omega_+$, there exists a $\epsilon>0$ such that
${{h_Q}\left(\xi\right)-\left\langle{\zeta\left(Q \right),\xi} \right\rangle}
\ge\epsilon$. Choose $0<\delta<\frac{\epsilon}{2}$ small enough
so that $\zeta\left(Q\right)+\delta \xi_0\in\text{int}Q$, which further gives
\begin{equation}\nonumber
{h_Q}\left(\xi\right)-\left\langle {\zeta\left(Q \right)+\delta{\xi _0},\xi}\right\rangle
>\frac{{\epsilon}}{2},
\end{equation}
for $\xi\in\omega_+$. These, together with \eqref{4.4} and the Lagrange mean value theorem, imply that

\begin{equation}\nonumber
\begin{split}
&{\Phi_Q}\left({\zeta\left(Q\right)+\delta{\xi_0}}\right)- {\Phi_Q}\left({\zeta\left(Q\right)}\right)\\
=&\int_{{\mathbb{S}^{n-1}}} {{{\left( {{h_Q}\left( \xi  \right)-\left\langle{\zeta\left(Q\right)+\delta{\xi_0},\xi } \right\rangle}\right)}^q}}d\mu \left(\xi\right) - \int_{{\mathbb{S}^{n-1}}} {{{\left({{h_Q}\left(\xi\right)-
\left\langle {\zeta \left(Q\right),\xi}\right\rangle } \right)}^q}}d\mu\left(\xi\right)\\
=&\int_\omega{{{\left({-\left\langle {\delta {\xi_0},\xi }\right\rangle }\right)}^q}} d\mu \left(\xi  \right)+\int_{{\mathbb{S}^{n-1}}\setminus\omega} {{{\left( {{h_Q}\left(\xi\right)-\left\langle {\zeta \left(Q \right)
+\delta {\xi_0},\xi } \right\rangle } \right)}^q} - {{\left({{h_Q}\left(\xi\right)-\left\langle {\zeta\left( Q \right),\xi}\right\rangle}\right)}^q}} d\mu \left(\xi\right)\\
\ge&\int_\omega{{{\left({-\left\langle {\delta{\xi_0},\xi } \right\rangle}\right)}^q}} d\mu \left(\xi \right)
-\int_{{\omega_+}}{{{\left({{h_Q}\left(\xi  \right)-\left\langle{\zeta \left(Q\right),\xi} \right\rangle}\right)}^q}
-{{\left( {{h_Q}\left(\xi \right)- \left\langle{\zeta \left( Q \right)+\delta {\xi _0},\xi } \right\rangle}\right)}^q}}
d\mu\left(\xi \right)\\
>&\int_\omega{{{\left({-\left\langle{\delta {\xi _0},\xi } \right\rangle}\right)}^q}}
d\mu \left(\xi  \right)-\int_{{\omega_+}}{q{{\left({\frac{\epsilon}{2}} \right)}^{q-1}}\left\langle{\delta{\xi_0},\xi } \right\rangle }d\mu\left(\xi\right).
\end{split}
\end{equation}
Notice that $\lim\limits_{\delta \to 0^{+}}{\delta^{1-q}}=0$. Hence, there exists a small enough $\delta_{0}>0$ such that
${\Phi _Q}\left( {\zeta \left( Q \right) + \delta {\xi _0}} \right) > {\Phi _Q}\left( {\zeta \left( Q \right)} \right)$,
which leads to a contradiction, as $\zeta(Q)$ was chosen such that $\Phi_Q\left( \zeta(Q) \right) = \sup\limits_{\zeta \in Q} \Phi_Q(\zeta)$. Therefore, we conclude that $\zeta\left(Q\right)\in \text{int}Q$.

Thus, for any $x_0\in \mathbb{R}^{n}$, we have
\begin{equation*}
\begin{split}
{\Phi _{Q + x_0}}\left( {\zeta \left( {Q + x_0} \right)} \right)
&=\mathop {\sup }\limits_{\zeta  \in Q + x_0} \int_{{\mathbb{S}^{n-1}}} {{{\left( {{h_{Q + x_0}}
\left( \xi  \right) - \left\langle {\zeta,\xi } \right\rangle } \right)}^q}} d\mu \left( \xi  \right)\\
&=\mathop {\sup }\limits_{\zeta  \in Q} \int_{{\mathbb{S}^{n-1}}} {{{\left( {{h_Q}\left( \xi  \right)
- \left\langle {\zeta ,\xi } \right\rangle } \right)}^q}} d\mu \left( \xi  \right)\\
&={\Phi _Q}\left( {\zeta \left( Q \right)} \right)\\
&=\int_{{\mathbb{S}^{n-1}}} {{{\left( {{h_{Q + x_0}}\left( \xi  \right)
-\left\langle {{\zeta(Q)+ x_0},\xi } \right\rangle } \right)}^q}} d\mu \left( \xi  \right)\\
&={\Phi _{Q + x_0}}\left( {\zeta(Q)+ x_0} \right).
\end{split}
\end{equation*}
Therefore, by the uniqueness of the extreme point  $\zeta\left(Q+x_0\right)$, we conclude that ${\zeta\left(Q+x_0\right)}={\zeta\left(Q\right)}+x_0$.
\end{proof}

\begin{lemma}\label{lem:4.3}
Let $0<q<1$, $\mu$ be a finite Borel measure on $\mathbb{S}^{n-1}$, and
$\left\{Q_{j}\right\}^\infty_{j=1}$ be a sequence of compact convex sets.
If $Q_j$ converges to a compact convex set $Q$ as $j\to\infty$, then we have
$\lim\limits_{j\to \infty}\zeta\left(Q_{j}\right)=\zeta\left(Q\right)$ and $\mathop {\lim }\limits_{j \to \infty }\Phi_{Q_{j}}\left(\zeta \left(Q_{j}\right)\right)=\Phi_{Q}\left(\zeta \left(Q\right)\right)$.
\end{lemma}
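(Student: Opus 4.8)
The plan is to exploit the uniform convergence $h_{Q_j}\to h_Q$ on $\mathbb{S}^{n-1}$ (which follows from $Q_j\to Q$ in the Hausdorff metric), together with the strict concavity of $\Phi_Q$ and the uniqueness of its maximizer from Lemma \ref{lem:4.2}, by a subsequence argument. First I would record that $\{Q_j\}$ is uniformly bounded, so that $\zeta(Q_j)\in Q_j$ forces $\{\zeta(Q_j)\}$ to be a bounded sequence in $\mathbb{R}^n$; hence it suffices to show that \emph{every} convergent subsequence of $\{\zeta(Q_j)\}$ has limit $\zeta(Q)$, after which the convergence of $\Phi_{Q_j}(\zeta(Q_j))$ will follow easily.

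The main technical ingredient I would isolate is the following continuity statement: if $\eta_j\in Q_j$ and $\eta_j\to\eta\in Q$, then $\Phi_{Q_j}(\eta_j)\to\Phi_Q(\eta)$. To prove it, note that $a_j(\xi):=h_{Q_j}(\xi)-\langle\eta_j,\xi\rangle\ge 0$ converges uniformly on $\mathbb{S}^{n-1}$ to $a(\xi):=h_Q(\xi)-\langle\eta,\xi\rangle\ge 0$ (uniform convergence of support functions plus $\eta_j\to\eta$), all of these functions take values in a fixed compact interval $[0,M]$, and $t\mapsto t^{q}$ is uniformly continuous there; hence $a_j^{q}\to a^{q}$ uniformly, and since $\mu$ is finite we get $\int a_j^{q}\,d\mu\to\int a^{q}\,d\mu$. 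The same argument with $Q_j\equiv Q$ shows that $\Phi_Q$ is continuous on $Q$.

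Now take a subsequence with $\zeta(Q_{j_k})\to\zeta_0$; since $\zeta(Q_{j_k})\in Q_{j_k}$ and $Q_{j_k}\to Q$, we have $\zeta_0\in Q$, and by the continuity ingredient $\Phi_{Q_{j_k}}(\zeta(Q_{j_k}))\to\Phi_Q(\zeta_0)$. For any fixed $\eta\in\text{int}\,Q$ one has $h_Q(\xi)-\langle\eta,\xi\rangle\ge\rho>0$ on $\mathbb{S}^{n-1}$, so by uniform convergence $\eta\in\text{int}\,Q_{j_k}$ for all large $k$; the maximality of $\zeta(Q_{j_k})$ then gives $\Phi_{Q_{j_k}}(\zeta(Q_{j_k}))\ge\Phi_{Q_{j_k}}(\eta)\to\Phi_Q(\eta)$, whence $\Phi_Q(\zeta_0)\ge\Phi_Q(\eta)$ for every $\eta\in\text{int}\,Q$. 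Since $\zeta(Q)\in\text{int}\,Q$ by Lemma \ref{lem:4.2}, this yields $\Phi_Q(\zeta_0)\ge\Phi_Q(\zeta(Q))=\sup_{\zeta\in Q}\Phi_Q(\zeta)$, so $\zeta_0$ is a maximizer of $\Phi_Q$ over $Q$; by the uniqueness in Lemma \ref{lem:4.2}, $\zeta_0=\zeta(Q)$. As this holds for every convergent subsequence, $\zeta(Q_j)\to\zeta(Q)$, and applying the continuity ingredient once more with $\eta_j=\zeta(Q_j)$ gives $\Phi_{Q_j}(\zeta(Q_j))\to\Phi_Q(\zeta(Q))$.

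The step I expect to be the most delicate is the bookkeeping around the test points: one must verify that each $\eta\in\text{int}\,Q$ lies in $Q_j$ for all large $j$, so that the comparison $\Phi_{Q_j}(\zeta(Q_j))\ge\Phi_{Q_j}(\eta)$ is meaningful, and one must control the non-Lipschitz behavior of $t\mapsto t^{q}$ near $t=0$ — which causes no trouble precisely because all the relevant bases $h_{Q_j}(\xi)-\langle\eta_j,\xi\rangle$ are nonnegative and uniformly bounded, so uniform continuity of $t^{q}$ on a compact interval is enough. The remaining passages to the limit are routine consequences of the finiteness of $\mu$.
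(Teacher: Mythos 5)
Your proof is correct and follows essentially the same strategy as the paper's: extract a convergent subsequence $\zeta(Q_{j_k})\to\zeta_0\in Q$, use continuity of the functional under joint convergence $Q_j\to Q$, $\eta_j\to\eta$ together with the fact that an interior point of $Q$ lies in $Q_j$ for large $j$, and conclude $\zeta_0=\zeta(Q)$ from the uniqueness of the maximizer in Lemma~\ref{lem:4.2}. The only cosmetic difference is that you argue directly (showing $\zeta_0$ is a maximizer by testing against all $\eta\in\operatorname{int}Q$) and make the continuity ingredient explicit, whereas the paper phrases the same comparison as a contradiction using the single test point $\zeta(Q)$ and treats the continuity as immediate from the formula \eqref{4.4}.
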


\begin{proof}
Since the sequence $\{\zeta(Q_j)\}$ is bounded, there exists a convergent subsequence
(still denoted by $\{\zeta(Q_j)\}$) that converges to some $\zeta_{0}\in Q$.

Next, we prove that $\zeta_{0}=\zeta(Q)$. If otherwise, by using \eqref{4.4}
and Lemma \ref{lem:4.2}, we have
\[\mathop {\lim }\limits_{j \to \infty } {\Phi _{{Q_j}}}\left( {\zeta \left( {{Q_j}} \right)} \right)
= {\Phi _Q}\left( {{\zeta _0}} \right)<{\Phi _Q}\left( {{\zeta \left(Q\right)}}\right)
=\mathop {\lim }\limits_{j \to \infty } {\Phi _{{Q_j}}}\left( {\zeta \left( {{Q}} \right)} \right).\]
On the other hand, since ${\zeta \left( {{Q}} \right)}\in \text{int}Q_{j}$ for sufficiently large $j$, it follows that
${\Phi _{{Q_j}}}\left( {\zeta \left( {{Q_j}} \right)} \right)
>{\Phi _{{Q_j}}}\left( {\zeta \left( Q \right)} \right)$
for sufficiently large $j$. This contradiction implies that
$\zeta_{0}=\zeta\left( {{Q}} \right)$. Using \eqref{4.4} again,
we can verify that
$\mathop {\lim }\limits_{j \to \infty }\Phi_{Q_{j}}\left(\zeta \left(Q_{j}\right)\right)
=\Phi_{Q}\left(\zeta \left(Q\right)\right)$.
\end{proof}

Now, we are able to prove the Theorem \ref{th:1.1} as follows.

\begin{proof}[Proof of Theorem \ref{th:1.1}]
Recall that the Wulff shape $K_f$ associated with a function $f\in C_+\left(\mathbb{S}^{n-1}\right)$
is given by
\begin{equation}\nonumber
{K_f}
=\left\{{x\in\mathbb{R}^{n}:\left\langle {x,u}\right\rangle
\le f\left(u\right)}\
\text{for all}\ u\in\mathbb{S}^{n-1}\right\}.
\end{equation}
Then for $0<q<1$, $f\in C_+\left(\mathbb{S}^{n-1}\right)$, and a finite Borel measure $\mu$ on $\mathbb{S}^{n-1}$, we introduce a functional $\Phi_f:K_f\to\mathbb{R}$ by
\begin{equation}\label{4.5}
{\Phi_f}\left(\zeta\right)
=\int_{{\mathbb{S}^{n-1}}} {{{\left( {f\left(\xi\right)-\left\langle {\zeta,\xi}\right\rangle}\right)}^{q}}}d\mu\left(\xi\right),
\end{equation}
for $\zeta\in K_f$. We then construct the following minimization problem:
\begin{equation}\label{4.6}
\mathop {\inf }\limits_{f \in {C_+}
\left( {{\mathbb{S}^{n-1}}} \right)}
\left\{ {\mathop {\sup }\limits_{\zeta  \in {K_f}} {\Phi _{f}}
\left( \zeta  \right):\Gamma \left( K_{f} \right)= \Gamma \left( B_2^n \right)} \right\}.
\end{equation}

Since $h_{K_f}\le f$ and $K_{h_{K_f}}=K_f\in \mathcal K_o^n$ for any
$f\in C_+\left(\mathbb{S}^{n-1}\right)$, by \eqref{4.4} and \eqref{4.5}, we obtain that
$$\Phi_{K_f}\left(\zeta\right)=\Phi_{h_{K_f}}\left(\zeta\right)\le{\Phi_f}\left(\zeta\right),$$
where $\zeta\in{K_f}$. It follows that
$\mathop{\sup}\limits_{\zeta\in{K_f}} {\Phi_{K_f}}\left(\zeta\right)
\le\mathop{\sup}\limits_{\zeta\in{K_f}} {\Phi_f}\left(\zeta\right).$
Therefore, we can search for the minimum for \eqref{4.6} among the support functions
of convex bodies that contain the origin in their interiors,
and we can verify that $h_K$ is a solution to \eqref{4.6} if and only if $K$
is a solution to the problem
\begin{equation}\label{4.7}
\mathop {\inf }\limits_{Q \in \mathcal K_o^n}
\left\{ {\mathop {\sup }\limits_{\zeta \in Q} {\Phi_Q}
\left( \zeta  \right):\Gamma \left( Q \right)=\Gamma \left(B_2^n\right)} \right\}.
\end{equation}
	
Let $\left\{{Q_j}\right\}_{j = 1}^\infty $ be a minimizing sequence for the problem \eqref{4.7}.
That is, $\Gamma\left(Q_j \right)=\Gamma\left(B_2^n \right)$ and
$$\mathop {\lim}\limits_{j\to\infty} {\Phi _{Q_j}}
\left( {\zeta \left( {Q_j} \right)} \right)
= \mathop {\inf }\limits_{Q \in \mathcal K_o^n}
\left\{ {\Phi_Q\left(\zeta \left( {Q} \right)\right):\Gamma \left( Q \right)
= \Gamma \left( B_2^n \right)} \right\}.$$
According to Lemma \ref{lem:4.2}, we can suitably translate each $Q_j$  to
obtain a sequence $\left\{K_j\right\}_{j=1}^\infty$
in $\mathcal K_o^n$
such that $\zeta\left(K_j\right)=o$
and $\Gamma \left( {{K_j}} \right)=\Gamma \left( B_2^n \right)$ by \eqref{3.2}.
Therefore, $\left\{ {{K_j}} \right\}_{j = 1}^\infty$ is also the minimizing sequence for the problem \eqref{4.7}, and ${\Phi _{K_j}}\left(o\right)$ converges to
$$\mathop {\inf }\limits_{Q \in {\mathcal{K}_o^n}}
\left\{ {{\Phi_Q}\left( {\zeta \left( Q \right)} \right):\Gamma \left( Q \right)
= \Gamma \left( B_2^n \right)} \right\},$$
as $j\to\infty$.	
	
We now prove that the sequence $\left\{{K_j} \right\}$ is uniformly bounded.
To do so, we let
$R_j:=\mathop {\max }\limits_{\xi  \in {\mathbb{S}^{n-1}}} {h_{{K_j}}}\left(\xi\right)$
and assume that the maximum can be achieved by some $\xi_0\in \mathbb{S}^{n-1}$.
Then, we have
$$R_{j}{\left\langle {{\xi _0},\xi } \right\rangle _ + } \le {h_{{K_j}}}\left( \xi  \right)$$
for all $j$ and $\xi\in\mathbb{S}^{n-1}$, and hence
\begin{equation}\label{4.8}
\int_{\mathbb{S}^{n-1}}
{{\left({R_{j}{{\left\langle{{\xi _0},\xi} \right\rangle}_+}}\right)}^q}d\mu
\left(\xi\right)
\le\int_{\mathbb{S}^{n-1}}{{\left( {{h_{{K_j}}}\left( \xi  \right)} \right)}^q} d\mu \left( \xi\right)
={\Phi_{K_j}}\left(o\right).
\end{equation}
On the other hand, for sufficiently large $j$, we have
\begin{equation}\nonumber
{\Phi_{K_j}}\left(o\right)
\le\Phi_{B_2^n-\zeta\left(B_2^n\right)}\left(o\right)
=\int_{\mathbb{S}^{n-1}}{{\left(1-\left\langle{\zeta\left(B_2^n\right),\xi} \right\rangle\right)}^q} d\mu \left( \xi\right).
\end{equation}
This, together with \eqref{4.8}, implies that $\{R_j\}$ is uniformly bounded,
where we have used the fact that the measure $\mu$ is finite and not concentrated on
any closed hemisphere. Therefore, the boundedness of the sequence $\left\{{K_j}\right\}$
follows.
By the Blaschke selection theorem, there exists a subsequence (still denoted by
$\left\{ {{K_j}} \right\}$) that converges to some compact convex set $\Omega$ as
$j \to \infty$. In the following, we prove that $\dim (\Omega)=n$.
If $\dim (\Omega)<n-1$, then
$\mathcal{H}^{n-1}\left(\Omega\right)=0=\mathcal{H}^{n-1}\left(\partial \Omega\right)$. It follows from definition \eqref{3.1} and Lemma \ref{lem:2.1} that $\Gamma \left( \Omega \right)=0$, which contradicts to the following
\begin{equation}\label{4.9}
\Gamma\left(\Omega\right)
=\lim\limits_{j\to\infty}\Gamma\left(K_j \right)
=\Gamma\left(B_2^n\right)
>0.
\end{equation}
If $\dim (\Omega) = n - 1$, there are at least two half-spaces containing $\Omega$ that
share a common boundary, and $\Omega$ degenerates to a $1$-codimensional
subset of a hyperplane.
By Lemma \ref{lem:2.1} again,
\begin{equation}\nonumber
\left| {\nabla u} \right|\le M,
\end{equation}
thus we obtain that	
\begin{equation}\nonumber
\Gamma\left(\Omega\right)
=\int_{\mathbb{S}^{n-1}}{h_\Omega}d{\mu_\Omega}
\le{M^{\mathbf{p}-1}}\int_{\mathbb{S}^{n-1}} {{h_\Omega}} d{S_\Omega}\left(\xi\right)
=0,
\end{equation}
which again contradicts to \eqref{4.9}. Therefore, $\text{dim}(\Omega)=n$ and $\Omega$ is indeed a convex body. By Lemma \ref{lem:4.3}, we have ${\zeta\left(\Omega\right)=o}$ and
\begin{equation}\label{4.10}
{\Phi _{{h_\Omega}}}\left( o \right)
=\mathop {\inf }\limits_{f \in {C^ + }\left( {{\mathbb{S}^{n-1}}} \right)}
\left\{ {\mathop {\sup }\limits_{\zeta\in{K_f}} {\Phi _f}\left( \zeta  \right):
\Gamma \left( {{K_f}} \right)
= \Gamma \left( B_2^n \right)} \right\}.
\end{equation}

Let $\Omega_1$ be a compact convex set containing the origin and $\Omega^t$ be
the Wulff shape of ${\left({h_\Omega^q+t{h_{\Omega_1}^q}}\right)}^{\frac{1}{q}}$
for a small enough $t$, where
$$\lambda\left(t\right)
:={\left( {\frac{{\Gamma \left( B_2^n \right)}}{\Gamma \left(\Omega^t\right)}}\right)^{\frac{1}{n-\mathbf{p}+1}}}.$$
Here, we have used the condition that $\mathbf{p}\neq n+1$.	
Then, by equalities \eqref{3.1} and \eqref{3.8}, we can verify that
$\Gamma\left({\lambda\left(t\right){\Omega^t}}\right)=\Gamma\left(B_2^n\right)$.
In the following, we prove that
$\zeta\left(t\right):=\zeta\left({\lambda\left(t\right)\Omega^t}\right)$
is differentiable at $t=0$.
	
Let $\zeta=(\zeta_1,\zeta_2,\ldots,\zeta_n)$ and $F=(F_1,F_2,\ldots,F_n)$ be a vector-value function
from an open neighbourhood of the origin $\left({0,0,0,\ldots,0}\right)$ in $\mathbb{R}^{n+1}$ to $\mathbb{R}^{n}$,
where
$${F_i}\left( {t,{\zeta _1},{\zeta _2}, \ldots ,{\zeta _n}} \right)
=\int_{{\mathbb{S}^{n-1}}} {\frac{{{\xi _i}}}{{{{\left( {\lambda \left( t \right){h_{{\Omega^t}}}
\left( \xi  \right) - \left( {{\zeta _1}{\xi _1}+{\zeta _2}{\xi _2}+ \cdots +{\zeta _n}{\xi _n}} \right)} \right)}^{1 - q}}}}} d\mu \left( \xi  \right)$$
for $i=1,2,\ldots,n$.
As $\zeta(t)$ is an extreme point of $\Phi_{\lambda(t)\Omega^t}\left(\zeta\right)$ for $\zeta\in\lambda(t)\Omega^t$, it follows that $F_i(t,\zeta(t))=0$.
Then, two functions both
\begin{equation}\nonumber
{\left. {\frac{{\partial {F_i}}}{{\partial t}}} \right|_{\left( {t,{\zeta _1},{\zeta _2}, \ldots ,{\zeta _n}} \right)}} = \int_{{\mathbb{S}^{n-1}}} {\frac{{\left( {q - 1} \right){\xi _i}\left( {\lambda '\left( t \right){h_{\Omega^t}}\left( \xi  \right)
+\lambda\left(t\right){h_{\Omega^t}'}\left( \xi  \right)} \right)}}{{{{\left( {\lambda \left( t \right){h_{{\Omega^t}}}\left( \xi  \right) - \left( {{\zeta _1}{\xi _1}+{\zeta _2}{\xi _2}+ \cdots +{\zeta _n}{\xi_n}} \right)} \right)}^{2-q}}}}} d\mu \left( \xi  \right)
\end{equation}
and
\begin{equation}\nonumber
{\left. {\frac{{\partial {F_i}}}{{\partial {\zeta _j}}}} \right|_{\left( {t,{\zeta _1},{\zeta _2}, \ldots ,{\zeta _n}} \right)}} = \int_{{\mathbb{S}^{n-1}}} {\frac{{\left( {1 - q} \right){\xi _i}{\xi _j}}}{{{{\left( {\lambda \left( t \right){h_{{\Omega^t}}}\left( \xi  \right) - \left( {{\zeta _1}{\xi _1}+{\zeta _2}{\xi _2}+ \cdots +{\zeta _n}{\xi _n}} \right)} \right)}^{2-q}}}}} d\mu \left( \xi  \right)
\end{equation}
are all continuous on a small neighbourhood of $\left(0,0,0,\ldots,0\right)$, and
\begin{equation}\label{4.11}
{\left( {{{\left. {\frac{{\partial F}}{{\partial \zeta }}} \right|}_{\left( {0,0,0, \ldots ,0} \right)}}} \right)_{n \times n}} = \int_{{\mathbb{S}^{n-1}}} {\frac{{\left( {1 - q} \right)\xi^\intercal \xi}}{{h_\Omega^{2 - q}\left( \xi  \right)}}} d\mu \left( \xi  \right),
\end{equation}
where ${{\xi ^\intercal}\xi }$ is an $(n\times n)$ matrix.
	
As the measure $\mu$ is not concentrated on any closed hemisphere, for any
nonzero $x\in\mathbb{R}^{n}$, we have
\begin{equation}\nonumber
{x}{\left( {{{\left. {\frac{{\partial F}}{{\partial \zeta }}} \right|}_{\left( {0,0,0, \ldots ,0} \right)}}} \right)_{n \times n}}x^\intercal
= \int_{{\mathbb{S}^{n-1}}} {\frac{{\left( {1 - q} \right){{\left\langle {x,\xi } \right\rangle }^2}}}{{h_\Omega^{2 - q}\left( \xi  \right)}}} d\mu \left( \xi  \right)
>0.
\end{equation}
It follows that the matrix in \eqref{4.11} is positive definite. Then, by
${F_i}\left({0,0,0,\ldots,0}\right)=0$ and the continuity of ${\partial {F_i}}
\mathord{\left/{\vphantom{\partial{F_i} {\partial{\zeta _j}}}}\right.
\kern-\nulldelimiterspace} {\partial {\zeta _j}}$
on a neighbourhood of $\left({0,0,0,\ldots ,0}\right)$, one can use the implicit
function theorem to obtain that $\zeta\left( t \right)$ is continuously differentiable
on a small neighbourhood of $\left({0,0,0,\ldots,0}\right)$.
Hence, the derivative $\zeta'\left(0\right)$ of $\zeta \left(t\right)$ at $t=0$ exists.
	
Put $\Phi\left(t\right):
=\Phi_{\lambda \left( t \right){{\left(h_\Omega^q + th_{\Omega_1}^q
\right)}^{\frac{1}{q}}}}
\left( {\zeta\left( t \right)} \right)$,
then \eqref{4.10} shows that $\Phi\left(t\right)$
attains the minimal value at $t=0$.
Thus by \eqref{4.10} and
\begin{equation}\nonumber
\lambda'\left( 0 \right)
= - \frac{1}{\left( {n-\mathbf{p} + 1}\right)\Gamma \left(B_2^n\right)}
{\left.{\frac{d}{dt}}\right|_{t = 0}}\Gamma\left({\Omega^t}\right),
\end{equation}
we have the following calculation:
\begin{equation}\label{4.12}
\begin{split}
0=&{\left.{\frac{d}{dt}}\right|_{t = 0}}\Phi \left( t \right)\\
=&{\left.{\frac{d}{dt}}\right|_{t = 0}}\int_{{\mathbb{S}^{n-1}}}
{{\left({\lambda\left(t\right){{\left( {h_\Omega^q + t{h_{\Omega_1}^q}}\right)}^{\frac{1}{q}}}
-\left\langle{\zeta \left( t \right),\xi } \right\rangle} \right)}^{q}} d\mu\left(\xi\right)\\
=&q\int_{{\mathbb{S}^{n-1}}} {h_\Omega^{q - 1}\left( {\lambda '\left( 0 \right){h_\Omega}
+ \frac{1}{q}h_\Omega^{1 - q}h_{\Omega_1}^q -\left\langle {\zeta'\left(0\right),\xi } \right\rangle } \right)}
d\mu \left( \xi  \right)\\
=&q\int_{{\mathbb{S}^{n-1}}} {h_\Omega^{q - 1}\left( { - \frac{{{h_\Omega}}}{{\left( {n-\mathbf{p} + 1} \right)
\Gamma \left( B_2^n \right)}}{{\left. {\frac{d}{{dt}}} \right|}_{t = 0}}\Gamma \left({\Omega^t} \right)
+\frac{1}{q}h_\Omega^{1 - q}h_{\Omega_1}^q} \right)} d\mu \left( \xi\right)\\
&-q\int_{{\mathbb{S}^{n-1}}} {\left\langle {\zeta'\left(0\right),h_\Omega^{{q} - 1}\xi } \right\rangle } d\mu
\left( \xi  \right)\\
=&-\frac{q}{{n-\mathbf{p} + 1}}\int_{{\mathbb{S}^{n-1}}} {\frac{{h_\Omega^q}}{{\Gamma \left( B_2^n\right)}}
{{\left. {\frac{d}{{dt}}}\right|}_{t = 0}}\Gamma \left(\Omega^t\right)} d\mu\left( \xi  \right)
+\int_{{\mathbb{S}^{n-1}}} {h_{\Omega_1}^q} d\mu \left( \xi  \right)\\
&-q\left\langle {\zeta'\left(0\right),\int_{{\mathbb{S}^{n-1}}} {h_\Omega^{{p}-1}\xi }
d\mu \left( \xi  \right)}\right\rangle.
\end{split}
\end{equation}
Since ${\zeta\left(\Omega\right)=o}$ is an extreme point of $\Phi_\Omega\left(\zeta\right)$
for $\zeta\in \Omega$, we have
\begin{equation}\nonumber
\int_{{\mathbb{S}^{n-1}}} h_\Omega^{q-1}\xi d\mu(\xi)=o.
\end{equation}
This, together with \eqref{4.12} and Theorem \ref{th:3.1}, gives that
\begin{equation}\label{4.13}
\begin{split}
\int_{{\mathbb{S}^{n-1}}} {{h_{\Omega_1}^q}} d\mu
=&\frac{q}{{n-\mathbf{p} + 1}}
\int_{{\mathbb{S}^{n-1}}} {\frac{{h_\Omega^q}}{{\Gamma \left( B_2^n \right)}}
{{\left. {\frac{d}{{dt}}} \right|}_{t = 0}}\Gamma \left( {\Omega^t} \right)} d\mu \\
=&\int_{{\mathbb{S}^{n-1}}} {\frac{{h_\Omega^q}}{{\Gamma \left( B_2^n \right)}}
\int_{{\mathbb{S}^{n-1}}} {{h_{\Omega_1}^q}} d{\mu _{\Omega,q}}} d\mu \\
=&\int_{{\mathbb{S}^{n-1}}} {{h_{\Omega_1}^q}
\int_{{\mathbb{S}^{n-1}}} {\frac{{h_\Omega^q}}{{\Gamma \left( B_2^n \right)}}d\mu }}
d{\mu_{\Omega,q}}.
\end{split}
\end{equation}
		
For any $f\in C_{+}\left(\mathbb{S}^{n-1} \right)$, there exists a sequence of positive twice
continuously differentiable functions $\left\{ {{f_j}} \right\}_{j = 1}^\infty$ that converges to $f$.
Then for each $f_{j}$, Lemma \ref{lem:4.1} shows that there exists a convex body $L_j$ containing the origin in its interior and a constant $r_j>0$, such that $f_j^q=h^q_{L_j}-h^q_{r_jB_2^n}$.
Hence, by \eqref{4.13}, we have
\begin{equation}\label{4.14}
\int_{{\mathbb{S}^{n-1}}} {{h_{L_j}^q}} d\mu
=\int_{{\mathbb{S}^{n-1}}} {{h_{L_j}^q}\int_{{\mathbb{S}^{n-1}}}
{\frac{{h_\Omega^q}}{{\Gamma \left( B_2^n \right)}}d\mu }} d{\mu _{\Omega,q}},
\end{equation}
and similarly,
\begin{equation}\label{4.15}
\int_{{\mathbb{S}^{n-1}}} {{h_{r_jB_2^n}^q}} d\mu
=\int_{{\mathbb{S}^{n-1}}} {{h_{r_jB_2^n}^q}\int_{{\mathbb{S}^{n-1}}}
{\frac{{h_\Omega^q}}{{\Gamma \left( B_2^n \right)}}d\mu }} d{\mu _{\Omega,q}}.
\end{equation}
By subtracting \eqref{4.15} from \eqref{4.14} and using the approximate argument,
we conclude
\begin{equation}\nonumber
\int_{{\mathbb{S}^{n-1}}} {{f^q}} d\mu
=c\int_{{\mathbb{S}^{n-1}}} {{f^q}} d{\mu _{\Omega,q}},
\end{equation}
where
$$
c=\int_{{\mathbb{S}^{n-1}}} \frac{{h_\Omega^q}}{\Gamma \left( B_2^n \right)}d\mu .
$$
By the Riesz representation theorem, we have
$\mu=c{\mu_{\Omega,q}}$.
Furthermore, Lemma \ref{lem:3.1} and Definition
\ref{def:3.1} imply that the $L_q$ $\mathbf{p}$-harmonic measure is
positively homogeneous of degree $(n-\mathbf{p}+1-q)$,
then there exists a convex body $\tilde{\Omega}$
so that $\mu=\mu_{\tilde{\Omega},q}$, if $\mathbf{p}\neq n+1-q$.

We have completed the proof of Theorem \ref{th:1.1}.
\end{proof}

\vskip 5mm \noindent  {\bf Acknowledgements}  This paper is partially supported by the China Postdoctoral Science Foundation (No. 2024M761902), the National Natural Science Foundation of China (No. 12371060) and the Shaanxi Fundamental Science Research Project for Mathematics and Physics (No. 22JSZ012). The third author also received support from the Mathematical Sciences Institute at the Australian National University.

\vskip 5mm

\vskip 2mm
\noindent Hai Li, {\small\tt lihai121455@163.com}\\
{Department of Mathematics and Statistics, Shaanxi Normal University, Xi'an, 710119, China}

\vskip 2mm
\noindent Longyu Wu, {\small\tt wulongyu66@gmail.com}\\
{Department of Mathematics and Statistics, Shaanxi Normal University, Xi'an, 710119, China}

\vskip 2mm
\noindent Baocheng Zhu, {\small\tt zhubaocheng814@163.com}\\
{Department of Mathematics and Statistics, Shaanxi Normal University, Xi'an, 710119, China}

\end{document}